\numberwithin{equation}{section}
\DeclareSymbolFont{usualmathcal}{OMS}{cmsy}{m}{n}
\DeclareSymbolFontAlphabet{\mathucal}{usualmathcal}
\newtheoremstyle{myplain} {10pt}
{10pt}
{\itshape}
{}
{\bfseries}
{.}
{.5em}
{}
\theoremstyle{myplain}
\newtheorem{theorem}{Theorem}[section]
\newtheorem*{theorem*}{Theorem}
\newtheorem{lemma}[theorem]{Lemma}
\newtheorem{prop}[theorem]{Proposition}
\newtheorem{corollary}[theorem]{Corollary}
\newtheoremstyle{mydefinition} {10pt}
{10pt}
{\itshape}
{}
{\bfseries}
{.}
{.5em}
{}
\theoremstyle{mydefinition}
\newtheorem{definition}[theorem]{Definition}
\newtheoremstyle{myexample} {6pt}
{6pt}
{}
{}
{\scshape}
{.}
{.5em}
{}
\theoremstyle{myexample}
\newtheoremstyle{myremark} {6pt}
{6pt}
{}
{}
{\scshape}
{.}
{.5em}
{}
\theoremstyle{myremark}
\newtheorem{remark}[theorem]{Remark}
\newcommand{\N}{\mathbb{N}}
\newcommand{\Z}{\mathbb{Z}}
\newcommand{\C}{\mathbb{C}}
\newcommand{\gl}{\mathfrak{gl}}
\newcommand{\catO}{\mathcal{O}}
\newcommand{\suchthat}{\,|\,} 
\newcommand{\mapto}{\rightarrow}
\newcommand{\into}{\hookrightarrow}
\DeclareMathOperator{\Hom}{Hom} 
\DeclareMathOperator{\End}{End}
\DeclareMathOperator{\id}{id}
\newcommand{\abs}[1]{\left|#1\right|}
\renewcommand{\epsilon}{\varepsilon}
\renewcommand{\phi}{\varphi}
\newcommand{\calC}{{\mathcal{C}}}
\newcommand{\calY}{{\mathcal{Y}}}
\newcommand{\sfG}{{\mathsf{G}}}
\newcommand{\sfs}{{\mathsf{s}}}
\newcommand{\frakg}{{\mathfrak{g}}}
\newcommand{\frakh}{{\mathfrak{h}}}
\newcommand{\frakn}{{\mathfrak{n}}}
\newcommand{\frakp}{{\mathfrak{p}}}
\newcommand{\bbS}{{\mathbb{S}}}
\newcommand{\rmB}{{\mathrm{B}}}
\newcommand{\ucalH}{{\mathucal H}}
\newcommand{\Mor}{\mathcal{M}\hspace{-0.1ex}\mathrm{or}}
\newcommand{\Ob}{\mathcal{O}\hspace{-0.1ex}\mathrm{b}}
\newcommand{\acts}{\mathop{\;\raisebox{1.7ex}{\rotatebox{-90}{$\circlearrowright$}}}\;}
\newcommand{\actsb}{\mathop{\;\raisebox{0ex}{\rotatebox{90}{$\circlearrowleft$}}}\;}
\newcommand{\bomega}{{\boldsymbol{\omega}}}
\newcommand{\VB}{\mathsf{V\hspace{-5.5pt}VB}}
\newcommand{\Wenzl}{\mathsf{V\hspace{-5.5pt}V}}
\newcommand{\Br}{{\mathrm{Br}}}
\newcommand{\uBr}{\underline\Br}
\newcommand{\uVB}{\underline\VB}
\newcommand{\udelta}{{\underline \delta}}
\newcommand{\hats}{{\hat s}}
\newcommand{\hate}{{\hat e}}
\newcommand{\Seq}{{\mathrm{Seq}}}
\title{The degenerate affine walled Brauer algebra}
\author{Antonio Sartori}
\address{Mathematisches Institut\\Endenicher Allee 60\\Universit\"at Bonn\\53115 Bonn, Germany}
\email{sartori@math.uni-bonn.de}
\urladdr{http://www.math.uni-bonn.de/people/sartori}
\date{\today}
\keywords{Walled Brauer algebra, Degenerate affine Hecke algebra, Category $\catO$, Schur-Weyl duality.}
\thanks{This work has been supported by the Graduiertenkolleg 1150, funded by the Deutsche Forschungsgemeinschaft.}
\begin{document}

\begin{abstract}
  We define a degenerate affine version of the walled Brauer algebra, that has the same role plaid by the degenerate affine Hecke algebra for the symmetric group algebra. We use it to prove a higher level mixed Schur-Weyl duality for $\gl_N$. We consider then families of cyclotomic quotients of level two which appear naturally in Lie theory and we prove that they inherit from there a natural grading and a graded cellular structure.
\end{abstract}

\maketitle
\tableofcontents

\section{Introduction}
\label{sec:introduction}

The classical Schur-Weyl duality \cite{MR0000255} is a basic but very important result in representation theory that connects the general Lie algebra $\gl_N$ with the symmetric group $\bbS_r$. If $V \cong \C^N$ is the vector representation of $\gl_N$, Schur-Weyl duality states that the natural action of $\bbS_r$ on $V^{\otimes r}$ is $\gl_N$-equivariant, and hence induces a map
\begin{equation}\label{eq:245}
  \Phi: \C[\bbS_r] \longrightarrow \End_{\gl_N}(V^{\otimes r}),
\end{equation}
which is always surjective, and is injective if and only if $r\geq N$.

Many generalizations and variations of Schur-Weyl duality, in which one replaces for example the Lie algebra $\gl_N$ with some other reductive Lie algebra and/or the representations $V^{\otimes r}$ with other representations, have been studied in the past years. We would like to recall three of them (as a general reference, see \cite[\textsection{}4.2]{MR2522486}).

Firstly, there is a version connecting the semisimple Lie algebra $\frakg$ of type $B$, $D$ (or $C$) and its vector representation $V$ with the Brauer  algebra $\rmB_{r}(N)$ (or $\rmB_r(-N)$) \cite{MR1503378}:
\begin{equation}\label{eq:248}
  \frakg \acts  V^{\otimes r} \actsb \rmB_{r}(\pm N).
\end{equation}

Secondly, there is a mixed version of \eqref{eq:245} in which one considers mixed tensor products of the vector representation $V$ and its dual:
\begin{equation}\label{eq:248}
  \gl_N \acts \big( V^{\otimes r} \otimes V^{*\otimes t} \big) \actsb \Br_{r,t}(N).
\end{equation}
Here $\Br_{r,t}(N)$ is the walled Brauer algebra, a subalgebra of the Brauer algebra $\rmB_{r+t}(N)$ which was introduced independently by Turaev \cite{MR1024455} and Koike \cite{MR991410}.

Finally, there is a higher version of Schur-Weyl duality (cf.\ \cite{MR1652134}) in which one considers the tensor product of the representation $V^{\otimes r}$ with some (possibly) infinite dimensional module $M \in \catO(\gl_N)$:
\begin{equation}\label{eq:250}
  \gl_N \acts \big( M \otimes V^{\otimes r} \big) \actsb \ucalH_r.
\end{equation}
Here $\ucalH_r$ is the degenerate affine Hecke algebra of $\bbS_r$.

The goal of the present paper is to define a degenerate affine version
of the walled Brauer algebra $\Br_{r,t}(N)$ in such a way that we get
a higher version of mixed Schur-Weyl duality that generalizes both
\eqref{eq:248} and \eqref{eq:250}:
\begin{equation}\label{eq:251}
  \gl_N \acts \big( M \otimes V^{\otimes r} \otimes V^{*\otimes t} \big) \actsb \VB_{r,t}(\bomega).
\end{equation}
For technical reasons, we will actually need $M$ to be a highest weight module; the parameter $\bomega$ is a sequence $(\omega_i)_{i \in \N}$ of complex numbers which depend on $M$.

The passage  $V^{\otimes r} \rightsquigarrow V^{\otimes r} \otimes V^{*\otimes t}$ from \eqref{eq:250} to \eqref{eq:251} is quite natural and is motivated for example by the following reason. Brundan and Kleshchev \cite{MR2551762} constructed an explicit isomorphism between cyclotomic quotients of the degenerate affine Hecke algebra $\ucalH_r$ and the KLR algebra $R$; but the KLR algebra $R$ is in some sense only one half of the KLR 2-category $\dot{\mathucal{U}}$ \cite{1206.17015}. The degenerate affine walled Brauer algebra $\VB_{r,t}(\bomega)$ should correspond to the whole $\dot{\mathucal{U}}$ (see also \eqref{eq:253} below).

We remark that in \eqref{eq:248} we may permute the $V$'s and the $V^*$'s. In particular, we have a version of \eqref{eq:248} for each $(r,t)$-sequence $A=(a_1,\ldots,a_{r+t})$, by which we mean a permutation of the sequence
\begin{equation}
  \label{eq:249}
  (\underbrace{1,\ldots,1}_{r},\underbrace{-1,\ldots,-1}_{t}).
\end{equation}
If in \eqref{eq:248} we replace $V^{\otimes r} \otimes V^{*\otimes t}$ by $V^{\otimes A} = V^{a_1} \otimes \cdots \otimes V^{a_{r+t}}$ with the convention $V^1=V$ and $V^{-1}=V^*$, then the walled Brauer algebra $\Br_A(N)$ which acts on the right is of course isomorphic to $\Br_{r,t}(N)$, but has a different natural presentation. If moreover we consider all these sequences $A$ together, then it becomes natural to replace the walled Brauer algebra with the \emph{walled Brauer category} $\uBr_{r,t}(N)$.
In our setting, it is easier to define directly the \emph{degenerate affine walled Brauer category} $\uVB_{r,t}(\bomega)$ and then get the degenerate affine walled Brauer algebras $\VB_A(\bomega)$ as endomorphism algebras inside our category $\uVB_{r,t}(\bomega)$.

Our first main theorem is:
\begin{theorem*}[See Theorem \ref{thm:1}]
  Let $M$ be a highest weight module for $\gl_N$ and $A$ be a $(r,t)$-sequence. Then there is a natural action of the algebra $\VB_A(\bomega)$ on $M \otimes V^{\otimes A}$, which commutes with the action of $\gl_N$.  Here the parameter $\bomega=\bomega(M)$ is determined by the highest weight of the module $M$.
\end{theorem*}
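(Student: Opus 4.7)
The strategy is to construct, by explicit formulas, operators on $M \otimes V^{\otimes A}$ that represent the generators of $\VB_A(\bomega)$, and then to check (a) that they satisfy the defining relations of $\VB_A(\bomega)$ and (b) that they commute with the diagonal $\gl_N$-action. Since $\VB_A(\bomega)$ was set up as the endomorphism algebra of the object indexed by $A$ inside the category $\uVB_{r,t}(\bomega)$, it will actually be cleaner to build a functor from $\uVB_{r,t}(\bomega)$ to the category of $\gl_N$-modules sending the object $A$ to $M \otimes V^{\otimes A}$; the action on $M \otimes V^{\otimes A}$ then comes out as a corollary.

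The building blocks are the Casimir tensor $\Omega = \sum_{i,j} E_{ij} \otimes E_{ji}$ and its dualized cousins living in $U(\gl_N)^{\otimes 2}$, together with the obvious flip $\tau : V \otimes V \to V \otimes V$ and the evaluation/coevaluation maps relating $V$ and $V^*$. Concretely, for each pair of adjacent positions $i,i+1$ in $A$, I would define: the crossing $\brs_i$ as $\tau$ (resp.\ its $V^*$-versions) when $a_i = a_{i+1}$; the cup-cap $\bre_i$ as $\mathrm{coev}\circ\mathrm{ev}$ when $a_i = -a_{i+1}$. The affine generators $y_i$ attached to position $i$ are defined as partial Casimirs
\begin{equation*}
y_i \;=\; \Omega^{(a_i)}_{0,i} \;+\; \sum_{1 \le j < i} \Omega^{(a_j,a_i)}_{j,i},
\end{equation*}
where $\Omega^{(a_i)}_{0,i}$ acts on the $M$-factor (position $0$) and the $i$-th tensor factor (with appropriate sign conventions according to whether $a_i = 1$ or $-1$, i.e.\ on $V$ or $V^*$), and $\Omega^{(a_j,a_i)}_{j,i}$ likewise on factors $j$ and $i$. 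The scalars $\omega_k$ are then forced: $\omega_k$ is the scalar by which the $k$-th power $\Omega_{0,1}^{\,k}$ acts on the vector $v_\Lambda \otimes v^+$, where $v_\Lambda$ is the highest weight vector of $M$ and $v^+$ the highest weight vector of $V$ (or its dual version).

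With the operators in hand, the verification of the relations divides into three levels. The relations purely among the $\brs_i, \bre_i$ are local and reduce to the classical walled Brauer relations for flip and cup-cap on $V, V^*$, so they are immediate. The mixed relations of $y$'s with $\brs$'s and $\bre$'s away from position $1$ reduce to identities between Casimir operators of the form $\Omega_{j,k}\Omega_{i,j} - \Omega_{i,j}\Omega_{j,k} = [\Omega_{i,k}, \Omega_{j,k}]$ plus compatibility of $\tau$ and the (co)evaluation with $\Omega$; these are standard and well-known from the degenerate affine Hecke setting \cite{MR1652134} and its Brauer variant. The commutativity of the $y_i$'s among themselves is a classical computation with Casimir elements.

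The main obstacle is the $\omega$-relation, namely the interaction between $y_1$ and $\bre_1$ (or the relation encoding what $y_1$ does when a cup closes it off into the module $M$). This is the only place where the module $M$ enters in a non-obvious way and where the choice of parameter $\bomega$ is forced. The point is that after contracting through $\bre_1$, a power $y_1^k$ becomes an operator on $M$ of the form $\sum_{i,j} E_{ji}\cdots E_{ij}$ (a trace over the contracted $V$-factor), which acts on the highest weight vector $v_\Lambda$ by the scalar $\omega_k$ given in terms of the highest weight. Thus the relation $\bre_1 y_1^k \bre_1 = \omega_k \bre_1$ holds at the level of $M \otimes V^{\otimes A}$ precisely when we take $\bomega = \bomega(M)$ as defined. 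Once this is checked, $\gl_N$-equivariance of everything is automatic: the Casimirs $\Omega_{i,j}$ are in the centralizer by construction, and the flip and (co)evaluation are $\gl_N$-intertwiners, so each operator on $M \otimes V^{\otimes A}$ commutes with the diagonal $\gl_N$-action.
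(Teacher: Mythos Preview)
Your proposal is correct and follows essentially the same route as the paper: define $\dot s_i,\dot e_i$ via flips and (co)evaluations, define $y_i$ as a sum of pairwise Casimirs $\Omega_{ki}$ (the paper adds a harmless shift $+N/2$), and then verify the relations, with the only nontrivial input from $M$ being that the contracted operator $\mathrm{ev}\circ y_1^k\circ\mathrm{coev}$ is a $\gl_N$-endomorphism of the highest weight module $M$ and hence a scalar $\omega_k$. One small caution: your first description of $\omega_k$ as ``the scalar by which $\Omega_{0,1}^k$ acts on $v_\Lambda\otimes v^+$'' is not literally correct (that vector is not an eigenvector); the correct formulation---which you give later---is that the traced operator on $M$ is a scalar because $\End_{\gl_N}(M)=\C$, and this is exactly the paper's Lemma~\ref{lem:1}.
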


The action given by the theorem is far from being faithful. To get a
faithful action we need to consider cyclotomic walled Brauer algebras
as endomorphism algebras inside a cyclotomic quotient of the category
$\uVB_{r,t}(\bomega)$. In particular, we will study cyclotomic
quotients of level two. Our second main result is:
\begin{theorem*}[See Theorem \ref{thm:2}]
  Let $m,n,N,r,t \in \N$ with $m+n=N$ and $m,n \geq r+t$. Let $\frakp \subset \gl_N$ be the standard parabolic subalgebra of
  $\gl_N$ corresponding to the two-blocks Levi $\gl_m \oplus \gl_n \subset \gl_N$. For $\delta \in \Z$ with $\delta \neq m,n$ let $M^\frakp(\udelta)$ be the parabolic Verma module in
  $\catO^\frakp(\gl_N)$ with highest weight $\udelta=-\delta(\epsilon_1+\cdots+\epsilon_m)$. Then the action from above
  factors through some cyclotomic quotient $\VB_A(\bomega;
  \beta_1,\beta_2; \beta_1^*, \beta_2^*)$, and we have an
  isomorphism of algebras
  \begin{equation}
    \label{eq:262}
    \VB_A(\boldsymbol \omega; \beta_1, \beta_2; \beta_1^*, \beta_2^*) \cong \End_{\gl_{n+m}}
    (M^\frakp(\udelta) \otimes V^{\otimes A}).
  \end{equation}
The parameters $\bomega, \beta_1,\beta_2,\beta_1^*,\beta_2^*$ depend explicitly on $\delta,m,n$.
\end{theorem*}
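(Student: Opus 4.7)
The plan is to prove the theorem in three stages: first verify that the action from Theorem~\ref{thm:1} descends to the claimed cyclotomic quotient; then establish surjectivity of the resulting map onto $\End_{\gl_N}(M^\frakp(\udelta) \otimes V^{\otimes A})$; and finally compare dimensions to obtain injectivity.

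For the cyclotomic relations, the key observation is that since $\frakp$ corresponds to the two-block Levi $\gl_m \oplus \gl_n$, the restriction of $V$ to $\frakp$ splits as a direct sum of two simple $\frakp$-modules. Consequently, for any dominant weight $\lambda$ the tensor product $M^\frakp(\lambda) \otimes V$ carries a canonical filtration by parabolic Verma modules with exactly two subquotients, and similarly for $V^*$. I would compute the action of the polynomial generators $y_1$ and $y_1^*$ of $\VB_A(\bomega)$ on these two subquotients of $M^\frakp(\udelta) \otimes V$ and $V^* \otimes M^\frakp(\udelta)$ respectively: each acts by a scalar determined by the highest weight, and the two eigenvalues define the parameters $\beta_1, \beta_2$ and $\beta_1^*, \beta_2^*$. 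This forces the minimal polynomials of $y_1$ and $y_1^*$ to have degree two, so the action factors through the announced cyclotomic quotient, with the remaining parameters $\bomega$ read off from the highest weight of $M^\frakp(\udelta)$ via the formula provided by Theorem~\ref{thm:1}.

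For surjectivity I would argue inductively on $r+t$, using that tensoring with $V$ and $V^*$ realises biadjoint exact translation functors on $\catO^\frakp(\gl_N)$. Any $\gl_N$-equivariant endomorphism of $M^\frakp(\udelta) \otimes V^{\otimes A}$ must preserve the filtration by parabolic Verma modules; the hypothesis $m,n \geq r+t$ ensures that no two distinct parabolic Verma subquotients appearing here share a highest weight, so the action is controlled block by block. Combined with the inductive hypothesis and the natural intertwiners between $V^{\otimes A}$ and $V^{\otimes A'}$ for different sequences $A, A'$ (implemented by the crossing and cup/cap generators of the walled Brauer category), this shows every endomorphism lies in the image of $\VB_A(\bomega;\beta_1,\beta_2;\beta_1^*,\beta_2^*)$.

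For injectivity I would match dimensions. The cyclotomic walled Brauer algebra of level two admits a spanning set indexed by walled Brauer diagrams decorated by dots of exponent at most one on each strand, giving an upper bound on its dimension. On the other side, $\End_{\gl_N}(M^\frakp(\udelta) \otimes V^{\otimes A})$ is computed from the multiplicities of parabolic Verma modules in a filtration of $M^\frakp(\udelta) \otimes V^{\otimes A}$ together with the $\Hom$-spaces between parabolic Vermas in $\catO^\frakp$, the latter controlled by Brundan--Stroppel type combinatorics in the regular regime guaranteed by $m,n \geq r+t$. I expect the main obstacle to be showing that the diagrammatic spanning set has exactly the size predicted by the category~$\catO$ computation: producing a cellular basis of the cyclotomic walled Brauer algebra of the correct cardinality and matching it bijectively with a basis of homomorphisms requires careful bookkeeping of weights and is the principal technical difficulty of the proof.
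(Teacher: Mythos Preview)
Your overall architecture is the reverse of the paper's, and the surjectivity leg has a real gap. The paper proves \emph{injectivity} first, by a direct and quite concrete argument: it uses the PBW filtration on $M^\frakp(\udelta)$ (grade $X_{\zeta_1}\cdots X_{\zeta_\ell} z$ by $\ell$) to compute the leading term of $y_h$ acting on $z \otimes v^{\otimes A}_b$, and then shows that for each cyclotomic regular monomial one can choose input and output vectors so that the matrix coefficient of the action picks out exactly that monomial's coefficient in any linear relation (see \eqref{eq:50} and \eqref{eq:51}). This immediately shows the spanning set of $2^{r+t}(r+t)!$ cyclotomic regular monomials is linearly independent, hence a basis, hence the map is injective. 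Surjectivity is then a one-line dimension count: by adjunction $\End_{\gl_N}(M^\frakp(\udelta)\otimes V^{\otimes A}) \cong \End_{\gl_N}(M^\frakp(\udelta)\otimes V^{\otimes(r+t)})$ as vector spaces, and the latter has dimension $2^{r+t}(r+t)!$ by a standard tableaux count.

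Your proposed surjectivity argument does not go through as written. The assertion that ``no two distinct parabolic Verma subquotients appearing here share a highest weight'' is false: multiplicities certainly occur (for instance, add a box and then remove the same box, or permute the order in which two boxes are added), and indeed the whole point of Corollary~\ref{cor:3} is that the filtration has length $2^{r+t}(r+t)!$, which vastly exceeds the number of distinct weights reachable. Once multiplicities are present, an arbitrary $\gl_N$-endomorphism need not preserve any particular Verma filtration, and the ``block by block'' control you invoke collapses. The inductive step is therefore not justified, and you would in any case still need the exact dimension of the cyclotomic quotient to finish --- which is precisely what the paper's injectivity argument supplies. I would recommend abandoning the inductive surjectivity plan and instead establishing faithfulness directly via the leading-term/PBW method; the adjunction trick then gives surjectivity for free.
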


As a direct corollary of the second theorem, the cyclotomic quotient
$\VB_A(\bomega;\beta_1,\beta_2;\beta_1^*,\beta_2^*)$ inherits a
natural grading and the structure of a graded cellular algebra. Note
that the grading, however, depends on the chosen ordering of the
factors, that is on the $(r,t)$-sequence $A$.

Our definition of the algebra $\VB_{r,t}(\bomega)$ is inspired by
Nazarov's affine Wenzl algebra $\Wenzl_r(\bomega)$ \cite{MR1398116},
which can be thought as a degenerate affine version of the Brauer
algebra $\rmB_r(N)$. In particular, it is generated by the standard
generators of the walled Brauer algebra $\Br_{r,t}(\omega_0)$ together
with a polynomial ring $\C[y_1,\ldots,y_{r+t}]$.
Inspired from \cite{MR1398116} we use formal power series to handle the parameters $\omega_k$.
 
Our result has an analogue for types $B$, $C$ and $D$ of the form
\begin{equation}\label{eq:247}
  \frakg \acts \big( M \otimes V^{\otimes r} \big) \actsb \Wenzl_r(\bomega)
\end{equation}
established in \cite{ES2013}. Note however that
although $\Br_{r,t}(N)$ is a subalgebra of $\rmB_{r+t}(N)$, the
degenerate affine version $\VB_{r,t}(\bomega)$ is not (at least in a
natural way) a subalgebra of $\Wenzl_{r+t}(\bomega)$. This indicates that there is a close, but non-trivial relationship between type $A$ and type $B$, $C$, $D$ Lie algebras.

Using the diagrammatical description of category $\catO^\frakp$ from \cite{MR2781018} one can show that the KLR 2-category $\dot{\mathucal{U}}$ \cite{1206.17015} acts on $\catO^\frakp$ and moreover there is an isomorphism of algebras
\begin{equation}
  \label{eq:253}
  \End_{\dot{\mathucal{U}}}(\mathucal{F} \boldsymbol{1}_\lambda) / I \cong \VB_A (\bomega;\beta_1,\beta_2;\beta_1^*,\beta_2^*),
\end{equation}
where $\mathucal{F}$ is a $1$-morphism in $\dot{\mathucal{U}}$ corresponding to the chosen sequence $A$, the weight $\lambda$ is determined by the highest weight of the module $M^\frakp(\udelta)$ of \eqref{eq:262} and the ideal $I$ indicates that we have to take some cyclotomic quotient, determined also by the highest weight $\udelta$ or equivalently by the parameters $\beta_1,\beta_2,\beta_1^*,\beta_2^*$. This result will appear as part of joint work with Ehrig and Stroppel in \cite{miophd}. 

We point out that degenerate affine walled Brauer algebras have been defined independently by Rui and Su \cite{2013arXiv1305.0450R}. They prove another version of Schur-Weyl duality involving the Lie superalgebra $\gl(m|n)$ and their degenerate affine walled Brauer algebra. Our two approaches are connected via the super duality of \cite{1234.17004} established in \cite{MR2881300} relating $\gl(m|n)$ with the classical Lie algebra $\gl_{m+n}$.

\subsubsection*{Structure of the paper} We define the degenerate affine walled Brauer category and algebras in Section~\ref{sec:degen-affine-wall}. In Section~\ref{sec:graph-descr} we give a diagrammatical description that allows us to describe a set of generators as a vector space. In Section~\ref{sec:centre} we compute the centre of the degenerate affine walled Brauer category. In Section~\ref{sec:acti-gl_n-repr} we define the action on $\gl_N$-representations and state the first main theorem, whose proof will be presented as a series of lemmas in Section~\ref{sec:proofs}. In Section~\ref{sec:cyclotomic-quotients} we define cyclotomic quotients and prove our second main result. In Section~\ref{sec:diagr-part-calc} we will compute explicitly the generalized eigenvalues of the $y_i$'s in the cyclotomic quotients.

\subsubsection*{Acknowledgements} The present paper is part of the author's PhD thesis. The author would like to thank his advisor Catharina Stroppel for her help and support.

\section{The degenerate affine walled Brauer category}
\label{sec:degen-affine-wall}

In this section we define the degenerate affine walled Brauer category. We will indicate by $\bbS_n$ the symmetric group with simple reflections $\sfs_i$ for $i=1,\ldots,n-1$.

In the following by an $(r,t)$-sequence $A=(a_1,\ldots,a_{r+t})$ we will mean a permutation of the sequence
    \begin{equation}
      \label{eq:1}
      (\underbrace{1,\ldots,1}_{r},\underbrace{-1,\ldots,-1}_{t}).
    \end{equation}
We let $\Seq_{r,t}$ denote the set of $(r,t)$-sequences.

\begin{definition}\label{def:2}
  Let $r,t \in \N$ and fix a sequence $\boldsymbol
  \omega=(\omega_k)_{k \in \N}$ of complex parameters. The {\em
    degenerate affine walled Brauer category} $\uVB_{r,t}(\boldsymbol
  \omega)$ is the category defined as follows. The objects are
  $(r,t)$-sequences $A=(a_1,\ldots,a_{r+t}) \in \Seq_{r,t}$.
        Morphisms are generated by the following endomorphisms of $A$:
    \begin{equation}
      \label{eq:2}
      \begin{aligned}
        s^{(A)}_{i} & \qquad \text{for all } 1\leq i \leq r+t-1 \text{ such that } a_i=a_{i+1},\\ 
        e^{(A)}_{i} & \qquad \text{for all } 1\leq i \leq r+t-1 \text{ such
          that } a_i\neq a_{i+1},\\ 
        y^{(A)}_{i} & \qquad \text{for all } 1\leq i \leq r+t.
      \end{aligned}
    \end{equation}
    and the following two morphisms $A \rightarrow A'$
    \begin{equation}
      \label{eq:3}
        \hat{s}^{(A)}_{j}, \hat{e}^{(A)}_{j},
    \end{equation}
    where $A'=
    \mathsf s_j A$ for some simple reflection $\mathsf s_j \in \bbS_{r+t}$
    such that $A' \neq A$. We will often omit the superscript $A$.

    We impose the following relations on morphisms (where we use $\dot s_i,
    \dot e_i$ to denote both $s_i, \hat s_i$ and $e_i, \hat e_i$
    respectively; the relations are assumed to hold for all possible
    choices that make sense):
    \begin{enumerate}[(1),ref=\arabic*,itemsep=1ex]
    \item \label{item:1} $\dot s_i \dot s_i=1$,
    \item \label{item:2}
      \begin{enumerate}[(a),ref=\alph*]
      \item \label{item:3} $\dot s_i \dot s_j=\dot s_j \dot s_i$ for
        $\abs{i-j}>1$,
      \item \label{item:4} $\dot s_i \dot s_{i+1} \dot s_i = \dot
        s_{i+1} \dot s_{i} \dot s_{i+1}$,
      \item \label{item:5}$\dot s_i y_j= y_j \dot s_i$ for $j \neq i,i+1$,
      \end{enumerate}
    \item \label{item:6}$(e^{(A)}_{i})^2= \omega_0 e^{(A)}_{i}$,
      \item \label{item:7}$e^{(A)}_{1} y_1^k e^{(A)}_{1}= \omega_k e^{(A)}_{1}$ for $k \in \N$ if $a_1=1,a_{2}=-1$,
      \item \label{item:8}
        \begin{enumerate}[(a), ref=\alph*]
        \item \label{item:9}$\dot s_i \dot e_j=\dot e_j \dot s_i$ and
          $\dot e_i \dot e_j = \dot e_j \dot e_i$ for $\abs{i-j}>1$,
        \item \label{item:10}$\dot e_i y_j= y_j \dot e_i$ for $j \neq i,i+1$,
        \item \label{item:11}$y_i y_j=y_j y_i$,
        \end{enumerate}
      \item \label{item:12}
        \begin{enumerate}[(a), ref=\alph*]
        \item \label{item:13}$\hat{s}_i \dot e_i = \dot e_i = \dot e_i \hat{s}_i$,
        \item \label{item:14}$\dot s_i \dot e_{i+1} \dot e_i = \dot
          s_{i+1} \dot e_i$ and $\dot e_i \dot e_{i+1} \dot s_i =
          \dot e_i \dot s_{i+1}$,
        \item \label{item:15}$\dot e_{i+1} \dot e_i \dot s_{i+1} =
          \dot e_{i+1} \dot s_i$ and $\dot s_{i+1} \dot e_i \dot
          e_{i+1} = \dot s_i \dot e_{i+1}$,
        \item \label{item:16}$\dot e_{i+1} \dot e_i \dot e_{i+1} =
          \dot e_{i+1} $ and $\dot e_i \dot e_{i+1} \dot e_i = \dot
          e_i$,
        \end{enumerate}
      \item \label{item:17}
        \begin{enumerate}[(a), ref=\alph*]
        \item \label{item:18}$s_i y_i - y_{i+1}s_i = - 1$ and $ s_i
          y_{i+1} -y_i s_i = 1$,
        \item \label{item:19}$\hat s_i y_i - y_{i+1} \hat s_i = \hat e_i$ and
          $\hat s_i y_{i+1} - y_i \hat s_i  = - \hat e_i$,
        \end{enumerate}
      \item \label{item:20}
        \begin{enumerate}[(a), ref=\alph*]
        \item \label{item:21}$\dot e_i(y_i + y_{i+1})=0$,
        \item \label{item:22}$(y_i + y_{i+1}) \dot e_i = 0$.
        \end{enumerate}
    \end{enumerate}
\end{definition}

\begin{remark}
  \label{rem:1}
  Notice that the relation (\ref{item:8}\ref{item:10}) for $\hat e_i$ is implied by the relations (\ref{item:17}\ref{item:19}), (\ref{item:8}\ref{item:11}) and (\ref{item:2}\ref{item:5}). Moreover, the relations (\ref{item:20}\ref{item:21}-\ref{item:22}) for $\hat e_i$ are implied by the same relations for $e_i$ together with (\ref{item:12}\ref{item:13}) and the invertibility of $\hat s_i$.
\end{remark}

\begin{remark}
  \label{rem:3}
  Notice that we need to impose the relation (\ref{item:6}) only for
  $A,i$ such that $a_i=1,a_{i+1}=-1$. In fact, consider $A'=\mathsf
  s_i A$; we have $(e^{(A')}_{i})^2 = e^{(A')}_{i} e^{(A')}_{i}= \hat
  s_i e^{(A)}_{i} \hat s_i \hat s_i e^{(A)}_{i} \hat s_{i} = \hat s_i
  (e^{(A)}_{i})^2 \hat s_i = \omega_0 \hat s_i e^{(A)}_{i} \hat s_i =
  \omega_0 e^{(A')}_{i}$. In the same way we obtain more generally
  $\dot e_i \dot e_i = \omega_0 \dot e_i$ for all possible choices.
\end{remark}

\begin{remark}
  \label{rem:7}
  One could give a more general definition taking the $\omega_k$'s to be formal central parameters. Definition~\ref{def:2} would then be a specialized version.
\end{remark}

We stress again that we will try to omit the superscript $A$ of the generators of $\uVB_{r,t}(\bomega)$ as often as possible. The formulas we write are then supposed to write for all choices which make sense.

\begin{definition}
  \label{def:1}
  Let $A \in \Seq_{r,t}$ and fix a sequence $\boldsymbol
  \omega$ of complex parameters. The {\em degenerate affine walled
    Brauer algebra} corresponding to $A$ is
  \begin{equation}
    \label{eq:4}
    \VB_{A}(\boldsymbol \omega) = \End_{\uVB_{r,t}(\boldsymbol \omega)}(A).
  \end{equation}
\end{definition}

\begin{remark}
  \label{rem:2}
  Notice that $\VB_A(\bomega)$ is generated by the $s_i^{(A)},
  e_i^{(A)}, y_i^{(A)}$ (for all $i$ such that these elements
  exist). However, these generators satisfy some non-trivial relations
  that are implied by the relations defining the whole category (that
  involve also other homomorphisms spaces).  Of course, we could
  try to define the degenerate affine walled Brauer algebra without using
  Definition \ref{def:2}. But then the relations, as the proofs that
  will follow, would be much more complicated.
\end{remark}

The following result is straightforward:

\begin{lemma}
  \label{lem:16}
  All degenerate affine walled Brauer algebras corresponding to $(r,t)$-sequences are isomorphic.
\end{lemma}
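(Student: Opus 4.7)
The plan is to exploit the invertible morphisms $\hat s_j$ between distinct objects of the category. Concretely, I would observe that relation (\ref{item:1}) applied to $\dot s = \hat s$ gives $\hat s_j^{(A)} \hat s_j^{(A')} = \idem_{A'}$ and $\hat s_j^{(A')} \hat s_j^{(A)} = \idem_A$ whenever $A' = \mathsf s_j A \neq A$, so $\hat s_j^{(A)} : A \to A'$ is an isomorphism in $\uVB_{r,t}(\bomega)$. Given any such isomorphism $\phi : A \xrightarrow{\sim} A'$, conjugation $f \mapsto \phi f \phi^{-1}$ yields an isomorphism of $\C$-algebras
\begin{equation*}
\End_{\uVB_{r,t}(\bomega)}(A) \xrightarrow{\sim} \End_{\uVB_{r,t}(\bomega)}(A'),
\end{equation*}
i.e.\ $\VB_A(\bomega) \cong \VB_{A'}(\bomega)$.

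Thus the statement reduces to showing that any two objects $A, A' \in \Seq_{r,t}$ are connected by a chain of simple reflections, each of which is nontrivial on the intermediate sequence (so that the corresponding $\hat s$ actually exists as a morphism in the category). For this I would argue by a straightforward bubble-sort on the word $A$: to transform $A$ into $A'$, repeatedly find adjacent positions $i, i+1$ whose entries in the current sequence differ from those of $A'$, and exchange them using $\mathsf s_i$; one can always arrange that at each step one is swapping a $1$ with a $-1$, since swapping two equal entries would not alter the sequence and can simply be omitted. Composing the corresponding $\hat s_i$'s produces the desired isomorphism $A \to A'$ in $\uVB_{r,t}(\bomega)$.

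The proof is therefore essentially formal once one has Definition~\ref{def:2}: no relation needs to be checked, because the algebra isomorphism is induced by a categorical isomorphism. The only mildly subtle point is the combinatorial observation that any permutation of a two-letter word can be realised by a reduced chain of swaps which all actually modify the word; this is the only place where the restriction $a_i \neq a_{i+1}$ in the definition of $\hat s_i$ matters, and it causes no real difficulty.
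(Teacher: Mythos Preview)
Your proposal is correct and follows essentially the same approach as the paper: the paper's proof is the single sentence ``The isomorphisms are given by multiplication with a finite composition of $\dot s_i$'s,'' and you have simply unpacked this by making explicit that the relevant $\dot s_i$'s are the $\hat s_i$'s (which are isomorphisms by relation~(\ref{item:1})) and that any two $(r,t)$-sequences are connected by a chain of such swaps.
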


\begin{proof}
  The isomorphisms are given by multiplication with a finite composition of $\dot s_i$'s.
\end{proof}

Notice that all relations defining the degenerate affine walled Brauer category are symmetric in $i$ except for \eqref{item:7}. We will spend the rest of this section to compute $e_i^{(A)} y_i^l e_i^{(A)}$ for all indices $i$.
The following lemma generalizes the relations (\ref{item:17}\ref{item:18}-\ref{item:19}) to higher powers of $y_i$:

\begin{lemma}
  \label{lem:15}
  For all $k \in \N$ we have
  \begin{align}
    s_i y_i^k - y_{i+1}^k s_i & = - \sum_{\ell=1}^k y_{i+1}^{\ell-1} y_i^{k-\ell}, \label{eq:55}\\ \displaybreak[2]
    s_i y_{i+1}^k - y_{i}^k s_i & = \sum_{\ell=1}^k y_{i+1}^{\ell-1} y_i^{k-\ell}, \label{eq:155}\\ \displaybreak[2]
    \hat s_i y_i^k - y_{i+1}^k \hat s_i & =  \sum_{\ell=1}^k y_{i+1}^{\ell-1} \hat e_i y_i^{k-\ell},\label{eq:53}\\ \displaybreak[2]
    \hat s_i y_{i+1}^k - y_{i}^k \hat s_i & =  - \sum_{\ell=1}^k y_{i}^{\ell-1} \hat e_i y_{i+1}^{k-\ell}.\label{eq:156}
  \end{align}
\end{lemma}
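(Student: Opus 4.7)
The proof is a straightforward induction on $k$, so the plan is to set up the induction carefully and note which defining relations are used at each step. The base case $k=1$ for each of the four identities is exactly one of the relations (\ref{item:17}\ref{item:18}-\ref{item:19}), and for $k=0$ all four identities reduce to the trivial $0=0$ (interpreting the empty sum as zero).

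For the inductive step for \eqref{eq:55}, I would multiply the identity for $k$ on the right by $y_i$, obtaining
\[
s_i y_i^{k+1} = y_{i+1}^k s_i y_i - \sum_{\ell=1}^k y_{i+1}^{\ell-1} y_i^{k-\ell+1},
\]
then substitute $s_i y_i = y_{i+1} s_i - 1$ from (\ref{item:17}\ref{item:18}) into $y_{i+1}^k s_i y_i$ to get an extra $-y_{i+1}^k$, which is precisely the missing $\ell=k+1$ term in the desired sum. Identity \eqref{eq:155} is obtained identically using instead the second half of (\ref{item:17}\ref{item:18}), and both calculations use only the commutativity $y_i y_{i+1}=y_{i+1} y_i$ from (\ref{item:8}\ref{item:11}).

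The induction for \eqref{eq:53} and \eqref{eq:156} is formally the same, but using (\ref{item:17}\ref{item:19}) as base case. For \eqref{eq:53}, multiplying the inductive hypothesis on the right by $y_i$ and substituting $\hat s_i y_i = y_{i+1}\hat s_i + \hat e_i$ produces the extra term $y_{i+1}^k \hat e_i$, which fills in the $\ell=k+1$ slot of the sum $\sum_{\ell=1}^{k+1} y_{i+1}^{\ell-1}\hat e_i y_i^{k+1-\ell}$; identity \eqref{eq:156} is symmetric. Crucially, I would not attempt to move $\hat e_i$ past the powers of $y_i$ or $y_{i+1}$ (the relations (\ref{item:20}) would let one do this only up to signs and a swap of the two indices), since the asymmetric form of the right-hand sums is exactly what makes the induction close cleanly.

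There is no real obstacle here; the only risk is bookkeeping with the summation indices and the sign conventions distinguishing $s_i$ from $\hat s_i$. I would therefore write the argument as a single induction, handling the four identities in parallel, and verify once at the end that the index shift $\sum_{\ell=1}^{k+1}=\sum_{\ell=1}^{k}+(\ell=k+1\text{ term})$ matches the new term produced by the substitution of the $k=1$ relation in each case.
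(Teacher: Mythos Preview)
Your proposal is correct and follows exactly the approach indicated in the paper, which simply says to ``argue by induction, as in \cite[Lemma~2.3]{MR2235339}''. Your write-up spells out precisely this induction, using relations (\ref{item:17}\ref{item:18}-\ref{item:19}) for the base case and for the substitution in the inductive step, together with (\ref{item:8}\ref{item:11}) to commute the $y$'s.
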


\begin{proof}
  Argue by induction, as in \cite[Lemma 2.3]{MR2235339}.
\end{proof}

Following \cite{MR1398116}, we introduce formal power series. Let $u$ be a formal variable, and let us work with formal Laurent power series in $u^{-1}$. Then from \eqref{eq:55} we have
\begin{equation}
  \label{eq:54}
  \begin{split}
    s_i \frac{1}{u- y_i}& = u^{-1} s_i \bigg( \sum_{k=0}^\infty y_i^k u^{-k} \bigg)\\
    &=u^{-1} \sum_{k=0}^\infty \bigg( y_{i+1}^k s_i - \sum_{\ell=1}^k y_{i+1}^{\ell-1} y_i^{k-\ell} \bigg) u^{-k}\\
    &= \frac{1}{u-y_{i+1}} s_i - \frac{1}{(u-y_{i+1})(u-y_{i})}.
  \end{split}
\end{equation}
Similarly, from \eqref{eq:155}, \eqref{eq:53} and \eqref{eq:156} we get respectively:
  \begin{align}
    s_i \frac{1}{u-y_{i+1}}& = \frac{1}{u-y_i} s_i +
    \frac{1}{(u-y_{i+1})(u-y_i)}, \label{eq:81}\\
    \hat s_i \frac{1}{u-y_i} &= \frac{1}{u-y_{i+1}} \hat s_i +
    \frac{1}{u-y_{i+1}} \hat e_i \frac{1}{u-y_i}, \label{eq:56}\\
    \hat s_i \frac{1}{u-y_{i+1}} &= \frac{1}{u-y_i} \hat s_i -
    \frac{1}{u-y_{i}} \hat e_i \frac{1}{u-y_{i+1}}\label{eq:82}.
  \end{align}
  
Set now 
\begin{equation}
W_1(u)=\sum_{k=0}^\infty \omega_k u^{-k}.\label{eq:43}
\end{equation}
Choose $A \in \Seq_{r,t}$ with $a_1=1$, $a_2=-1$. Then relation (\ref{item:7}) can be rewritten in the compact form
\begin{equation}
\begin{aligned}
  e^{(A)}_{1} \frac{1}{u-y_1} e^{(A)}_{1} & = u^{-1} e^{(A)}_{1} \bigg(\sum_{k=0}^\infty y_1^k u^{-k}\bigg) e^{(A)}_{1}\\
  &= u^{-1} \bigg(\sum_{k=0}^\infty \omega_k u^{-k}\bigg) e^{(A)}_{1} = \frac{W_1(u)}{u} e^{(A)}_{1}.
\end{aligned}\label{eq:39}
\end{equation}

We remark that $\frac{W_1(u)}{u} = u^{-1} W_1(u) \in \C[[u^{-1}]]$.

\begin{lemma}
  \label{lem:18}
  Let $A' \in \Seq_{r,t}$ be such that $a'_1=-1$, $a'_2=1$. Then
  \begin{equation}
    \label{eq:57}
    e^{(A')}_{1} \frac{1}{u-y_1} e^{(A')}_{1} = \frac{W_1^*(u)}{u} e^{(A')}_{1}
  \end{equation}
  where
  \begin{equation}
    \label{eq:58}
    \frac{W_1^*(u)}{u}  =\frac{ W_1(-u) }{ u - W_1(-u)}.
  \end{equation}
\end{lemma}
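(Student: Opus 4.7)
The plan is to transport the computation from $A'$ back to $A=(1,-1,a_3,\ldots,a_{r+t})$ by conjugation with $\hat s_1$. The two readings of relation~(\ref{item:12}\ref{item:13}) give $e_1^{(A')} = \hat e_1^{(A)}\hat s_1^{(A')} = \hat s_1^{(A)}\hat e_1^{(A')}$; inserting these on the left and right respectively and using (\ref{item:1}) yields
\begin{equation*}
e_1^{(A')}\frac{1}{u-y_1^{(A')}}e_1^{(A')} = \hat e_1^{(A)} \cdot R(u) \cdot \hat e_1^{(A')},\qquad R(u):=\hat s_1^{(A')}\frac{1}{u-y_1^{(A')}}\hat s_1^{(A)},
\end{equation*}
where $R(u)$ is an element of $\End_{\uVB_{r,t}(\bomega)}(A)[[u^{-1}]]$.

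I would first derive a closed form for $R(u)$. Relation~(\ref{item:17}\ref{item:19}) gives $\hat s_1^{(A')}y_1^{(A')} = y_2^{(A)}\hat s_1^{(A')} + \hat e_1^{(A')}$; composing with $\hat s_1^{(A)}$ on the right and using $\hat s_1^{(A')}\hat s_1^{(A)}=\id_A$ together with the identity $\hat e_1^{(A')}\hat s_1^{(A)} = e_1^{(A)}$ (the other reading of (\ref{item:12}\ref{item:13})) yields $\hat s_1^{(A')}y_1^{(A')}\hat s_1^{(A)} = y_2^{(A)} + e_1^{(A)}$. Inserting $\hat s_1^{(A)}\hat s_1^{(A')}=\id_{A'}$ between the factors of $(y_1^{(A')})^k$ promotes this to $\hat s_1^{(A')}(y_1^{(A')})^k \hat s_1^{(A)} = (y_2^{(A)}+e_1^{(A)})^k$ for every $k\geq 0$, whence
\begin{equation*}
R(u) = \frac{1}{u-y_2^{(A)}-e_1^{(A)}} = \sum_{k\geq 0}\frac{1}{u-y_2^{(A)}}\bigg(e_1^{(A)}\frac{1}{u-y_2^{(A)}}\bigg)^{k}.
\end{equation*}

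The key auxiliary identity is $e_1^{(A)}(u-y_2^{(A)})^{-1} e_1^{(A)} = \frac{W_1(-u)}{u}e_1^{(A)}$: indeed (\ref{item:20}\ref{item:21}) and commutativity (\ref{item:8}\ref{item:11}) imply $e_1^{(A)}(y_2^{(A)})^k = (-1)^k e_1^{(A)}(y_1^{(A)})^k$, so \eqref{eq:39} gives $e_1^{(A)}(y_2^{(A)})^k e_1^{(A)}=(-1)^k \omega_k e_1^{(A)}$, and summing over $k$ yields the claim. Writing $\hat e_1^{(A)} = \hat s_1^{(A)} e_1^{(A)}$ and $\hat e_1^{(A')} = e_1^{(A)} \hat s_1^{(A')}$ from (\ref{item:12}\ref{item:13}) and using $\hat s_1^{(A)} e_1^{(A)} \hat s_1^{(A')} = e_1^{(A')}$, one reduces the $k$-th summand of $\hat e_1^{(A)} R(u) \hat e_1^{(A')}$ via a short induction to $(W_1(-u)/u)^{k+1} e_1^{(A')}$. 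Summing the geometric series produces
\begin{equation*}
\sum_{k\geq 0}\bigg(\frac{W_1(-u)}{u}\bigg)^{k+1} e_1^{(A')} = \frac{W_1(-u)}{u-W_1(-u)}\,e_1^{(A')},
\end{equation*}
which is \eqref{eq:57}--\eqref{eq:58}.

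The main obstacle is the bookkeeping of sources and targets for the hatted morphisms when invoking (\ref{item:12}\ref{item:13}) and (\ref{item:17}\ref{item:19}); once the key formula $\hat s_1^{(A')} y_1^{(A')} \hat s_1^{(A)} = y_2^{(A)}+e_1^{(A)}$ and the auxiliary identity are in place, the remainder is a formal $u^{-1}$-adic manipulation modeled on \eqref{eq:54}--\eqref{eq:82} and \eqref{eq:39}.
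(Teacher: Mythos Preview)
Your argument is correct and follows the same overall strategy as the paper: conjugate by $\hat s_1$ to pass from $A'$ to $A=\mathsf s_1 A'$, then reduce to the known identity \eqref{eq:39} via $e_1^{(A)}y_2=-e_1^{(A)}y_1$. The execution differs slightly: the paper applies \eqref{eq:56} once to $\hat s_1\,\tfrac{1}{u-y_1}$, obtaining the self-referential equation
\[
e_1^{(A')}\tfrac{1}{u-y_1}e_1^{(A')}=\tfrac{W_1(-u)}{u}\Big(e_1^{(A')}+e_1^{(A')}\tfrac{1}{u-y_1}e_1^{(A')}\Big),
\]
and solves it in one step; you instead compute the full conjugate $\hat s_1^{(A')}\tfrac{1}{u-y_1}\hat s_1^{(A)}=(u-y_2^{(A)}-e_1^{(A)})^{-1}$ directly from (\ref{item:17}\ref{item:19}) and sum the resulting geometric series. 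These are equivalent manoeuvres (solving $X=c(1+X)$ versus summing $\sum_{k\geq 0}c^{k+1}$), and your closed form for $R(u)$ is a pleasant by-product not made explicit in the paper's proof.
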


\begin{proof}
Let $A=\mathsf s_1 A'$ and compute using \eqref{eq:56}:
\begin{equation}
  \label{eq:46}
  \begin{split}
    &e^{(A')}_{1} \frac{1}{u-y_1} e^{(A')}_{1}  = \hat s_1 e^{(A)}_{1} \hat s_1 \frac{1}{u-y_1} \hat s_1 e^{(A)}_{1} \hat s_1 \\
    &\qquad = \hat s_1 e^{(A)}_{1} \frac{1}{u-y_{2}} \hat s_1 \hat s_1 e^{(A)}_{1} \hat s_1 + \hat s_1 e^{(A)}_{1} \frac{1}{u-y_{2}} \hat e^{(A')}_{1} \frac{1}{u-y_1} \hat s_1 e^{(A)}_{1} \hat s_1\\
    &\qquad = \hat s_1 e^{(A)}_{1} \frac{1}{u+y_1} e^{(A)}_{1} \hat s_1 + \hat s_1 e^{(A)}_{1} \frac{1}{u+y_1} e^{(A)}_{1} \hat s_1 \frac{1}{u-y_1} \hat s_{1} e^{(A)}_{1} \hat s_1\\
    &\qquad = \frac{W_1(-u)}{u} \hat s_1 e^{(A)}_{1} \hat s_1 + \frac{W_1(-u)}{u} \hat s_1 e^{(A)}_{1} \hat s_1 \frac{1}{u-y_1} \hat s_1 e^{(A)}_{1} \hat s_1\\
    &\qquad = \frac{W_1(-u)}{u} e^{(A')}_{1} + \frac{W_1(-u)}{u} e^{(A')}_{1} \frac{1}{u-y_1} e^{(A')}_{1}.
  \end{split}
\end{equation}
The claim follows.
\end{proof}

\begin{remark}
  Notice that the map $* : \C[[u^{-1}]] \mapto \C[[u^{-1}]]$ defined
  by
  \begin{equation}
    \label{eq:254}
    f^*(u) = \frac{ f(-u)}{ 1- u^{-1} f(-u)}
  \end{equation}
  is an involution, that is $f^{**}(u) = f(u) $ for all $u \in
  \C[[u^{-1}]]$.\label{rem:5}
\end{remark}

Let $R=\C[y_1,\ldots,y_{r+t}]$. Then the same proof of Lemma \ref{lem:18} gives the following more general result:
\begin{lemma}
  \label{lem:29}
  Let $A \in \Seq_{r,t}$ and suppose $a_i \neq a_{i+1}$ for some index $i$. Suppose that for  $A'=\sfs_{i} A$ the following holds:
  \begin{equation}
    \label{eq:163}
    e_{i}^{(A')} \frac{1}{u-y_{i}} e_{i}^{(A')} =
    \frac{W^{(A')}_{i}(u)}{u} e_{i}^{(A')},
  \end{equation}
  for some $W^{(A')}_i \in R[[u^{-1}]]$. Then
  \begin{equation}
    \label{eq:164}
    e_{i}^{(A)} \frac{1}{u-y_{i}} e_{i}^{(A)} =
    \frac{W^{(A)}_{i}(u)}{u} e_{i}^{(A)},
  \end{equation}
  where
  \begin{equation}
    \label{eq:165}
    \frac{W^{(A)}_i (u)}{u} = \frac{W^{(A')}_i(-u)}{u-W^{(A')}_i(-u)}.
  \end{equation}
\end{lemma}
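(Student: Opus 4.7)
The plan is to imitate the proof of Lemma~\ref{lem:18} almost verbatim, with the index $1$ (resp.\ $2$) replaced by $i$ (resp.\ $i+1$), and with the use of the defining relation (\ref{item:7}) replaced by the hypothesis (\ref{eq:163}). All other ingredients---the absorption relations (\ref{item:12}\ref{item:13}), the annihilation relation (\ref{item:20}\ref{item:21}), and the formal power series identity (\ref{eq:56})---were established in Section~\ref{sec:degen-affine-wall} for an arbitrary index, so they are directly applicable at position $i$.

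Concretely, I first use $\hat s_i \hat s_i = 1$ together with (\ref{item:12}\ref{item:13}) to write $e_i^{(A)} = \hat s_i e_i^{(A')} \hat s_i$, and substitute this on both sides to get
\begin{equation*}
e_i^{(A)} \frac{1}{u-y_i} e_i^{(A)} = \hat s_i e_i^{(A')} \hat s_i \frac{1}{u-y_i} \hat s_i e_i^{(A')} \hat s_i.
\end{equation*}
Applying (\ref{eq:56}) to the central factor $\hat s_i \frac{1}{u-y_i}$ splits the right-hand side into two terms. In each term I use (\ref{item:20}\ref{item:21}) to convert $\frac{1}{u-y_{i+1}}$ sitting immediately to the right of $e_i^{(A')}$ into $\frac{1}{u+y_i} = -\frac{1}{(-u)-y_i}$, and then invoke the hypothesis (\ref{eq:163}) at the shifted argument $-u$ to recognize the factor
\begin{equation*}
e_i^{(A')} \tfrac{1}{u+y_i} e_i^{(A')} \;=\; \tfrac{W_i^{(A')}(-u)}{u}\, e_i^{(A')}.
\end{equation*}
Conjugating the outer $\hat s_i$'s back collapses $\hat s_i e_i^{(A')} \hat s_i$ into $e_i^{(A)}$.

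Setting $X := e_i^{(A)} \frac{1}{u-y_i} e_i^{(A)}$, the two terms assemble into the self-referential equation
\begin{equation*}
X \;=\; \tfrac{W_i^{(A')}(-u)}{u}\, e_i^{(A)} \;+\; \tfrac{W_i^{(A')}(-u)}{u}\, X
\end{equation*}
in $R[[u^{-1}]]$-linear combinations of endomorphisms of $A$, whose unique solution is precisely (\ref{eq:164})--(\ref{eq:165}). I do not expect any real obstacle: the only mild subtlety is the source/target bookkeeping of the four occurrences of $\hat s_i$ (alternating between $\hat s_i^{(A)}$ and $\hat s_i^{(A')}$), which is handled automatically by the conventions of the category $\uVB_{r,t}(\bomega)$.
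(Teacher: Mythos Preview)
Your proposal is correct and follows exactly the approach of the paper, which itself says that Lemma~\ref{lem:29} is proved by ``the same proof of Lemma~\ref{lem:18}''. Your outline reproduces that computation at a general index $i$, replacing the use of relation~(\ref{item:7}) by the hypothesis~(\ref{eq:163}), and your handling of the source/target bookkeeping for the $\hat s_i$'s is fine.
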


\begin{lemma}
  \label{lem:28}
  Let $A \in \Seq_{r,t}$ with $(a_i,a_{i+1},a_{i+2})=(1,1,-1)$ for some index $i$. Suppose that for $A'=\sfs_{i+1} A$ the following holds:
  \begin{equation}
    \label{eq:162}
    e_{i}^{(A')} \frac{1}{u-y_{i}} e_{i}^{(A')} =
    \frac{W^{(A')}_{i}(u)}{u} e_{i}^{(A')},
  \end{equation}
  for some $W^{(A')}_i \in R[[u^{-1}]]$. Then
  \begin{equation}
    \label{eq:79}
    e_{i+1}^{(A)} \frac{1}{u-y_{i+1}} e_{i+1}^{(A)} =
    \frac{W^{(A)}_{i+1}(u)}{u} e_{i+1}^{(A)},
  \end{equation}
  where $W^{(A)}_{i+1}(u)$ is determined by
  \begin{equation}
    \label{eq:89}
    \frac{W^{(A)}_{i+1}(u)+u}{W^{(A')}_{i}(u)+u} = \frac{(u-y_i)^2}{(u-y_i)^2-1}.
  \end{equation}
\end{lemma}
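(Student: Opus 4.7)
The plan is to follow the strategy of Lemma \ref{lem:18} (and Lemma \ref{lem:29}), using a Brauer-type identity to bring $e_i^{(A')}$ into play and then applying the hypothesis on $W_i^{(A')}$. The key identity I would use is the realisation of the relation (\ref{item:16}) $\dot e_{i+1} \dot e_i \dot e_{i+1} = \dot e_{i+1}$ in our setting: since $a_i = a_{i+1} = 1$ in $A$, the middle $\dot e_i$ cannot be an endomorphism of $A$ and must be taken as $e_i^{(A')}$ on $A' = \sfs_{i+1} A$, forcing
\[
 e_{i+1}^{(A)} = \hat e_{i+1}^{(A')} \, e_i^{(A')} \, \hat e_{i+1}^{(A)}.
\]

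Substituting this into both copies of $e_{i+1}^{(A)}$ in $e_{i+1}^{(A)} \tfrac{1}{u-y_{i+1}} e_{i+1}^{(A)}$, the central block to analyse is
\[
 M := \hat e_{i+1}^{(A)} \, \tfrac{1}{u-y_{i+1}} \, \hat e_{i+1}^{(A')} \in \End(A').
\]
I would then rewrite each $\hat e_{i+1}$ as $\hat s_{i+1}\, e_{i+1}$ via (\ref{item:13}), commute the exposed $\hat s_{i+1}$'s past $\tfrac{1}{u-y_{i+1}}$ using \eqref{eq:56} and \eqref{eq:82}, and collapse the inner $\hat s_{i+1}^{(A)} \hat s_{i+1}^{(A')} = \idem_{A}$. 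The absorption relations $\dot e_{i+1}(y_{i+1}+y_{i+2}) = 0$ (and its left variant) then let me eliminate the $y_{i+1}$- and $y_{i+2}$-dependence adjacent to the $\hat e_{i+1}$'s, leaving only $y_i$, which commutes freely with all $\dot e_{i+1}$'s.

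At this stage the remaining expression takes the form of a sandwich $e_i^{(A')} \cdot R(u, y_i) \cdot e_i^{(A')}$ plus a remainder term containing the original unknown again. Using the hypothesis \eqref{eq:162} I would collapse the sandwich to a scalar multiple of $e_i^{(A')}$, and exactly as in the proof of Lemma \ref{lem:18}, the resulting identity, in which $e_{i+1}^{(A)}\tfrac{1}{u-y_{i+1}}e_{i+1}^{(A)}$ reappears on the right with a $u, y_i$-dependent coefficient, can be solved algebraically to yield $W_{i+1}^{(A)}(u)$ in terms of $W_i^{(A')}(u)$.

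The main obstacle is the bookkeeping of the intertwining manipulations. I expect that two applications of the $\hat s_{i+1}$-intertwining (one on each side of $\tfrac{1}{u-y_{i+1}}$) are required, each contributing an inverse factor $(u-y_i)^{-1}$ modulo a remainder correction. Combined with the $\pm 1$ shifts coming from the degenerate-affine relations $s_i y_i - y_{i+1}s_i = -1$ and $\hat s_i y_i - y_{i+1}\hat s_i = \hat e_i$, these two applications naturally produce the characteristic denominator $(u-y_i)^2 - 1 = (u-y_i-1)(u-y_i+1)$ appearing in \eqref{eq:89}, while the factor $(u-y_i)^2$ in the numerator reflects the two $\hat s_{i+1}$-swaps that brought us into $A'$ and back.
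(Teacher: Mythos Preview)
Your approach differs from the paper's in a way that creates a real gap. You decompose via relation (\ref{item:12}\ref{item:16}), writing $e_{i+1}^{(A)} = \hat e_{i+1}^{(A')} e_i^{(A')} \hat e_{i+1}^{(A)}$, and then try to reduce the middle block using $\hat s_{i+1}$-intertwining. But the formulas \eqref{eq:56}, \eqref{eq:82} for $\hat s_{i+1}$ swap $y_{i+1}$ with $y_{i+2}$, not with $y_i$, and their correction term is $\hat e_{i+1}$, not a scalar. The absorption relation at $i{+}1$ then only trades $y_{i+2}$ back for $-y_{i+1}$; nothing in your chain ever produces $y_i$. Your final paragraph invokes the un-hatted relation $s_i y_i - y_{i+1}s_i = -1$ to explain the $(u-y_i)^2-1$, yet your actual manipulation never uses $s_i$ --- so the crucial $\pm 1$ shifts have no source in your argument as written. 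Without $s_i$ you cannot pass from $y_{i+1}$ to $y_i$, and the expression you obtain will still contain an unknown of the form $e_{i+1}^{(A')}\frac{1}{u\pm y_{i+1}}e_{i+1}^{(A')}$ rather than the known $e_i^{(A')}\frac{1}{u-y_i}e_i^{(A')}$.

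The paper's argument exploits precisely the feature you overlook: since $a_i=a_{i+1}=1$, the \emph{un-hatted} $s_i$ exists in $A$. One inserts $s_i s_i=1$ and uses \eqref{eq:54}, \eqref{eq:81} to commute $s_i$ past $\tfrac{1}{u-y_{i+1}}$, which immediately introduces $\tfrac{1}{u-y_i}$ and the scalar corrections that assemble into $(u-y_i)^2-1$. The first resulting summand $e_{i+1}s_i\tfrac{1}{u-y_i}s_i e_{i+1}$ is handled by the conjugation identity $e_{i+1}^{(A)} = s_i \hat s_{i+1} e_i^{(A')} \hat s_{i+1} s_i$ (a consequence of (\ref{item:12}\ref{item:14})--(\ref{item:12}\ref{item:15})), and since $y_i$ commutes with $\hat s_{i+1}$, this reduces cleanly to $\frac{W_i^{(A')}(u)}{u}e_{i+1}^{(A)}$. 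The second summand, after using $y_{i+1}e_{i+1}=-y_{i+2}e_{i+1}$ and one more $s_i$-move, yields the recursive remainder $\frac{1}{(u-y_i)^2}\bigl(e_{i+1}+e_{i+1}\tfrac{1}{u-y_{i+1}}e_{i+1}\bigr)$, and solving gives \eqref{eq:89}. The moral is that the $s_i$, not the $\hat s_{i+1}$, is the operator that connects $y_{i+1}$ to $y_i$ here.
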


\begin{proof}
  The proof is a direct calculation. First, we use $s_i^2=1$ and \eqref{eq:81}:
  \begin{equation}
    \label{eq:80}
    \begin{aligned}
      &e_{i+1} \frac{1}{u-y_{i+1}}e_{i+1}  =  e_{i+1} s_i s_i
      \frac{1}{u-y_{i+1}}e_{i+1}\\
      &\qquad= e_{i+1} s_i \frac{1}{u-y_i} s_i e_{i+1} + e_{i+1} s_i
      \frac{1}{(u-y_{i+1})(u-y_i)} e_{i+1}.
    \end{aligned}
  \end{equation}
  Since $e_{i+1}=s_i \hat s_{i+1} e_i \hat s_{i+1} s_i$, and since
  $y_i$ commutes with $\hat s_{i+1}$, we can rewrite the first summand
  as
  \begin{equation}
    \label{eq:84}
    s_i \hat s_{i+1} e_i \frac{1}{u-y_i} e_i \hat s_{i+1} s_i =
    \frac{W_i (u)}{u} e_{i+1}.
  \end{equation}
  For the second summand of \eqref{eq:80}, since
  $y_{i+1}e_{i+1}=-y_{i+2}e_{i+1}$ and $y_{i+2}$ commutes with $s_i$,
  we can write
  \begin{equation}
    \label{eq:85}
    e_{i+1}s_i  \frac{1}{(u-y_{i+1})(u-y_i)} e_{i+1} = e_{i+1}
    \frac{1}{u-y_{i+1}} s_i \frac{1}{u-y_i} e_{i+1}
  \end{equation}
  and using \eqref{eq:54} we get
  \begin{equation}
    \label{eq:86}
    e_{i+1} s_i \frac{1}{(u-y_i)^2} e_{i+1} +
    e_{i+1}\frac{1}{(u-y_{i+1})(u-y_i)^2} e_{i+1}, 
  \end{equation}
  that by the commutativity of $y_i$ and $e_{i+1}$ is
  \begin{equation}
    \label{eq:87}
    \frac{1}{(u-y_i)^2} e_{i+1} + \frac{1}{(u-y_i)^2} e_{i+1}
    \frac{1}{u-y_{i+1}} e_{i+1}. 
  \end{equation}
  Putting all together, we obtain
  \begin{equation}
    \label{eq:88}
    \left( 1- \frac{1}{(u-y_i)^2} \right) e_{i+1} \frac{1}{u-y_{i+1}}
    e_{i+1} = \left( \frac{W_i(u)}{u} + \frac{1}{(u-y_i)^2} \right) e_{i+1},
  \end{equation}
  that is equivalent to our claim.
\end{proof}

We have then:

\begin{prop}
  \label{prop:7}
  Let $A \in \Seq_{r,t}$ and let $i$ be an index such that $a_i \neq a_{i+1}$. Then we have
  \begin{equation}
    \label{eq:255}
    e_{i}^{(A)} \frac{1}{u-y_{i}} e_{i}^{(A)} =
    \frac{W^{(A)}_{i}(u)}{u} e_{i}^{(A)},
  \end{equation}
  for some power series $W^{(A)}_i(u) \in \C[y_1,\ldots,y_{i-1}][[u^{-1}]]$. Moreover, the power series $W^{(A)}_i(u)$ depends only on $(a_1,\ldots,a_i)$, that is $W^{(A)}_i = W^{(A')}_i$ if the sequences $A$ and $A'$ coincide up to the index $i$.
\end{prop}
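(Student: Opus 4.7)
The plan is to proceed by induction on the index $i$. The base case $i = 1$ is immediate from the preceding material: relation \eqref{eq:39} provides $W_1^{(A)}(u) = W_1(u)$ when $a_1 = 1$, while Lemma~\ref{lem:18} provides $W_1^{(A)}(u) = W_1^*(u)$ when $a_1 = -1$. In either case $W_1^{(A)} \in \C[[u^{-1}]]$ and depends only on $a_1$.

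For the inductive step, assume the statement at index $i$ and fix $A \in \Seq_{r,t}$ with $a_{i+1} \neq a_{i+2}$. The combinatorial key is that $a_i$ coincides with exactly one of $a_{i+1}, a_{i+2}$, so exactly one of the two sequences $A$ and $B := \mathsf{s}_{i+1} A$ realises the ``middle agreement'' pattern $(a, a, -a)$ at positions $(i, i+1, i+2)$ for some sign $a \in \{\pm 1\}$. Both sequences share the same prefix $(a_1, \ldots, a_i)$, and they are related to each other via Lemma~\ref{lem:29} applied at position $i+1$. The argument splits into the case where $A$ itself carries the pattern $(a, a, -a)$ and the case where $B$ does.

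In the first case I would invoke Lemma~\ref{lem:28}. Although it is stated only for $(1,1,-1)$, its proof depends solely on the existence of $s_i^{(A)}$ and $\hat s_{i+1}^{(A)}$, on the factorisation $e_{i+1} = s_i\,\hat s_{i+1}\,e_i\,\hat s_{i+1}\,s_i$, and on relations that are symmetric under the $a \leftrightarrow -a$ involution, so both the statement and the identity \eqref{eq:89} apply verbatim to the $(-1,-1,1)$ pattern. Setting $A' := \mathsf{s}_{i+1} A$, the sequence $A'$ satisfies $a'_i \neq a'_{i+1}$ and agrees with $A$ on the first $i$ positions; the inductive hypothesis therefore produces $W_i^{(A')} \in \C[y_1,\ldots,y_{i-1}][[u^{-1}]]$ depending only on $(a_1,\ldots,a_i)$, and substitution into \eqref{eq:89} yields $W_{i+1}^{(A)} \in \C[y_1,\ldots,y_i][[u^{-1}]]$ (the only new variable introduced is $y_i$) depending only on $(a_1,\ldots,a_{i+1})$.

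In the second case, $B$ has the pattern $(a, a, -a)$ and the previous paragraph furnishes $W_{i+1}^{(B)} \in \C[y_1,\ldots,y_i][[u^{-1}]]$; then Lemma~\ref{lem:29} applied at position $i+1$ expresses $W_{i+1}^{(A)}$ through \eqref{eq:165} as the starred version $(W_{i+1}^{(B)})^*$, a transformation which preserves $\C[y_1,\ldots,y_i][[u^{-1}]]$ and introduces no new variable or dependence, so $W_{i+1}^{(A)}$ still depends only on $(a_1,\ldots,a_{i+1})$. The main obstacle is recognising that Lemma~\ref{lem:28} is genuinely sign-symmetric, so that the $(-1,-1,1)$ configuration does not require a separate ``starred'' argument; once that is verified, the rest of the induction is straightforward bookkeeping.
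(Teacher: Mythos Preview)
Your argument is correct and follows the same inductive scheme as the paper's proof: the base case via \eqref{eq:39} and Lemma~\ref{lem:18}, then the case split according to whether $a_i=a_{i+1}$, invoking Lemma~\ref{lem:28} directly in the first case and combining Lemma~\ref{lem:28} (for $\mathsf s_{i+1}A$) with Lemma~\ref{lem:29} in the second. Your write-up is in fact more careful than the paper's, since you explicitly address the sign-symmetry of Lemma~\ref{lem:28} (stated only for $(1,1,-1)$) and track why $W_i^{(A)}$ lies in $\C[y_1,\ldots,y_{i-1}][[u^{-1}]]$ and depends only on the prefix.
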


\begin{proof}
  We prove by induction on $i$ that \eqref{eq:255} holds for all $(r,t)$-sequences such that $a_i \neq a_{i+1}$. If $i=1$ then this follows from \eqref{eq:39} or \eqref{eq:57}. Now let us suppose that the claim holds for $i$ and consider $i+1$. If $a_i = a_{i+1}$ then we can apply Lemma \ref{lem:28}. Otherwise we can apply Lemma \ref{lem:28} and get the claim for $\sfs_{i+1} A$, and then deduce the result for $A$ using Lemma \ref{lem:29}.
\end{proof}

\section{A diagrammatical description}
\label{sec:graph-descr}

We give now a graphical description of the degenerate affine walled Brauer category, that we will use to describe a set of generators as a vector space.

To $A \in \Seq_{r,t}$ we assign a sequence of $r+t$ oriented points on a horizontal line, numbered from $1$ to $r+t$ from left to right. Each point can be oriented upwards or downwards: the point $i$ is oriented upwards if $a_i=1$ and downwards otherwise.

Given $A,A' \in \Seq_{r,t}$, a morphism $\phi \in
\Hom_{\uVB_{r,t}(\bomega)}(A,A')$ is a $\C$-linear combination of
strand diagrams that connect the point sequence corresponding to $A$
to the point sequence corresponding to $A'$. In each strand diagram
the strands are oriented according to the orientations of the
endpoints. The generating morphisms are:
\begin{gather}\label{eq:62}
  s^{(A)}_{i} = \quad
  \begin{tikzpicture}[xscale=0.7,baseline=(current bounding box.center)]
    \draw (1,0) node[below] {$a_1$} -- ++(0,1) node[above] {$a_1$};
    \node at (2,0.5) {$\cdots$};
    \draw (3,0) node[below] {$a_{i-1}$} -- ++(0,1) node[above] {$a_{i-1}$};
    \draw[->] (4,0) node[below] {$a_{i}$} -- ++(1,1) node[above] {$a_{i+1}$};
    \draw[->] (5,0) node[below] {$a_{i+1}$} -- ++(-1,1) node[above] {$a_{i}$};
    \draw (6,0) node[below] {$a_{i+2}$} -- ++(0,1) node[above] {$a_{i+2}$};
    \node at (7,0.5) {$\cdots$};
    \draw (8,0) node[below] {$a_{r+t}$} -- ++(0,1) node[above] {$a_{r+t}$} ;
  \end{tikzpicture}\\ \displaybreak[2]
  e^{(A)}_{i} = \quad
  \begin{tikzpicture}[xscale=0.7,baseline=(current bounding box.center)]
    \draw (1,0) node[below] {$a_1$} -- ++(0,1) node[above] {$a_1$};
    \node at (2,0.5) {$\cdots$};
    \draw (3,0) node[below] {$a_{i-1}$} -- ++(0,1) node[above] {$a_{i-1}$};
    \draw[->] (4,0) node[below] {$a_{i}$} arc(180:0:0.5 and 0.35) node[below] {$a_{i+1}$};
    \draw[<-] (4,1) node[above] {$a_{i}$} arc(180:360:0.5 and 0.35) node[above] {$a_{i+1}$};
    \draw (6,0) node[below] {$a_{i+2}$} -- ++(0,1) node[above] {$a_{i+2}$};
    \node at (7,0.5) {$\cdots$};
    \draw (8,0) node[below] {$a_{r+t}$} -- ++(0,1) node[above] {$a_{r+t}$} ;
  \end{tikzpicture}\\ \displaybreak[2]
  \hat s^{(A)}_{i} = \quad
  \begin{tikzpicture}[xscale=0.7,baseline=(current bounding box.center)]
    \draw (1,0) node[below] {$a_1$} -- ++(0,1) node[above] {$a_1$};
    \node at (2,0.5) {$\cdots$};
    \draw (3,0) node[below] {$a_{i-1}$} -- ++(0,1) node[above] {$a_{i-1}$};
    \draw[->] (4,0) node[below] {$a_{i}$} -- ++(1,1) node[above] {$a_{i}$};
    \draw[<-] (5,0) node[below] {$a_{i+1}$} -- ++(-1,1) node[above] {$a_{i+1}$};
    \draw (6,0) node[below] {$a_{i+2}$} -- ++(0,1) node[above] {$a_{i+2}$};
    \node at (7,0.5) {$\cdots$};
    \draw (8,0) node[below] {$a_{r+t}$} -- ++(0,1) node[above] {$a_{r+t}$} ;
  \end{tikzpicture}\\ \displaybreak[2]
  \hat e^{(A)}_{i} = \quad
  \begin{tikzpicture}[xscale=0.7,baseline=(current bounding box.center)]
    \draw (1,0) node[below] {$a_1$} -- ++(0,1) node[above] {$a_1$};
    \node at (2,0.5) {$\cdots$};
    \draw (3,0) node[below] {$a_{i-1}$} -- ++(0,1) node[above] {$a_{i-1}$};
    \draw[->] (4,0) node[below] {$a_{i}$} arc(180:0:0.5 and 0.35) node[below] {$a_{i+1}$};
    \draw[->] (4,1) node[above] {$a_{i+1}$} arc(180:360:0.5 and 0.35) node[above] {$a_{i}$};
    \draw (6,0) node[below] {$a_{i+2}$} -- ++(0,1) node[above] {$a_{i+2}$};
    \node at (7,0.5) {$\cdots$};
    \draw (8,0) node[below] {$a_{r+t}$} -- ++(0,1) node[above] {$a_{r+t}$} ;
  \end{tikzpicture}\\
  y^{(A)}_i = \quad
  \begin{tikzpicture}[xscale=0.7,baseline=(current bounding box.center)]
    \draw (1,0) node[below] {$a_1$} -- ++(0,1) node[above] {$a_1$};
    \node at (2,0.5) {$\cdots$};
    \draw (3,0) node[below] {$a_{i-1}$} -- ++(0,1) node[above] {$a_{i-1}$};
    \draw (4,0) node[below] {$a_{i}$} -- node[circle,fill,inner sep=2pt] {} ++(0,1) node[above] {$a_{i}$};
    \draw (5,0) node[below] {$a_{i+1}$} -- ++(0,1) node[above] {$a_{i+1}$};
    \draw (6,0) node[below] {$a_{i+2}$} -- ++(0,1) node[above] {$a_{i+2}$};
    \node at (7,0.5) {$\cdots$};
    \draw (8,0) node[below] {$a_{r+t}$} -- ++(0,1) node[above] {$a_{r+t}$} ;
  \end{tikzpicture}
\end{gather}
We did not draw the orientations of all the strands, but as we said
they are supposed to be oriented according to the
$(r,t)$-sequences. Moreover, for each of the first four generators we
have only depicted the case $a_i=1$; in the case $a_i=-1$ the
orientations are swapped.

Composition of morphisms is obtained by stacking
diagram vertically, from the bottom to the top. We will say that a strand is \emph{decorated} if there is at least one dot on it.

Let us now translate the defining relations of $\uVB_{r,t}(\bomega)$
into diagrams. Relations (\ref{item:1}),
(\ref{item:2}\ref{item:3}-\ref{item:4}), (\ref{item:8}\ref{item:9})
and (\ref{item:12}\ref{item:13}-\ref{item:16}) allow us exactly to
stretch undecorated strands. By relations (\ref{item:2}\ref{item:5})
and (\ref{item:8}\ref{item:10}-\ref{item:11}), dots can be moved
vertically as long as they do not step over crossings; by relations
(\ref{item:20}\ref{item:21}-\ref{item:22}), if they step over a
maximum or minimum, a sign appears. By relation (\ref{item:6}) we can
remove each closed undecorated circle and multiply by
$\omega_0$. Using Proposition \ref{prop:7} we can actually remove
every decorated circle on the cost of multiplying by the corresponding
term of the power series $W^{(A)}_{i}(u)$. The only relations we have
not yet considered are (\ref{item:17}\ref{item:18}-\ref{item:19}),
which we can use to make dots step over crossings.

By this discussion, it follows that we have a natural inclusion of the
\emph{walled Brauer category} $\uBr_{r,t}(\omega_0)$ of parameter
$\omega_0$ into $\uVB_{r,t}(\boldsymbol \omega)$. Here by an inclusion we mean that we have an inclusion of algebras for each $A \in \Seq_{r,t}$:
\begin{equation}
  \label{eq:256}
  \Theta_A : \Br_A(\omega_0) \into \VB_A(\bomega)
\end{equation}
and moreover an inclusion of $(\Br_A(\omega_0),\Br_{A'}(\omega_0))$-bimodules for all $A, A' \in \Seq_{r,t}$:
\begin{equation}
  \label{eq:256}
  \Hom_{\uBr_{r,t}(\omega_0)}(A,A') \into \Hom_{\uVB_{r,t}(\bomega)}(A,A'),
\end{equation}
where the bimodule structure on the target space is induced by $\Theta_A$ and $\Theta_{A'}$.

We will call a diagram representing a morphism $A \to B$ an \emph{$AB$-Brauer diagram}, or an $A$-Brauer diagram if $A=B$. It is \emph{undecorated} if there are no dots on it, and it is actually an element of
the corresponding walled Brauer algebra.  An $AB$-Brauer diagram is
called a \emph{monomial} if it is of the type
\begin{equation}
y_1^{\gamma_1} \cdots
y_{r+t}^{\gamma_{r+t}} D y_1^{\eta_1} \cdots y_{r+t}^{\eta_{r+t}},\label{eq:263}
\end{equation}
where $D$ is an undecorated $AB$-Brauer diagram. We say that such a
monomial is \emph{regular} if $\gamma_i=0$ whenever the point $a_i$ on
the bottom of $D$ is the left endpoint of a horizontal arc, and
$\eta_i \neq 0$ implies that the point $b_i$ on the top of $D$ is the
left endpoint of a horizontal arc.

For all pairs $A, B \in \Seq_{r,t}$ we define a filtration
\begin{equation}
  \{0\}=V_{-1} \subseteq V_0 \subseteq V_1 \subseteq \cdots\label{eq:257}
\end{equation}
 on $\Hom_{\uVB_{r,t}(\bomega)}(A,B)$ by letting $V_i$ be the vector span of all strand diagrams with at most $i$ dots.

For the purpose of this paper, we say that a $\C$-linear category is a \emph{filtered category} if all homomorphism spaces are filtered and composition of morphism respects the filtration. Analogously we say that a $\C$-linear category is a \emph{graded category} if all homomorphism spaces are graded and composition of morphisms respects the grading. Given a filtered category, if we replace all homomorphism spaces with the associated graded spaces we get the associated graded category.

In particular, the filtrations \eqref{eq:257} on all homomorphism spaces turn the degenerate affine walled Brauer category $\uVB_{r,t}(\bomega)$ into a filtered category. The associated graded category $\sfG$ has the same generators and the same relations of $\uVB_{r,t}(\bomega)$ except for (\ref{item:17}\ref{item:18}-\ref{item:19}), that have to be replaced by
\begin{itemize}
\item[(\ref{item:17}')] $\dot s_i y_i = y_{i+1} \dot s_i$ and $\dot s_i y_{i+1} = y_i \dot s_i$.
\end{itemize}

Given a $\C$-linear category $\calC$ with a finite set of object $\Ob(\calC)$, we let $\Mor(\calC) = \bigoplus_{M,N \in \Ob(\calC)} \Hom_\calC(M,N)$. We say that a subset $S \subset \Mor(\calC)$ \emph{generates} $\calC$ if it generates $\Mor(\calC)$ as a vector space.

\begin{prop}
  \label{prop:3}
  The regular monomials generate $\uVB_{r,t}(\boldsymbol \omega)$.
\end{prop}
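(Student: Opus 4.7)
The plan is to prove, by induction on the filtration degree (that is, on the total number of dots), that every morphism of $\uVB_{r,t}(\bomega)$ is a $\C$-linear combination of regular monomials. The base case, filtration degree zero, reduces to the analogous spanning statement for the walled Brauer category $\uBr_{r,t}(\omega_0)$: every undecorated morphism is a scalar multiple (involving a power of $\omega_0$) of a walled Brauer diagram, and such a diagram is a regular monomial with $\gamma_i = \eta_i = 0$.

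For the inductive step, starting from an arbitrary word $w$ in the generators I would first apply the straightening relations (\ref{item:1}), (\ref{item:2}\ref{item:3}-\ref{item:4}), (\ref{item:8}\ref{item:9}), and (\ref{item:12}\ref{item:13}-\ref{item:16}) to rewrite the undecorated part of $w$ as a genuine walled Brauer diagram $D$. Then, using (\ref{item:2}\ref{item:5}), (\ref{item:8}\ref{item:10}), and (\ref{item:8}\ref{item:11}), I would slide each dot vertically along its strand until it approaches either the top or the bottom of $D$. Whenever a dot must pass a crossing, I would apply (\ref{item:17}\ref{item:18}) or (\ref{item:17}\ref{item:19}); the resulting correction term contains strictly fewer dots and, by the induction hypothesis, is already a $\C$-linear combination of regular monomials.

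Next, I would bring the remaining dots into regular positions on the main term. Using (\ref{item:20}\ref{item:21}-\ref{item:22}), a dot sitting at the apex of a horizontal arc can be flipped to the opposite endpoint at the cost of a sign, which converts a dot on the left endpoint of a bottom cup into one on the right endpoint, and a dot on the right endpoint of a top cap into one on the left endpoint. A dot sitting at the top of a through-strand can be slid down along that strand to its bottom end, passing any intervening crossings as above and producing only lower-filtration corrections. None of these moves changes the filtration degree of the main term, and since there are only finitely many dot configurations on a fixed diagram, the normalization of the main term terminates.

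The main subtlety is the treatment of decorated closed loops, which may be produced for example when the $\hat e_i$-correction arising from (\ref{item:17}\ref{item:19}) glues a cup and a cap into a closed circle decorated by a chain of dots. Such a configuration is precisely of the form addressed by Proposition~\ref{prop:7}: it evaluates to a formal power series whose coefficients lie in $\C[y_1, \ldots, y_{i-1}]$ times a strictly simpler diagram. The residual $y_j$'s, all sitting at indices strictly smaller than that of the removed loop, can in turn be normalized by applying the same procedure recursively. The hardest part of the argument will be to organize this cascade of simplifications into a single well-founded induction, for instance by tracking a lexicographic pair consisting of the filtration degree and the number of cups and caps in $D$, so as to guarantee that finitely many reductions suffice to express the original morphism as a $\C$-linear combination of regular monomials.
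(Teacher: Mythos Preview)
Your argument is correct and is essentially the same as the paper's, just unpacked. The paper compresses your entire induction into two sentences by passing to the associated graded category $\sfG$ with respect to the dot-count filtration: in $\sfG$ the relations (\ref{item:17}\ref{item:18}--\ref{item:19}) become $\dot s_i y_i = y_{i+1}\dot s_i$, so dots slide freely across crossings with no correction terms, and the ``main term'' manipulation you describe becomes an honest rewriting showing that regular monomials span $\sfG$; the statement that a spanning set for the associated graded is a spanning set for the filtered object is then exactly your induction on filtration degree. Your explicit treatment of closed decorated loops via Proposition~\ref{prop:7} and your worry about organizing the cascade into a well-founded induction are precisely the content hidden in the paper's word ``straightforward'', and the associated-graded packaging is what makes that termination issue disappear.
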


\begin{proof} Let $\sfG$ be the associated graded category. It is
  straightforward to see that regular monomials generate $\sfG$,
  because in $\sfG$ dots can step over crossings thanks to the
  relation (\ref{item:17}'). It follows that regular monomials also
  generate $\uVB_{r,t}(\boldsymbol \omega)$.
\end{proof}

\section{The centre}
\label{sec:centre}

We are now going to determine the centre of the degenerate affine walled Brauer category. Our computations are analogous to the ones in \cite{2011arXiv1105.4207D}.

Let $R$ be the polynomial ring $R=\C[y_1,\ldots,y_{r+t}]$. Observe that for each pair $A,B \in \Seq_{r,t}$, the vector space $\Hom_{\uVB_{r,t}(\bomega)}(A,B)$ is an $R$-bimodule.

\begin{definition}[See also \cite{2011arXiv1105.4207D}]
  We say that a polynomial $p
  \in R$ satisfies \emph{$Q$-cancellation} with respect to the
  variables $y_1,y_2$ if
  \begin{equation}
    \label{eq:223}
    p(y_1,-y_1,y_3,\ldots,y_{rts}) = p(0,0,y_3,\ldots,y_{r+t}).
  \end{equation}
  Analogously we say that $p$ satisfies $Q$-cancellation with respect
  to the variables $y_i,y_j$ if $w\cdot p$ satisfies \eqref{eq:223},
  where $w \in \bbS_{r+t}$ is the permutation that exchanges $1$ with
  $i$ and $2$ with $j$ and $\bbS_{r+t}$ acts on $R$ permuting the
  variables.\label{def:8}
\end{definition}

We have then the following:
\begin{theorem}
  \label{thm:6}
  The centre of\/ $\uVB_{r,t}(\bomega)$ is isomorphic to the subring of $(\bbS_r \times \bbS_t)$-invariant polynomials $p \in R^{{\bbS_r \times \bbS_{t}}}$ which satisfy $Q$-cancellation with respect to the variables $y_r,y_{r+1}$. The isomorphism is given by
  \begin{equation}
    \label{eq:258}
    p \mapsto \sum_{A \in \Ob} (w_A \cdot p) \id_A
  \end{equation}
 where $\Ob$ is the set of objects of $\uVB_{r,t}(\bomega)$ (i.e.\ the set of $(r,t)$-sequences) and for each $A \in \Ob$ the element $w_A$ is a permutation such that $w_A \cdot (1^r, (-1)^t) = A$.
\end{theorem}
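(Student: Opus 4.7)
The plan is to establish the stated isomorphism in both directions: verifying that the map $p \mapsto z_p := \sum_{A \in \Ob}(w_A \cdot p)\,\id_A$ lands in the centre, and conversely that every central element arises this way. Well-definedness is immediate, since two choices of $w_A$ sending $(1^r,(-1)^t)$ to $A$ differ by right multiplication by an element of the stabilizer $\bbS_r \times \bbS_t$, which fixes $p$ by hypothesis. Injectivity follows from Proposition~\ref{prop:3}, which gives an embedding $R \into \End(A)$ for every $A$.

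For the centrality direction I would verify commutation of $z_p$ with each generator of $\uVB_{r,t}(\bomega)$ individually. Commutation with the $y_i^{(B)}$'s is trivial. For $s_i^{(B)}$ when $a_i = a_{i+1}$, the $(\bbS_r \times \bbS_t)$-invariance of $p$ becomes, after applying $w_B$, symmetry of $q := w_B \cdot p$ in $y_i, y_{i+1}$; a direct use of the relations (\ref{item:17}\ref{item:18}) then yields $s_i q = q s_i$. For generators involving a pair with $a_i \neq a_{i+1}$, I would first use invariance to arrange (by choosing $w_B$ appropriately) that $Q$-cancellation of $p$ in $y_r, y_{r+1}$ becomes $Q$-cancellation of $q$ in $y_i, y_{i+1}$, giving a decomposition $q = q_0 + (y_i + y_{i+1})g$ with $q_0$ free of $y_i, y_{i+1}$. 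Then $e_i q = e_i q_0 = q_0 e_i = q e_i$ using $e_i(y_i + y_{i+1}) = 0 = (y_i + y_{i+1})e_i$, and analogously $\hat e_j q = (s_j q)\,\hat e_j$. For $\hat s_j$, the difference $\hat s_j q - (s_j q)\,\hat s_j$ reduces to $(y_i + y_{i+1})(\hat s_j g - (s_j g)\,\hat s_j)$, and by Lemma~\ref{lem:15} the second factor is a sum of monomials of the form (polynomial) $\cdot\, \hat e_j \cdot$ (polynomial); multiplying by $(y_i + y_{i+1})$ on the left kills each term via relation (\ref{item:20}\ref{item:22}).

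For surjectivity, let $z = (z_A)_A$ be central. The crucial step is the \emph{centralizer claim} that $C_{\VB_A(\bomega)}(R) = R$ for every $A$. I would establish this as the affine walled Brauer analogue of the classical centralizer result for degenerate affine Hecke algebras: using the regular monomial basis of Proposition~\ref{prop:3} and passing to the associated graded category $\sfG$ (where dots slide freely through crossings), one shows that any element commuting with every $y_i$ has vanishing coefficients for all non-identity diagrams $D$. Granting the claim, each $z_A$ equals some $p_A \in R$. Commutation with $s_i^{(A)}$ (for $a_i = a_{i+1}$) forces $p_A$ to be symmetric in $y_i, y_{i+1}$; commutation with $e_i^{(A)}$ (for $a_i \neq a_{i+1}$) yields, by expanding $p_A$ in powers of $y_i, y_{i+1}$, using $e_i y_{i+1} = -e_i y_i$ and $y_{i+1} e_i = -y_i e_i$, and then invoking linear independence of the regular monomials $e_i y_i^c$ and $(-1)^c y_{i+1}^c e_i$ for $c \geq 1$, the $Q$-cancellation of $p_A$ in $y_i, y_{i+1}$; and commutation with $\hat s_j, \hat e_j$ gives the component relation $p_{s_j A} = s_j \cdot p_A$, ensuring that $p := w_A^{-1} \cdot p_A$ is well-defined and independent of $A$. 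The $s_i$-symmetries combine to $(\bbS_r \times \bbS_t)$-invariance of $p$, and any one instance of $Q$-cancellation on some $p_A$ transfers, by the invariance, to $Q$-cancellation of $p$ in the canonical pair $y_r, y_{r+1}$.

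The main obstacle I anticipate is the centralizer claim $C_{\VB_A(\bomega)}(R) = R$. Its proof requires a careful analysis of how multiplication by $y_i$ affects regular monomials when passing through horizontal arcs, where the naive ``index-transfer along strands'' breaks down and must be replaced by the relations (\ref{item:20}\ref{item:21}--\ref{item:22}) together with the power series $W^{(A)}_i(u)$ of Proposition~\ref{prop:7}. The strategy should mirror \cite{2011arXiv1105.4207D}, to which the paper explicitly defers for the parallel calculation on the affine Brauer side.
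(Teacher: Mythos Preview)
Your proposal is correct and follows essentially the same route as the paper: both defer the key centralizer claim $C_{\VB_A(\bomega)}(R) = R$ to the analogous argument in \cite{2011arXiv1105.4207D}, and then use commutation with $s_i$, $e_i$, and $\hat s_j$ to pin down the polynomial components exactly as in Lemmas~\ref{lem:41}, \ref{lem:43}, and \ref{lem:42}. Your decomposition $q = q_0 + (y_i + y_{i+1})g$ for the centrality direction is a mild repackaging of the paper's Lemma~\ref{lem:42} computation, and your use of $e_i$ to extract $Q$-cancellation is precisely Lemma~\ref{lem:43}.
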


\begin{proof}
  Let $Z$ be the centre of $\uVB_{r,t}(\bomega)$. Recall that $Z$ is
  by definition $\End(\underline 1)$ where $\underline 1$ is the
  identity endofunctor of $\uVB_{r,t}(\bomega)$. Hence by definition
  an element $f \in Z$ is an element
  \begin{equation}
    f \in \bigoplus_{A \in
    \Ob} \End_{ \uVB_{r,t}(\bomega)} (A)\label{eq:264}
  \end{equation}
 such that $f \phi = \phi f$
  for all morphisms $\phi \in \Hom_{\uVB_{r,t}(\bomega)}(A,A')$ and
  for all pairs $A,A' \in \Seq_{r,t}$.

  Let us pick a $A \in \Seq_{r,t}$. As in the proof of \cite[Theorem
  4.2]{2011arXiv1105.4207D}, it is easy to show that for such an $f$
  to commute with all endomorphisms of $A$ the component of $f$ in
  $\End_{ \uVB_{r,t}(\bomega)} (A)$ has to be a polynomial $p_A \in
  R$. In particular, we must have $f = \sum_{A \in \Ob} p_A \id_A$.

Let us now fix $A_0=(1^r,(-1)^t)$. Since $p_{A_0}$ has to be central in $\VB_{A_0}(\bomega)= \End_{ \uVB_{r,t}(\bomega)} ({A_0})$, it follows from Lemmas \ref{lem:41} and \ref{lem:43} below that $p_{A_0} \in R^{\bbS_{r} \times \bbS_{t}}$ and $p_{A_0}$ has to satisfy $Q$-cancellation with respect to $y_r,y_{r+1}$. On the other side, it follows by the same lemmas and the fact that the $s_i$'s and $e_i$'s generate $\VB_{A_0}(\bomega)$ that such a $p_{A_0}$ is central in $\VB_{A_0}(\bomega)$.

Finally, it follows from Lemma \ref{lem:42} that such an
 $f$ is central in the whole category if and only if $p_A = w_A \cdot p_{A_0}$ for all $A = w_A \cdot A_0$.
\end{proof}

As a corollary of the theorem (and of its proof), we can describe the centre of the degenerate affine walled Brauer algebras:

\begin{corollary}
  \label{cor:4}
  Let $A \in \Seq_{r,t}$ and $W_A \subset \bbS_{r+t}$ be the subgroup that fixes $A$. Let also $i,j$ be two indices such that $a_i \neq a_j$. Then the centre of $\VB_A(\bomega)= \End_{\uVB_{r,t}(\bomega)}(A)$ consists of the polynomials $p \in R^{W_A}$ that satisfy $Q$-cancellation with respect to the variables $y_i,y_{j}$.
\end{corollary}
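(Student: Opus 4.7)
The plan is to mirror the argument used for Theorem~\ref{thm:6}, but restricted to the single endomorphism algebra $\VB_A(\bomega)$ rather than applied to the entire category. First I would note that by the same argument used in the proof of Theorem~\ref{thm:6}, a central element $z$ of $\VB_A(\bomega)$ must lie in $R$: commutation with each $y_k^{(A)}$ forces $z$ to be a polynomial in the dot generators. Commutation with each $s_i^{(A)}$ (which exists precisely when $a_i = a_{i+1}$) will then yield $W_A$-invariance via Lemma~\ref{lem:41}, since these simple reflections generate $W_A$. Finally, commutation with each $e_i^{(A)}$ (existing precisely when $a_i \neq a_{i+1}$) imposes, via Lemma~\ref{lem:43}, $Q$-cancellation with respect to every adjacent pair $(y_i, y_{i+1})$ with $a_i \neq a_{i+1}$.

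The main step is then to reconcile the two conditions: $Q$-cancellation ``at every adjacent opposite-sign pair'' versus the corollary's condition ``at some fixed opposite-sign pair $(i,j)$''. For $p \in R^{W_A}$ I would show these are equivalent. The subgroup $W_A$ is the direct product of the symmetric group on the positions $\ell$ with $a_\ell = +1$ and the symmetric group on the positions $\ell$ with $a_\ell = -1$; in particular it acts transitively on the set of pairs $(k,l)$ with $a_k = +1$ and $a_l = -1$ (and analogously with signs swapped). Hence given $w \in W_A$ with $w(i) = k$, $w(j) = l$, the $W_A$-invariance of $p$ translates $Q$-cancellation at $(y_i, y_j)$ directly into $Q$-cancellation at $(y_k, y_l)$, and in particular into the finitely many adjacent conditions produced by the $e_i^{(A)}$'s.

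Conversely, any $W_A$-invariant $p \in R$ satisfying $Q$-cancellation at one (hence, by the transitivity argument above, every) opposite-sign pair automatically commutes with all the generators $s_i^{(A)}$, $e_i^{(A)}$, $y_k^{(A)}$ by Lemmas~\ref{lem:41} and~\ref{lem:43}, and is therefore central. Combining both directions gives the corollary. The main obstacle I expect is the careful verification of the combinatorial equivalence of the two $Q$-cancellation conditions under $W_A$-invariance; once that is in hand the result reduces to an application of Lemmas~\ref{lem:41} and~\ref{lem:43} to $A$, exactly as in the proof of Theorem~\ref{thm:6}.
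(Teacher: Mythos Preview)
Your proposal is correct and follows essentially the same approach as the paper: the paper derives the corollary directly from the argument in the proof of Theorem~\ref{thm:6}, which is exactly the strategy you outline (reduce to $p\in R$ as in that proof, then apply Lemmas~\ref{lem:41} and~\ref{lem:43} to the generators $s_i^{(A)}, e_i^{(A)}$ of $\VB_A(\bomega)$). Your transitivity argument for $W_A$ acting on opposite-sign pairs is the natural extra ingredient needed when passing from the special sequence $A_0=(1^r,(-1)^t)$ (where only one adjacent opposite-sign pair occurs) to a general $A$; the paper leaves this implicit.
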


\begin{lemma}
  \label{lem:41}
  Let $A \in \Seq_{r,t}$ with $a_i=a_{i+1}$. The polynomial $p \in R$ commutes with $s_i$ in $\VB_A(\bomega)$ if and only if $\sfs_i \cdot p = p$.
\end{lemma}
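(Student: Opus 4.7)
The strategy is to derive a Leibniz-type identity for $s_i$ acting on a polynomial and then match the two sides. Using relation (\ref{item:17}\ref{item:18}) together with relation (\ref{item:2}\ref{item:5}) (or, equivalently, Lemma~\ref{lem:15}), a short induction on $\deg p$ gives
\[
  s_i\, p \;=\; (\sfs_i \cdot p)\, s_i \;-\; \partial_i(p) \qquad \text{in }\VB_A(\bomega),
\]
where $\partial_i(p) := (p - \sfs_i \cdot p)/(y_i - y_{i+1}) \in R$ is the usual divided difference (well-defined since $y_i - y_{i+1}$ divides $p - \sfs_i \cdot p$).

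The ``if'' direction is then immediate: if $\sfs_i \cdot p = p$ then $\partial_i(p) = 0$ and the identity collapses to $s_i p = p s_i$.

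For the ``only if'' direction, assume $s_i p = p s_i$. Substituting in the Leibniz identity rearranges to
\[
  (\sfs_i \cdot p - p)\, s_i \;=\; \partial_i(p) \qquad \text{in } \VB_A(\bomega).
\]
The right-hand side lies in the polynomial subalgebra $R \subset \VB_A(\bomega)$, whereas the left-hand side is a polynomial multiple of the crossing $s_i$. In the language of Proposition~\ref{prop:3} these two elements are combinations of regular monomials supported on two distinct undecorated $A$-Brauer diagrams, namely $\id_A$ and the single crossing $s_i$. The linear independence of regular monomials built on different underlying diagrams then forces both sides to vanish separately, so in particular $\sfs_i \cdot p - p = 0$, as required.

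The main obstacle is precisely this last linear-independence step, since Proposition~\ref{prop:3} is only stated as a spanning result. The cleanest way around it is to pass to the associated graded category $\sfG$ of Section~\ref{sec:graph-descr}: there relation (\ref{item:17}') reads $s_i y_i = y_{i+1} s_i$ exactly, so taking the top graded piece (in the dot-filtration) of the equation $s_i p - p s_i = 0$ gives $(\sfs_i \cdot p - p)\, s_i = 0$ in $\sfG$, and in the graded picture the ``$\id_A$-sector'' and the ``$s_i$-sector'' of $\End_\sfG(A)$ are manifestly disjoint, forcing $\sfs_i \cdot p = p$. An alternative, if one prefers to avoid PBW-type arguments in the graded setting, is to invoke the Schur--Weyl-type action of Section~\ref{sec:acti-gl_n-repr}, which separates the polynomial sector from the crossing sector.
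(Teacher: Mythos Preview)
Your argument is the same as the paper's: both derive the identity $s_i\,p = (\sfs_i \cdot p)\,s_i - \partial_i(p)$ (the paper obtains it for $i=1$ from Lemma~\ref{lem:15}) and read off the equivalence from it. You are more explicit than the paper about the linear-independence input needed for the ``only if'' direction---the paper simply writes ``It follows that\ldots''---and your proposed workarounds are reasonable, though in the end the issue reduces to the embedding $R \hookrightarrow \VB_A(\bomega)$, which the paper tacitly uses (compare the analogous assertion in the proof of Lemma~\ref{lem:43}).
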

\begin{proof}
  For notational convenience let us suppose $i=1$. Let $p \in R$ and write
  \begin{equation}
    p= \sum_{a,b \in \N} y_1^a y_2^b p_{a,b}\qquad\text{with }p_{a,b} \in \C[x_3,\ldots,x_{r+t}]. \label{eq:224}
  \end{equation}
  Using \eqref{eq:55} and \eqref{eq:155} we get
  \begin{equation}
    \label{eq:225}
    s_1 y_1^a y_2^b = y_2^a y_1^b s_1 - \frac{y_1^a y_2^b - y_2^a y_1^b}{y_1-y_2}
  \end{equation}
  and hence
  \begin{equation}
    \label{eq:226}
    s_1 p = (\sfs_1 \cdot p) s_1 - \frac{p - \sfs_1 \cdot p}{y_1 - y_2}. 
  \end{equation}
  It follows that $p$ commutes with $s_1$ if and only if $\sfs_1 \cdot p =
  p$.
\end{proof}

\begin{lemma}\label{lem:43}
  Let $A \in \Seq_{r,t}$ with $a_i \neq a_{i+1}$. The polynomial $p \in R$ commutes with $e_i$ in $\VB_A(\bomega)$ if and only if it satisfies $Q$-cancellation with respect to $y_i,y_{i+1}$.
\end{lemma}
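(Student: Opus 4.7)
The plan is to use the annihilation relations (\ref{item:20}\ref{item:21}-\ref{item:22}), namely $(y_i+y_{i+1})e_i = 0$ and $e_i(y_i+y_{i+1}) = 0$, to bring $pe_i$ and $e_ip$ into directly comparable canonical forms. View $p$ as a polynomial in $y_i, y_{i+1}$ with coefficients in the subring $R' := \C[y_j : j \neq i, i+1]$; its elements commute with $e_i$ by (\ref{item:8}\ref{item:10}). Any $f \in R'[y_i, y_{i+1}]$ decomposes uniquely as $f = f(y_i, -y_i) + (y_i + y_{i+1})g$, so right multiplication by $e_i$ kills the second summand and yields
\begin{equation*}
pe_i = P(y_i)\,e_i \qquad \text{where} \quad P(t) := p(y_1, \ldots, y_{i-1}, t, -t, y_{i+2}, \ldots, y_{r+t}) \in R'[t].
\end{equation*}
The symmetric decomposition $f = f(-y_{i+1}, y_{i+1}) + (y_i + y_{i+1})h$ gives $e_ip = e_iP(-y_{i+1})$. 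Writing $P(t) = \sum_k p_k t^k$, and iterating the two one-step relations $e_iy_{i+1} = -e_iy_i$ and $y_{i+1}e_i = -y_ie_i$ to get $e_iy_{i+1}^k = (-1)^k e_iy_i^k$ and $y_i^ke_i = (-1)^k y_{i+1}^k e_i$, both sides simplify to
\begin{equation*}
pe_i = \sum_k (-1)^k p_k\, y_{i+1}^k e_i, \qquad e_ip = \sum_k p_k\, e_iy_i^k.
\end{equation*}

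Subtracting, the $k=0$ terms cancel and
\begin{equation*}
[p,e_i] = \sum_{k\geq 1} p_k\bigl((-1)^k y_{i+1}^k e_i - e_iy_i^k\bigr).
\end{equation*}
The converse is now immediate: if $p$ satisfies $Q$-cancellation with respect to $y_i, y_{i+1}$ then $P(t)=P(0)=p_0\in R'$ is constant in $t$, only the vanishing $k=0$ term remains, and $[p,e_i]=0$. For the forward direction, each summand $p_k y_{i+1}^k e_i$ and $p_k e_iy_i^k$ is a regular monomial in the sense of Proposition \ref{prop:3}: for $D = e_i^{(A)}$ the left endpoints of the horizontal arcs sit at position $i$ on both bottom and top, so powers of $y_{i+1}$ on the left of $e_i$ and powers of $y_i$ on the right are permitted, while the factors inside $p_k$ involve only through-strand variables (for which there is no regularity constraint). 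These regular monomials are pairwise distinct across different $k\geq 1$ and across the two ``types'' (bottom versus top), so their linear independence forces $p_k=0$ for all $k\geq 1$. This says $P(t)=p_0=P(0)$, which unpacks exactly to $p(\ldots,y_i,-y_i,\ldots)=p(\ldots,0,0,\ldots)$, i.e.\ $Q$-cancellation.

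The main obstacle is justifying the linear independence invoked above, since Proposition \ref{prop:3} only asserts that regular monomials span. I would close this gap by passing to the associated graded category $\sfG$ introduced just before Proposition \ref{prop:3}: there the dot--crossing relations become the homogeneous (\ref{item:17}'), under which the $y_j$'s are commuting indeterminates on which the $\dot s_i$'s act by permutations and which are annihilated at horizontal-arc endpoints by the $\dot e_i$'s. A standard diamond-lemma / PBW argument -- applied to the presentation of $\sfG$ as the walled Brauer category with the polynomial generators $y_1,\ldots,y_{r+t}$ adjoined, modulo exactly these homogeneous relations -- shows that regular monomials form a $\C$-basis of every morphism space of $\sfG$. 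Since the rewriting of $[p,e_i]$ above stays within a single filtration degree, the linear independence lifts back to $\VB_A(\bomega)$ and completes the argument.
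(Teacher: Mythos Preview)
Your argument follows the same strategy as the paper's: split off the constant part of $p$ in $y_i,y_{i+1}$, rewrite $pe_i$ and $e_ip$ using the annihilation relations (\ref{item:20}), and then appeal to linear independence of the resulting decorated-$e_i$ terms. You carry this out more carefully than the paper does; in fact the paper's literal assertion that the elements $\{e_1 y_1^a y_2^b p_{a,b},\, y_1^a y_2^b p_{a,b} e_1 : a+b>0\}$ are linearly independent is false as stated (already $e_1 y_2 = -e_1 y_1$), and what is really meant is precisely the reduction you perform. You are also right to flag the linear-independence step: Proposition~\ref{prop:3} only gives spanning, and the paper's proof of this lemma simply asserts the independence without proof. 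Your proposed closure via a diamond-lemma/PBW argument for the associated graded category $\sfG$ is the standard route and would work; so your proof is at least as complete as the paper's and more honest about its dependencies.
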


\begin{proof}
  Let us suppose $i=1$ and write $p$ as in \eqref{eq:224}. We have
  \begin{equation}
    \label{eq:229}
    e_1p-pe_1=  e_1 \bigg(\sum_{\substack{a,b \in \N\\a+b>0}} y_1^a y_2^b p_{a,b}\bigg)- \bigg(\sum_{\substack{a,b \in \N\\a+b>0}} y_1^a y_2^b p_{a,b}\bigg) e_1.
  \end{equation}
  The elements $\{e_1 y_1^a y_2^b p_{a,b},y_1^a y_2^b p_{a,b} e_1
  \suchthat a,b \in \N, a+b>0\}$ are linearly independent. Hence
  $\eqref{eq:229}=0$ if and only if both summands of the r.h.s.\
  vanish. This happens exactly when $p$ satisfies $Q$-cancellation
  with respect to $y_1,y_2$.
\end{proof}

\begin{lemma}
  \label{lem:42}
  Let $A \in \Seq_{r,t}$ with $a_i \neq a_{i+1}$. Then there exists a polynomial $q \in R$ such that $\hat s_i p = q \hat s_i$ if and only if $p$ satisfies $Q$-cancellation with respect to $y_i,y_{i+1}$. In this case we have $q= \sfs_i \cdot p$.
\end{lemma}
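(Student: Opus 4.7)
The plan is to imitate the proof of Lemma~\ref{lem:41}, but pushing $\hat s_i$ (rather than $s_i$) past $p$ using the commutation formulas \eqref{eq:53} and \eqref{eq:156} from Lemma~\ref{lem:15}. Since the $y_k$ with $k \neq i, i+1$ commute with $\hat s_i$ (by~(\ref{item:2}\ref{item:5})) and with $\hat e_i$ (by~(\ref{item:8}\ref{item:10})), I first decompose
\[
p = \sum_{a,b \geq 0} y_i^a y_{i+1}^b \, p_{a,b}, \qquad p_{a,b} \in \C[y_k : k\neq i,i+1],
\]
so the question reduces to analyzing each pure monomial $y_i^a y_{i+1}^b$ separately.

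For such a monomial I would use \eqref{eq:53} to move $\hat s_i$ past $y_i^a$ and then \eqref{eq:156} to move it past $y_{i+1}^b$, producing one main term $y_{i+1}^a y_i^b \, \hat s_i$ and two double sums of $\hat e_i$-terms. Collapsing these via the absorption identities $y_{i+1}\hat e_i = -y_i \hat e_i$ and $\hat e_i y_i = -\hat e_i y_{i+1}$ coming from~(\ref{item:20}), and using that $y_i, y_{i+1}$ commute, I expect the two telescoping sums to combine into a single clean formula of the shape
\[
\hat s_i \, y_i^a y_{i+1}^b \;=\; y_{i+1}^a y_i^b \, \hat s_i \;+\; (-1)^{a-1} \sum_{k=0}^{a+b-1} y_i^k \, \hat e_i \, y_{i+1}^{a+b-1-k}.
\]
Summing against the $p_{a,b}$ then yields $\hat s_i \, p = (\sfs_i \cdot p)\,\hat s_i + X(p)$ with $X(p) \in R\,\hat e_i\,R$.

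Finally I would invoke the linear independence, inside $\Hom_{\uVB_{r,t}(\bomega)}(A, \sfs_i A)$, of the two subspaces $R \hat s_i R$ and $R \hat e_i R$ (visible from the graphical calculus of Section~\ref{sec:graph-descr}, since $\hat s_i$ and $\hat e_i$ have distinct underlying undecorated diagrams), together with the linear independence of $\{y_i^k \hat e_i y_{i+1}^j\}_{k,j \geq 0}$ inside $R\hat e_i R$. This forces $q = \sfs_i \cdot p$ and reduces $\hat s_i p = q\hat s_i$ to the vanishing of the coefficient of $y_i^k\hat e_i y_{i+1}^j$ for every $k, j \geq 0$. Reindexing by $n = k+j+1$, this becomes $\sum_{a+b = n}(-1)^a p_{a,b} = 0$ for every $n \geq 1$, which (up to a global $(-1)^n$ per degree) is exactly the identity $p|_{y_{i+1}=-y_i} = p|_{y_i = y_{i+1} = 0}$, i.e.\ $Q$-cancellation with respect to $y_i, y_{i+1}$. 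The main obstacle will be the careful sign bookkeeping when absorbing $y$'s into $\hat e_i$ and collapsing the two telescoping sums; the linear independence statements, the other delicate point, follow at once from the graphical description (or from a spanning argument in the spirit of Proposition~\ref{prop:3} applied to the associated graded category).
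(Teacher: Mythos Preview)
Your proposal is correct and follows essentially the same route as the paper: decompose $p$ into monomials $y_i^a y_{i+1}^b p_{a,b}$, push $\hat s_i$ through via \eqref{eq:53} and \eqref{eq:156}, simplify the error terms with the absorption relations~(\ref{item:20}), and finish by linear independence. The only cosmetic difference is that the paper records the error terms as $(-1)^a\sum_{\ell=1}^{a+b}(-1)^\ell y_i^{a+b-\ell}\hat e_i\, y_i^{\ell-1}$ (with $y_i$ on both sides and an alternating sign), which is exactly your expression after one more application of $\hat e_i y_i = -\hat e_i y_{i+1}$; and the paper leaves the linear independence step implicit, whereas you spell it out.
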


\begin{proof}
  Again, let us suppose $i=1$, and let us write $p$ as in \eqref{eq:224}. Using \eqref{eq:53} and \eqref{eq:156} we get
  \begin{equation}
    \label{eq:227}
    \hats_1 y_1^a y_2^b = y_2^a y_1^b \hats_1 + (-1)^a \sum_{\ell=1}^{a+b} (-1)^\ell y_1^{a+b-\ell} \hate_1 y_1^{\ell -1}
  \end{equation}
  and hence
  \begin{equation}
    \label{eq:228}
    \hats_1 p = (\sfs_1 \cdot p) \hats_1 + \sum_{k \in \Z_{>0}} \left(\sum_{\ell =1}^{r} (-1)^\ell y_1^{r-\ell} \hate_1 y_1^{\ell -1} \right) \left( \sum_{b=0}^k (-1)^{k-b} p_{k-b,b}\right).
  \end{equation}
  Now the claim follows since $\sum_{b=0}^k (-1)^b
  p_{k-b,k}=0$ for every $k>0$ if and only if $p$ satisfies $Q$-cancellation with respect to $y_i,y_{i+1}$.
\end{proof}

\section{Action on $\gl_N$-representations}
\label{sec:acti-gl_n-repr}

In this section we will define an action of the degenerate affine walled Brauer category on $\gl_N$-representations.

Let us fix an integer $N$ and let $I=\{1,\ldots,N\}$. Let $\gl_N$ be
the Lie algebra of $N \times N$ matrices. We denote by $E_{ij}$ the
matrix that has a one at position $(i,j)$ and zeroes elsewhere.  We
let $\mathfrak h \subset \gl_N$ be the standard Cartan subalgebra of diagonal
matrices and $\gl_N = \frakn^+ \oplus \frakh \oplus \frakn^-$ the triangular decomposition of $\gl_n$. A basis of $\mathfrak h$ is given by the matrices $H_i=E_{ii}$. Let $\epsilon_i$ be the basis of $\mathfrak h^*$ dual
to $E_{ii}$.

The set of {\em roots} of $\gl_N$ is $\Pi=\{\epsilon_i-\epsilon_j
\suchthat i \neq j\}$; a root is positive if $i>j$, negative
otherwise. The set of positive (resp.\ negative) roots will be denoted by
$\Pi^+$ (resp.\ $\Pi^-$). The {\em simple roots} are $\alpha_i =
\epsilon_i - \epsilon_{i+1}$ for all $i=1,\ldots,N-1$. We will denote the
root vectors by $X_{ij}$ for all $i \neq j$ or $X_\alpha$ for $\alpha \in
\Pi$. 

On $\gl_N$ we consider the non-degenerate symmetric bilinear form
defined by
\begin{equation}
  \label{eq:5}
  (A|B)= \operatorname{tr} (AB).
\end{equation}
Notice that the set $\{H_i,X_\alpha \suchthat i \in I,
\alpha \in \Pi\}$ gives an orthonormal basis of $\gl_N$.

The {\em vector representation} of $\gl_N$ is the $N$-dimensional
vector space $V$ with basis $\{v_i \suchthat i \in I\}$ on which the
action of $\gl_N$ is given by
\begin{equation}
  \label{eq:7}
  X_{ij} v_k = \delta_{jk} v_i, \qquad H_i v_k = \delta_{ik} v_k,
\end{equation}
where $\delta_{ij}$ is the Kronecker delta.
The {\em dual vector representation} $V^*$ has basis $\{v^*_i
\suchthat i \in I\}$, and the action of $\gl_N$ is given explicitly by
\begin{equation}
  \label{eq:8}
  X_{ij} v^*_k = - \delta_{ik} v^*_j, \qquad H_i v^*_k = - \delta_{ik} v^*_k.
\end{equation}

We have linear homomorphisms $\sigma_{W,Z} : W \otimes Z \to Z \otimes
W$, $\tau_{W}: W \otimes W^* \to W \otimes W^*$, $\hat \tau_W : W \otimes
W^* \to W^* \otimes W$, where $W,Z$ are either $V$ or $V^*$ and
$V^{**}=V$, defined by
\begin{align}
    \sigma_{W,Z} :  w_i \otimes z_j & \longmapsto z_j \otimes
    w_i  \label{eq:10} \\
    \tau_{W}: w_i \otimes w^*_j & \longmapsto \delta_{ij} \sum_{k \in I}
    (w_k \otimes w_k^*) \label{eq:11}\\
    \hat \tau_{W} : w_i \otimes w^*_j & \longmapsto \delta_{ij} \sum_{k
      \in I} (w_k^* \otimes w_k) \label{eq:12}
\end{align}
where $w_i, z_i$ are either $v_i$ or $v^*_i$, and $v_i^{**}=v_i$. It is immediate to check that these are indeed homomorphisms of $\gl_N$-representations.

For a $(r,t)$-sequence $A$ we let $V^{\otimes A}=V^{a_1} \otimes
\cdots \otimes V^{a_{r+t}}$, where $V^1=V$ and $V^{-1}=V^*$. Let also
$M$ be a $\gl_N$-module. The linear homomorphisms \eqref{eq:10},
\eqref{eq:11} and \eqref{eq:12} induce the following endomorphisms of
$M \otimes V^{\otimes A}$ for all $1 \leq i,j \leq r+t-1$ such that
$a_i=a_{i+1}$, $a_{j} \neq a_{j+1}$:
\begin{align}
  s_i & = \id \otimes \id^{\otimes(i-1)} \otimes
  \sigma_{V^{a_i},V^{a_{i+1}}} \otimes
  \id^{\otimes(r+t-i-1)} \label{eq:14}\\
  e_j & = \id \otimes \id^{\otimes(i-1)} \otimes \tau_{V^{a_j}}
  \otimes \id^{\otimes(r+t-i-1)} \label{eq:15}
\end{align}
and the following homomorphisms $M \otimes V^{\otimes A} \to M \otimes
V^{\otimes A'}$ where $A'= \mathsf s_j A$ for some simple
transposition $\mathsf s_j \in \bbS_{r+t}$:
\begin{align}
  \hat s_i & = \id \otimes \id^{\otimes(i-1)} \otimes
  \sigma_{V^{a_i},V^{a_{i+1}}} \otimes
  \id^{\otimes(r+t-i-1)} \label{eq:13}\\
  \hat e_j & = \id \otimes \id^{\otimes(i-1)} \otimes
  \hat\tau_{V^{a_j}} \otimes \id^{\otimes(r+t-i-1)}. \label{eq:16}
\end{align}

Let $U(\gl_N)$ be the universal enveloping algebra of $\gl_N$. The
{\em Casimir operator} of $U(\gl_N) \otimes U(\gl_N)$ is
\begin{equation}
  \label{eq:6}
  \Omega = \sum_{i \in I} H_i \otimes H_i + \sum_{\alpha \in \Pi}
  X_\alpha \otimes X_{-\alpha}.
\end{equation}
The {\em Casimir element} of $U(\gl_N)$ is $C=m(\Omega)$, where
$m:U(\gl_N) \otimes U(\gl_N) \to U(\gl_N) $ is the multiplication.
Writing $\Omega=\sum x_{(1)} \otimes x_{(2)}$, we define for $0 \leq i < j \leq r+t$
\begin{equation}
  \label{eq:17}
  \Omega_{ij} = \sum 1 \otimes \cdots \otimes 1 \otimes x_{(1)} \otimes 1 \otimes \cdots \otimes 1 \otimes x_{(2)} \otimes 1 \otimes \cdots \otimes 1,
\end{equation}
where $x_{(1)}$ is at position $i$ and $x_{(2)}$ is at position $j$, starting with position $0$. Multiplication by $\Omega_{ij}$ defines an element of $\End_{\gl_N}(M \otimes V^{\otimes A})$, and we set for $1 \leq i \leq r+t$
\begin{equation}
  \label{eq:18}
  y_i = \sum_{0 \leq k < i} \Omega_{ki} + \frac{N}{2}.
\end{equation}

\begin{lemma}
  \label{lem:1}
  Let $M$ be a highest weight module. Let $A \in \Seq_{r,t}$ with
  $a_1=1,a_2=-1$. For all $k \in \N$ there exist $\omega_k(M) \in \C$
  with $\omega_0(M)=N$ such that $e_1 y_1^k e_1 = \omega_k(M)e_1$ as
  elements in $\End_{\gl_N}(M \otimes V^{\otimes A})$.
\end{lemma}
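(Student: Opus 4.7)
The plan is to reduce the identity to a scalar statement by exploiting that $e_1$ lands in the $\gl_N$-invariant subspace of $V\otimes V^*$, then invoke that highest weight modules have one-dimensional endomorphism ring.

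First I would note that both $e_1$ and $y_1$ act only on the first three tensor positions $M\otimes V\otimes V^*$ (positions $0,1,2$), as the identity elsewhere. So it suffices to prove the claim in $\End_{\gl_N}(M\otimes V\otimes V^*)$. Let $c=\sum_{k\in I} v_k\otimes v_k^*$, which spans the one-dimensional invariant subspace $(V\otimes V^*)^{\gl_N}$. A direct computation from \eqref{eq:11} shows $e_1(m\otimes v_i\otimes v_j^*)=\delta_{ij}\, m\otimes c$, so the image of $e_1$ is exactly $M\otimes \C c$, and hence the image of $e_1 y_1^k e_1$ is contained in $M\otimes \C c$.

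Next I would define a linear map $\phi_k:M\to M$ by requiring
\begin{equation*}
e_1 y_1^k(m\otimes c)=\phi_k(m)\otimes c\qquad\text{for all }m\in M,
\end{equation*}
which is well-defined since $e_1 y_1^k(m\otimes c)$ lies in $M\otimes \C c$. The crucial point is that $\phi_k$ is $\gl_N$-equivariant: the operators $e_1$ and $y_1=\Omega_{01}+N/2$ are $\gl_N$-equivariant by construction, the inclusion $\iota:M\hookrightarrow M\otimes V\otimes V^*$, $m\mapsto m\otimes c$, is $\gl_N$-equivariant since $c$ is invariant, and $\phi_k$ is the composite $\iota^{-1}\circ (e_1 y_1^k)\circ \iota$ after identifying $M\otimes \C c\cong M$.

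Now comes the key use of the hypothesis. Because $M$ is a highest weight module of some weight $\lambda$, it is a quotient of the Verma module $M(\lambda)$; thus the $\lambda$-weight space of $M$ is one-dimensional and spanned by the generating highest weight vector $m_0$. Any $\psi\in\End_{\gl_N}(M)$ must send $m_0$ to a vector in that weight space annihilated by $\frakn^+$, forcing $\psi(m_0)=\mu \cdot m_0$ for a scalar $\mu$; since $M=U(\gl_N)m_0$, this determines $\psi=\mu\cdot\id_M$. Hence $\End_{\gl_N}(M)=\C$ and $\phi_k=\omega_k(M)\cdot\id_M$ for some $\omega_k(M)\in\C$. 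Then
\begin{equation*}
e_1 y_1^k e_1(m\otimes v_i\otimes v_j^*)=\delta_{ij}\, e_1 y_1^k(m\otimes c)=\delta_{ij}\omega_k(M)\, m\otimes c=\omega_k(M)\, e_1(m\otimes v_i\otimes v_j^*),
\end{equation*}
so $e_1 y_1^k e_1=\omega_k(M)\, e_1$. The normalization $\omega_0(M)=N$ follows from $e_1^2(m\otimes v_i\otimes v_j^*)=\delta_{ij}\, e_1(m\otimes c)=N\delta_{ij}\, m\otimes c=N\, e_1(m\otimes v_i\otimes v_j^*)$. There is no real obstacle; the only subtle point is verifying that $\End_{\gl_N}(M)=\C$, which is standard once one recalls that in a highest weight module the highest weight space is one-dimensional.
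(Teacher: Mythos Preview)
Your proof is correct and takes essentially the same approach as the paper: the paper's composition $f\colon M \to M \otimes V \otimes V^* \xrightarrow{y_1^k} M \otimes V \otimes V^* \to M$ via coevaluation and evaluation is exactly your map $\phi_k$, and both arguments conclude by invoking $\End_{\gl_N}(M)=\C$ for highest weight modules. Your version simply spells out in more detail why this endomorphism ring is one-dimensional, which the paper leaves implicit.
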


\begin{proof}
  Consider the composition
  \begin{equation}
    \label{eq:19}
    f: M = M \otimes \C \longrightarrow M \otimes V \otimes V^* \stackrel{y_1^k}{\longrightarrow} M \otimes V \otimes V^* \longrightarrow M \otimes \C = M,
  \end{equation}
  where the first map is the canonical inclusion and the last one is
  the evaluation. Since $M$ is a highest weight module, $f$ must be a
  multiple, say $\omega_k(M)$, of the identity. It is then clear that
  $e_1 y_1^k e_1 = \omega_k(M)e_1$ as elements in $\End_{\gl_N}(M
  \otimes V \otimes V^*)$. Adding identities on the following tensor
  factors, the identity holds also for $M \otimes V^{\otimes A}$ as in the
  statement.

  The fact that $\omega_0(M)=N$ follows by elementary direct
  computation, and is true for every module $M$.
\end{proof}

We are ready to state our first main theorem; the computations needed for the proof are collected in Section~\ref{sec:proofs}.

\begin{theorem}
  \label{thm:1}
  Let $M$ be a highest weight module for $\gl_N$, and let $\boldsymbol
  \omega=(\omega_k(M))_{k \in \N}$ be the sequence of complex numbers
  given by Lemma \ref{lem:1}. Then the assignment $A \mapsto M \otimes
  V^{\otimes A}$ and formulas \eqref{eq:14}, \eqref{eq:15}, \eqref{eq:13},
  \eqref{eq:16}, \eqref{eq:18} define a functor\/ $\VB_{r+t}(\boldsymbol
  \omega) \to \catO(\gl_N)$. In particular, we have a
  well-defined action of\/ $\VB_A(\boldsymbol \omega)$ on $M \otimes
  V^{\otimes A}$ for every $(r,t)$-sequence $A$.
\end{theorem}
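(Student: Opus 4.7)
The plan is to verify, one by one, that each defining relation of $\uVB_{r,t}(\bomega)$ listed in Definition~\ref{def:2} is satisfied by the morphisms produced by the formulas \eqref{eq:14}--\eqref{eq:18}. Since $y_i$ is central in $\End_{\gl_N}(M\otimes V^{\otimes A})$ (it is a sum of Casimirs) and the other generators are manifestly $\gl_N$-equivariant, every morphism in the image will automatically lie in the correct Hom-space, so once all relations are checked, the assignment extends to a functor.

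First I would dispose of the purely ``walled-Brauer'' relations, namely \eqref{item:1}, (\ref{item:2}\ref{item:3}--\ref{item:4}), \eqref{item:6}, (\ref{item:8}\ref{item:9}), and (\ref{item:12}\ref{item:13}--\ref{item:16}). These involve only $s_i,\hat s_i,e_i,\hat e_i$ and amount to well-known identities for the $\gl_N$-module maps $\sigma$, $\tau$, $\hat\tau$ on mixed tensors of $V$ and $V^*$; for \eqref{item:6} the identity $\tau_V^2=N\tau_V$ on $V\otimes V^*$ suffices, and by Remark~\ref{rem:3} the other cases follow automatically. Relation \eqref{item:7} is exactly Lemma~\ref{lem:1}, which defines the parameters $\omega_k(M)$. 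Next I would dispatch the commutativities (\ref{item:2}\ref{item:5}), (\ref{item:8}\ref{item:10}--\ref{item:11}): for $j<i$ or $j>i+1$ the element $y_j$ operates on tensor positions disjoint from those affected by $s_i,e_i,\hat s_i,\hat e_i$, which gives (\ref{item:2}\ref{item:5}) and (\ref{item:8}\ref{item:10}) immediately; the identity $y_iy_j=y_jy_i$ is the classical fact that partial sums of Casimir tensors $\Omega_{ij}$ pairwise commute, coming from the centrality of the Casimir and the coassociativity of the coproduct of $U(\gl_N)$.

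The core of the proof is the verification of the ``Jucys--Murphy'' relations (\ref{item:17}\ref{item:18}--\ref{item:19}) and the anti-invariance (\ref{item:20}\ref{item:21}--\ref{item:22}). All of them reduce via the definition \eqref{eq:18} to the single computation
\[
 s_i\,y_i - y_{i+1}\,s_i \;=\; -\,\Omega_{i,i+1}\,s_i,\qquad  \hat s_i\,y_i - y_{i+1}\,\hat s_i \;=\; -\,\Omega^{(A')}_{i,i+1}\,\hat s_i,
\]
obtained because $s_i$ (resp.\ $\hat s_i$) conjugates $\Omega_{ki}$ into $\Omega_{k,i+1}$ for $k<i$ and $N/2$ is a scalar. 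The statement then follows from a direct evaluation of $\Omega$ on the two adjacent tensor factors: on $V\otimes V$ (and on $V^*\otimes V^*$) the Casimir acts as the flip $\sigma$, so $\Omega_{i,i+1}s_i=\sigma^2=1$, yielding \eqref{item:17}\ref{item:18}; on $V\otimes V^*$ (resp.\ $V^*\otimes V$) the Casimir acts as $-\tau_V$ (resp.\ $-\tau_V^*$), and a short bookkeeping of the evaluation/coevaluation maps shows $\tau_V^*\,\hat s_i=\hat e_i$, giving \eqref{item:17}\ref{item:19}. For \eqref{item:20}\ref{item:21} one writes
\[
 y_i+y_{i+1}\;=\;\sum_{k<i}\bigl(\Omega_{ki}+\Omega_{k,i+1}\bigr)\;+\;\Omega_{i,i+1}\;+\;N,
\]
notes that the evaluation pairing $V\otimes V^*\to\C$ (and hence $e_i$) is $\gl_N$-equivariant so that $e_i(\Omega_{ki}+\Omega_{k,i+1})$ vanishes for every $k<i$, and observes that $e_i\Omega_{i,i+1}=-e_i^2=-Ne_i$ on $V\otimes V^*$; summing gives $0$. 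The mirror identity \eqref{item:20}\ref{item:22} is analogous, and by Remark~\ref{rem:1} together with Remark~\ref{rem:3} the hat-versions follow formally.

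The main obstacle is purely organisational: there are many orientation patterns $(a_i,a_{i+1})\in\{\pm1\}^2$ and one must keep track of whether $\Omega_{i,i+1}$ acts as $\sigma$, $-\tau_V$ or $-\tau_V^*$ in each case, and likewise whether the source and target of $\hat s_i,\hat e_i$ are swapped. Once those bookkeeping steps are carried out (which is where the computations in Section~\ref{sec:proofs} will do most of the work), the theorem reduces to the handful of Casimir identities above.
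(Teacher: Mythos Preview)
Your overall strategy matches the paper's: verify each defining relation using the Casimir identities $\Omega|_{V\otimes V}=\sigma$, $\Omega|_{V\otimes V^*}=-\tau$, together with the equivariance of (co)evaluation. However, one step is incorrect as written. For the commutativities (\ref{item:2}\ref{item:5}) and (\ref{item:8}\ref{item:10}) you claim that when $j>i+1$ the element $y_j$ acts on tensor positions disjoint from $\{i,i+1\}$. This is false: by \eqref{eq:18} the sum defining $y_j$ contains $\Omega_{ij}$ and $\Omega_{(i+1)j}$, both of which hit position $i$ or $i+1$. So commutation with $\dot s_i$ and $e_i$ is not automatic.

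The fix is close at hand and you essentially already have it. For $\dot s_i$ one uses the conjugation identity $\dot s_i\Omega_{kj}=\Omega_{\sfs_i(k)j}\dot s_i$ (which you invoke later for relation~(\ref{item:17})), so $\dot s_i$ swaps $\Omega_{ij}\leftrightarrow\Omega_{(i+1)j}$ and commutes with their sum; this is the paper's Lemma~\ref{lem:2}. For $e_i$ one needs $e_i(\Omega_{ij}+\Omega_{(i+1)j})=(\Omega_{ij}+\Omega_{(i+1)j})e_i$; your own equivariance argument for relation~(\ref{item:20}) shows that both sides vanish separately (the paper does this by explicit calculation in Lemma~\ref{lem:4}). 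Once you patch this, your proof is complete, and your equivariance treatment of relation~(\ref{item:20}) is in fact cleaner than the paper's Lemma~\ref{lem:12}, which instead computes directly on the highest weight vector of $M$ for the term $k=0$.
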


\begin{proof}
  We need to show that the relations of the degenerate affine walled
  Brauer category are satisfied by definitions \eqref{eq:14},
  \eqref{eq:15}, \eqref{eq:13}, \eqref{eq:16}, \eqref{eq:18} for a
  highest weight module $M$. The relations are checked in details in
  Section~\ref{sec:proofs}.

  Relation (\ref{item:1}) is obvious, as are relations
  (\ref{item:2}\ref{item:3}-\ref{item:4}). Relation
  (\ref{item:2}\ref{item:5}) follows from Lemma \ref{lem:2}. Relation
  (\ref{item:6}) follows because $\omega_0(M)=N$. Relation
  (\ref{item:7}) is implicated by Lemma \ref{lem:1} and our choice of
  $\boldsymbol \omega$. Relation (\ref{item:8}\ref{item:9}) is
  trivial. Relation (\ref{item:8}\ref{item:10}) follows from Lemma
  \ref{lem:3} and Remark \ref{rem:1}. Relation
  (\ref{item:8}\ref{item:11}) is Lemma \ref{lem:6}.  Relation
  (\ref{item:12}\ref{item:13}) is straightforward, while relations
  (\ref{item:12}\ref{item:14}-\ref{item:16}) are shortly discussed as
  Lemma \ref{lem:7}. Relations (\ref{item:17}\ref{item:18}) are Lemma
  \ref{lem:8}, while relations (\ref{item:17}\ref{item:19}) are Lemma
  \ref{lem:11}.
\end{proof}

\section{Cyclotomic quotients}
\label{sec:cyclotomic-quotients}

We will consider in this section cyclotomic quotients of the degenerate affine walled Brauer category of level two.

Fix two positive integers $m,n \in \N$ and let $N=m+n$ and 
$I=\{1,\ldots,m+n\}$. Let $\gl_{m+n}$ be the complex general linear Lie algebra with its Levi subalgebra $\gl_m
\oplus \gl_n$; let $\mathfrak p = (\gl_m \oplus \gl_n) + \mathfrak
n^+$ be the corresponding standard parabolic subalgebra.

Let us set
\begin{equation}
  \label{eq:35}
  \rho = -\epsilon_2 -2 \epsilon_3- \cdots - (m+n-1) \epsilon_{m+n}.
\end{equation}

Let $\catO(m,n)=\catO_{\textit{int}}^\frakp(\gl_{m+n})$ be the
category of finitely generated $\gl_{n+m}$-modules that are locally
finite over $\mathfrak p$, semisimple over $\mathfrak h$, and have all
integral weights (when regarded as $\mathfrak{sl}_{m+n}
$-modules). This category is studied extensively in \cite{MR2781018}.
A full set of representatives for the isomorphism classes of
irreducible modules in $\catO(m,n)$ is given by the modules
$\{L(\lambda) \suchthat \lambda \in \Lambda(m,n)\}$, where
\begin{equation}
  \label{eq:36}
  \Lambda(m,n) = \left\{ \lambda \in \mathfrak h^* \, \left\vert\,
      \begin{aligned}
        &(\lambda + \rho, \epsilon_i -\epsilon_j) \in \Z \text{ for all } 1 \leq i,j \leq m+n,\\
        &(\lambda + \rho, \epsilon_1) > \cdots > (\lambda + \rho, \epsilon_m),\\
        &(\lambda + \rho, \epsilon_{m+1}) > \cdots > (\lambda + \rho,
        \epsilon_{m+n})
      \end{aligned}\right.
  \right\}
\end{equation}
and $L(\lambda)$ is the irreducible $\gl_{n+m}$-module of highest
weight $\lambda$. The module $L(\lambda)$ is the irreducible head
of the parabolic Verma module $M^\frakp(\lambda)$.
This parabolic
Verma module is also the largest quotient of the (non-parabolic) Verma
module $M(\lambda) \in \catO(\gl_{m+n})$ which lies in the parabolic subcategory $\catO(m,n)$.

Notice that the weights of the vector representation $V$ are $\epsilon_1,\ldots,\epsilon_{n+m}$, while  the weights
of $V^*$ are $-\epsilon_1,\ldots,-\epsilon_{n+m}$. By the tensor identity, the module $M(\lambda) \otimes V$ (resp.\ $M(\lambda) \otimes
V^*$) has a filtration with sections isomorphic to $M(\lambda +
\epsilon_j)$ (resp.\ $M(\lambda - \epsilon_j)$) for all $j \in I$. It
follows, by the characterization of the parabolic Verma modules and
because tensoring with $V$ and $V^*$ are endofunctors of $\catO(m,n)$ (see \cite[Lemma 4.3]{MR2781018}),
that $M^\frakp (\lambda) \otimes V$ has a filtration with sections
isomorphic to $M^\frakp(\lambda + \epsilon_j)$ for all $j$ such that
$\lambda+\epsilon_j \in \Lambda(m,n)$; similarly, $M^\frakp (\lambda)
\otimes V^*$ has a filtration with sections isomorphic to
$M^\frakp(\lambda -\epsilon_j)$ for all $j$ such that $\lambda -
\epsilon_j \in \Lambda(m,n)$.

For $\delta \in \Z$ we set
\begin{equation}
  \underline \delta= -\delta (\epsilon_1 +
  \cdots + \epsilon_m).\label{eq:37}
\end{equation}

\begin{lemma}
  \label{lem:13}
  Suppose $m,n\geq 1$ and $\delta \neq m$. Then there is an
  isomorphims of $\gl_{n+m }$-modules
  \begin{equation}
    M^\frakp(\udelta) \otimes V \cong M^\frakp(\udelta + \epsilon_1) \oplus M^\frakp(\udelta+\epsilon_{m+1}).\label{eq:9}
  \end{equation}
  This is also an eigenspace decomposition for the action of $y_1$,
  with eigenvalues
  \begin{equation}
    \label{eq:38}
    \beta_1 = -\delta + \frac{m+n}{2}\qquad \text{and}\qquad \beta_2 = \frac{n-m}{2}.
  \end{equation}
\end{lemma}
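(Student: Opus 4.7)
The plan is to combine the tensor product filtration of $M^\frakp(\udelta) \otimes V$ recalled just above the lemma with (i) a regularity analysis pinning down which sections appear, (ii) a central character argument for the splitting, and (iii) the standard Casimir trick for the eigenvalues.

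\textbf{Step 1: identifying the sections.} We already know $M^\frakp(\udelta) \otimes V$ has a filtration with sections $M^\frakp(\udelta + \epsilon_j)$ for those $j \in I$ with $\udelta + \epsilon_j \in \Lambda(m,n)$. Computing
\begin{equation*}
(\udelta + \rho, \epsilon_i) = \begin{cases} -\delta - (i-1) & 1 \leq i \leq m, \\ -(i-1) & m+1 \leq i \leq m+n, \end{cases}
\end{equation*}
one sees that shifting by $\epsilon_j$ produces two equal consecutive entries within one of the two blocks unless $j = 1$ or $j = m+1$, violating the strict decrease condition in the definition of $\Lambda(m,n)$. Exactly these two indices survive, yielding the two claimed sections.

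\textbf{Step 2: splitting via central characters.} Set $\lambda_1 := \udelta + \epsilon_1$ and $\lambda_2 := \udelta + \epsilon_{m+1}$. Their central characters are encoded by the $\bbS_{m+n}$-orbits of the multisets $\{(\lambda_k + \rho, \epsilon_i)\}_i$, which agree in $m+n-2$ entries and differ in positions $1$ and $m+1$, where they read $\{-\delta+1, -m\}$ and $\{-\delta, -m+1\}$ respectively. These two-element multisets coincide iff $\delta = m$. Under the hypothesis $\delta \neq m$ the two modules therefore have distinct central characters, hence lie in different blocks of $\catO(m,n)$, all $\Ext$-groups between them vanish, and the filtration splits as a direct sum.

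\textbf{Step 3: eigenvalues of $y_1$.} Using the coproduct identity $\Delta(C) = C \otimes 1 + 1 \otimes C + 2\Omega$, on a highest weight submodule $M^\frakp(\lambda_k) \subseteq M^\frakp(\udelta) \otimes V$ the operator $\Omega_{01}$ acts by the scalar
\begin{equation*}
\tfrac{1}{2}\bigl(c(\lambda_k) - c(\udelta) - c(\epsilon_1)\bigr),
\end{equation*}
where $c(\lambda)$ is the Casimir eigenvalue on $M(\lambda)$. A short substitution using the values of $(\udelta + \rho, \epsilon_j)$ above yields $\Omega_{01}|_{M^\frakp(\lambda_1)} = -\delta$ and $\Omega_{01}|_{M^\frakp(\lambda_2)} = -m$. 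Since $y_1 = \Omega_{01} + N/2$ and $N = m+n$, one reads off $\beta_1 = -\delta + (m+n)/2$ and $\beta_2 = (n-m)/2$; these are distinct exactly when $\delta \neq m$, so the direct sum is simultaneously the eigenspace decomposition of $y_1$.

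The main obstacle is the splitting step, but it reduces cleanly to the two-element multiset comparison above; the remaining computations are direct applications of standard formulas. The hypothesis $\delta \neq m$ enters in exactly one place (the equality of central characters) and simultaneously accounts for both the splitting and the separation of the $y_1$-eigenvalues.
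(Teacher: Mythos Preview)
Your proof is correct. The only substantive difference from the paper's argument is in Step~2: the paper computes the action of $\Omega$ (equivalently $y_1$) on the two sections first, finds the scalars $-\delta$ and $-m$, and then uses the distinctness of these eigenvalues of the $\gl_N$-endomorphism $y_1$ to split the short exact sequence directly; the eigenvalue computation thus does double duty. You instead invoke the Harish-Chandra central character (comparing the multisets $\{-\delta+1,-m\}$ versus $\{-\delta,-m+1\}$) to put the two sections in different blocks and split, and only afterwards compute the $y_1$-eigenvalues. Both routes are valid; the paper's is a touch more economical, while yours makes the block-theoretic reason for the splitting more explicit and would generalise more readily if one did not already have a convenient central endomorphism at hand.
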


\begin{proof}
  By the discussion above and the definition of $\udelta$, we have
  that $M^\frakp(\udelta) \otimes V$ has a filtration with parabolic
  Verma modules $M^\frakp(\udelta+\epsilon_1)$ and
  $M^\frakp(\udelta+\epsilon_{m+1})$. Since $\udelta + \epsilon_1 > \udelta + \epsilon_{m+1}$, the term $M^\frakp(\udelta + \epsilon_1)$ is a submodule, hence we have
  \begin{equation}
    \label{eq:221}
    0 \to M^\frakp(\udelta+ \epsilon_1) \to M^\frakp(\udelta) \otimes V \to M^\frakp(\udelta+ \epsilon_{m+1}) \to 0.
  \end{equation}

  Let $C$ be the Casimir element of $\gl_{m+m}$ as defined in Section
  \ref{sec:acti-gl_n-repr}.  By a straightforward computation, $C$
  acts as $\langle \lambda, \lambda+2\rho\rangle$ on the highest
  vector of $M^\frakp(\lambda)$, and hence on the whole module. In
  particular, as $V$ is the irreducible head of
  $M^\frakp(\epsilon_1)$, $C$ act as $\langle \epsilon_1,
  \epsilon_1+2\rho\rangle$ on $V$. Note that $\Delta(C) = C \otimes 1
  + 1 \otimes C + 2\Omega$. Hence using the action of the Casimir
  element we can compute the action of $\Omega$ on $M^\frakp(\udelta +
  \epsilon_1)$, that is given by the scalar
  \begin{equation}
    \label{eq:20}
    \begin{split}
      & \frac{1}{2} \Bigl( \langle \udelta + \epsilon_1, \udelta + \epsilon_1 + 2\rho\rangle - \langle \udelta , \udelta + 2\rho\rangle - \langle \epsilon_1,\epsilon_1+2\rho\rangle\Bigr)\\
      & \quad
      \begin{multlined}
        = \frac{1}{2} \Bigl( \langle \udelta, \udelta + 2\rho \rangle
        + \langle\epsilon_1,\epsilon_1+2\rho\rangle +2\langle\udelta,
        \epsilon_1 \rangle -\\- \langle \udelta , \udelta + 2\rho\rangle
        - \langle \epsilon_1,\epsilon_1+2\rho\rangle\Bigr)
      \end{multlined}
\\
      & \quad = \langle \udelta, \epsilon_1\rangle = -\delta,
    \end{split}
  \end{equation}
  so that the action of $y_1$ is given by $-\delta + \frac{m+n}{2}$
  
  Analogously, the action of $\Omega$ on $(M^\frakp(\udelta) \otimes V )/M^\frakp(\udelta + \epsilon_1) \cong M^\frakp (\udelta +
  \epsilon_{m+1})$ is given by the scalar
  \begin{equation}
    \label{eq:23}
    \begin{split}
      & \frac{1}{2} \Bigl( \langle \udelta + \epsilon_{m+1}, \udelta + \epsilon_{m+1} + 2\rho\rangle - \langle \udelta , \udelta + 2\rho\rangle - \langle \epsilon_1,\epsilon_1+2\rho\rangle\Bigr)\\
      & \quad
      \begin{multlined}
        = \frac{1}{2} \Bigl( \langle \udelta, \udelta + 2\rho \rangle
        + \langle\epsilon_{m+1},\epsilon_{m+1}+2\rho\rangle
        +2\langle\udelta, \epsilon_{m+1} \rangle- \\ - \langle \udelta ,
        \udelta + 2\rho\rangle - \langle
        \epsilon_1,\epsilon_1+2\rho\rangle\Bigr)
      \end{multlined}
\\
      & \quad = \frac{1}{2} \Bigl( 1 +
      \langle\epsilon_{m+1},2\rho\rangle +2\langle \udelta,
      \epsilon_{m+1}\rangle -1 - \langle \epsilon_1,2\rho\rangle
      \Bigr)\\
      & \quad = \langle \epsilon_{m+1}-\epsilon_1 , \rho\rangle +
      \langle \udelta, \epsilon_{m+1}\rangle = -m ,
    \end{split}
  \end{equation}
  and the action of $y_1$ is given by $-m +\frac{m+n}{2}$.

  Since the two factors \eqref{eq:20} and \eqref{eq:23} are different, they are indeed eigenvalues for the action of $\Omega$ and the exact sequence \eqref{eq:221} splits. 
\end{proof}

\begin{remark}
  \label{rem:9}
  If $\delta=m$ then the two eigenvalues $\beta_1$ and $\beta_2$ coincide. In this case, the short exact sequence \eqref{eq:221} does not split. The element $(y_1 -\beta_1)^2$ vanishes on $M^\frakp(\udelta) \otimes V$.
\end{remark}

\begin{lemma}
  \label{lem:14}
  Suppose $m,n\geq 1$ and $\delta \neq n$. Then there is an
  isomorphims of $\gl_{n+m }$-modules
  \begin{equation}
    M^\frakp(\udelta) \otimes V^* \cong M^\frakp(\udelta-\epsilon_{m+n}) \oplus M^\frakp(\udelta - \epsilon_{m}) .\label{eq:69}
  \end{equation}
  This is also an eigenspace decomposition for the action of $y_1$,
  with eigenvalues
  \begin{equation}
    \label{eq:70}
    \beta^*_1 = \frac{m+n}{2} \qquad \text{and}\qquad \beta^*_2 =  \delta + \frac{m-n}{2}.
  \end{equation}
\end{lemma}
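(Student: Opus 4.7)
The plan is to mirror the proof of Lemma~\ref{lem:13} (the analogous statement for $V$), since all the ingredients carry over symmetrically. First I would identify, among the shifts $\udelta - \epsilon_j$ for $j \in I$, which ones lie in $\Lambda(m,n)$. Since $\udelta$ has value $-\delta$ in the first $m$ coordinates and $0$ in the last $n$, and since the strict inequalities defining $\Lambda(m,n)$ force the decrement to occur at the end of each block, the only admissible shifts are $j=m$ and $j=m+n$. Combining this with the filtration property of $M^\frakp(\udelta) \otimes V^*$ (using the tensor identity and \cite[Lemma~4.3]{MR2781018}, exactly as in the previous lemma) produces a short exact sequence whose sections are $M^\frakp(\udelta - \epsilon_m)$ and $M^\frakp(\udelta - \epsilon_{m+n})$. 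A quick comparison $(\udelta - \epsilon_{m+n}) - (\udelta - \epsilon_m) = \epsilon_m - \epsilon_{m+n} = \alpha_m + \cdots + \alpha_{m+n-1} \in \Pi^+$ shows $\udelta - \epsilon_{m+n} > \udelta - \epsilon_m$, so $M^\frakp(\udelta - \epsilon_{m+n})$ is the submodule, giving
\begin{equation*}
  0 \to M^\frakp(\udelta - \epsilon_{m+n}) \to M^\frakp(\udelta) \otimes V^* \to M^\frakp(\udelta - \epsilon_m) \to 0.
\end{equation*}

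Next I would compute the action of $\Omega$ on each section using $\Delta(C) = C \otimes 1 + 1 \otimes C + 2\Omega$. The Casimir acts by $\langle \lambda, \lambda + 2\rho\rangle$ on any highest weight module $M^\frakp(\lambda)$. In particular, since $V^*$ is irreducible of highest weight $-\epsilon_{m+n}$, a short computation using $(\rho,\epsilon_k) = -(k-1)$ gives that $C$ acts on $V^*$ by $\langle -\epsilon_{m+n}, -\epsilon_{m+n} + 2\rho\rangle = 2m+2n-1$. Then on the two sections, noting $\langle \udelta, \epsilon_{m+n}\rangle = 0$ and $\langle \udelta, \epsilon_m\rangle = -\delta$, the routine simplification yields $\Omega$ acting by $0$ on $M^\frakp(\udelta - \epsilon_{m+n})$ and by $\delta - n$ on $M^\frakp(\udelta - \epsilon_m)$. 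Adding $N/2 = (m+n)/2$ as in the definition \eqref{eq:18} of $y_1$ gives precisely $\beta_1^* = (m+n)/2$ and $\beta_2^* = \delta + (m-n)/2$.

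Finally, the two eigenvalues coincide exactly when $\delta = n$, so the hypothesis $\delta \neq n$ ensures they are distinct. Since $\Omega$, and hence $y_1$, is $\gl_{m+n}$-equivariant, its generalised eigenspaces are submodules; distinct scalar actions on the two sections of a two-step filtration force the short exact sequence to split as a direct sum of the two eigenspaces, yielding the stated decomposition. No step here is a genuine obstacle: the only calculational care needed is in keeping track of the pairings with $\rho$ for the shift $\epsilon_m$ versus $\epsilon_{m+n}$; this is the point where the asymmetry between the two summands (and the asymmetry between this lemma and Lemma~\ref{lem:13}) arises.
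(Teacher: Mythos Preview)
Your proposal is correct and follows exactly the same approach as the paper's proof, which simply says ``analogous to the previous one'' and then carries out the two Casimir computations you describe (obtaining $\Omega$-eigenvalues $\delta - n$ and $0$, hence $y_1$-eigenvalues $\beta_2^*$ and $\beta_1^*$). Your write-up is in fact more detailed than the paper's, spelling out the identification of the admissible shifts and the weight comparison that determines which section is the submodule.
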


\begin{proof}
  The proof is analogous to the previous one. We just note that the
  highest weight of $V^*$ is $-\epsilon_{m+n}$ and compute the action
  of $\Omega$ on the summand $M^\frakp(\udelta-\epsilon_m)$:
  \begin{equation}
    \label{eq:44}
    \begin{split}
      & \frac{1}{2} \Bigl( \langle \udelta - \epsilon_m, \udelta - \epsilon_{m} + 2\rho\rangle - \langle \udelta , \udelta + 2\rho\rangle - \langle -\epsilon_{m+n},-\epsilon_{m+n}+2\rho\rangle\Bigr)\\
      & \quad \begin{multlined}= \frac{1}{2} \Bigl( \langle \udelta, \udelta + 2\rho \rangle - \langle\epsilon_m,-\epsilon_m+2\rho\rangle -2\langle\udelta, \epsilon_m \rangle -\\- \langle \udelta , \udelta + 2\rho\rangle + \langle \epsilon_{m+n},-\epsilon_{m+n}+2\rho\rangle\Bigr)
      \end{multlined}
      \\
      & \quad = -\langle \epsilon_m - \epsilon_{m+n},\rho\rangle -
      \langle \udelta, \epsilon_m\rangle =- n + \delta,
    \end{split}
  \end{equation}
  so that the action of $y_1$ is given by $\delta + \frac{m-n}{2}$,
  and on the summand $M^\frakp(\udelta-\epsilon_{m+n})$:
  \begin{equation}
    \label{eq:45}
    \begin{split}
      &
      \begin{multlined}
        \frac{1}{2} \Bigl( \langle \udelta - \epsilon_{m+n}, \udelta -
        \epsilon_{m+n} + 2\rho\rangle - \langle \udelta , \udelta +
        2\rho\rangle -\\- \langle
        -\epsilon_{m+n},-\epsilon_{m+n}+2\rho\rangle\Bigr)
      \end{multlined}
\\
      & \quad \begin{multlined}
        = \frac{1}{2} \Bigl( \langle \udelta, \udelta + 2\rho \rangle - \langle\epsilon_{m+n},-\epsilon_{m+n}+2\rho\rangle -2\langle\udelta, \epsilon_{m+n} \rangle -\\- \langle \udelta , \udelta + 2\rho\rangle + \langle \epsilon_{m+n},-\epsilon_{m+n}+2\rho\rangle\Bigr)
      \end{multlined} \\
      & \quad = - \langle \udelta, \epsilon_{m+n}\rangle = 0,
    \end{split}
  \end{equation}
  and the action of $y_1$ is given by $\frac{m+n}{2}$.
\end{proof}

\begin{remark}
  \label{rem:8}
  Also in this case, if $\delta=n$ then $\beta_1^* = \beta_2^*$ and instead of \eqref{eq:69} we have a short exact sequence that does not split. The element $(y_1 - \beta_1^*)^2$ vanishes on $M^\frakp(\udelta) \otimes V^*$.
\end{remark}

We define now the cyclotomic walled Brauer category of level two:

\begin{definition}
  \label{def:4}
  Let $r,t \in \N$ and fix a sequence $\boldsymbol \omega$ of complex
  parameters. Let also $\beta_1,\beta_2,\beta_1^*,\beta_2^*$ be
  complex numbers with $\beta_1\neq\beta_2$ and
  $\beta_1^*\neq\beta_2^*$. The {\em cyclotomic walled Brauer
    category} $\uVB_{r,t}(\boldsymbol \omega;
  \beta_1,\beta_2;\beta_1^*,\beta_2^*)$ is obtained imposing to the
  degenerate affine walled Brauer category $\uVB_{r,t}(\bomega)$ the following relations:
  \begin{align}
    (y_{A,1} - \beta_1)(y_{A,1}-\beta_2) = 0 & \qquad \text{for every
    } A \in \Seq_{r,t}
    \text{ with } a_1=1, \label{eq:48}\\
    (y_{A',1} - \beta^*_1)(y_{A',1}-\beta^*_2) = 0 & \qquad \text{for
      every } A'\in \Seq_{r,t} \text{ with } a'_1=-1. \label{eq:49}
  \end{align}
  
  If $A$ is an $(r,t)$-sequence, we define the {\em cyclotomic walled
    Brauer algebra}
  \begin{equation}
    \label{eq:47}
    \VB_A(\boldsymbol \omega; \beta_1, \beta_2; \beta_1^*,\beta_2^*)
    = \End_{\uVB_{r,t}(\boldsymbol \omega; \beta_1, \beta_2; \beta_1^*,
      \beta_2^*)}(A).
  \end{equation}
\end{definition}

\begin{remark}
  We remark that we really need to quotient out both \eqref{eq:48} and
  \eqref{eq:49} in order to be sure that we get a finite dimensional
  quotient. Moreover, we point out that it is important to first take
  the cyclotomic quotient of the whole category and then define the
  cyclotomic walled Brauer algebras to be the endomorphism algebras in
  the cyclotomic category: if we would define the cyclotomic algebras
  to be the cyclotomic quotients of the degenerate affine algebras by
  relations \eqref{eq:48} or \eqref{eq:49}, then they would not be in
  general finite dimensional.\label{rem:6}
\end{remark}

For general parameters $\bomega, \beta_1,\beta_2, \beta_1^*,
\beta_2^*$ we can not say anything about the cyclotomic quotient $\uVB_{r,t}(\bomega; \beta_1,\beta_2; \beta_1^*, \beta_2^*)$, which could even be trivial. However, if the parameters are chosen carefully, we will prove that the cyclotomic walled Brauer algebras are finite dimensional of dimension $2^{r+t} (r+t)!$, as one would expect.

We have the following consequence of the definition and of Theorem \ref{thm:1}:

\begin{corollary}
  \label{cor:1} Fix integers $r,t$ with $r\geq 1$, and let $A \in \Seq_{r,t}$. Fix also $\delta \in \Z$ and $m,n \geq 1$. Then the action of Theorem
  \ref{thm:1} factors through the cyclotomic quotient, defining a
  functor
  \begin{equation}\label{eq:260}
    \begin{aligned}
      \uVB_{r,t}(\bomega; \beta_1, \beta_2; \beta_1^*,\beta_2^*) &
      \longrightarrow \catO(m,n)\\
      A & \longmapsto M^{\frakp}(\udelta) \otimes V^{\otimes A}
    \end{aligned}
\end{equation}
and in particular an action of\/ $\VB_A(\boldsymbol \omega; \beta_1,
\beta_2; \beta_1^*, \beta_2^*)$ on $M^{\frakp}(\udelta) \otimes
V^{\otimes A}$, where $\boldsymbol \omega$ is as in Theorem
\ref{thm:1}, while $\beta_1,\beta_2,\beta_1^*,\beta_2^*$ are given by
Lemmas \ref{lem:13} and \ref{lem:14}.
\end{corollary}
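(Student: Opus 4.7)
The corollary asserts that the functor of Theorem \ref{thm:1}, whose target is $\catO(\gl_N) = \catO(\gl_{m+n})$ and which sends $A \mapsto M^{\frakp}(\udelta) \otimes V^{\otimes A}$, descends to the cyclotomic quotient and lands in the parabolic subcategory $\catO(m,n)$. The second assertion is essentially automatic: $M^\frakp(\udelta) \in \catO(m,n)$ and this subcategory is closed under tensoring with $V$ and with $V^*$ by \cite[Lemma 4.3]{MR2781018}. So the plan reduces to verifying, for every $A \in \Seq_{r,t}$, that the action of $y_1^{(A)}$ satisfies the cyclotomic relation \eqref{eq:48} when $a_1 = 1$ and \eqref{eq:49} when $a_1 = -1$.

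The key observation I would use is that, by definition \eqref{eq:18}, $y_1^{(A)}$ acts on $M^\frakp(\udelta) \otimes V^{\otimes A}$ as $\Omega_{01} + \tfrac{N}{2}$, which only touches the first two tensor factors $M^\frakp(\udelta) \otimes V^{a_1}$ and acts by the identity on the remaining $r+t-1$ factors. Consequently, any polynomial identity $P(y_1^{(A)}) = 0$ on the whole tensor product will hold as soon as the corresponding identity holds for the restricted action of $\Omega_{01} + \tfrac{N}{2}$ on $M^\frakp(\udelta) \otimes V^{a_1}$, which does not depend on the remaining entries of $A$. Thus it suffices to check the two quadratic relations at the level of $M^\frakp(\udelta) \otimes V$ and $M^\frakp(\udelta) \otimes V^*$ respectively.

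But those checks are exactly the content of Lemmas \ref{lem:13} and \ref{lem:14}. In the case $a_1 = 1$, Lemma \ref{lem:13} (assuming $\delta \neq m$) exhibits $M^\frakp(\udelta) \otimes V$ as an eigenspace decomposition for $y_1$ with eigenvalues $\beta_1$ and $\beta_2$, so $(y_1 - \beta_1)(y_1 - \beta_2)$ annihilates it; in the case $a_1 = -1$, Lemma \ref{lem:14} (assuming $\delta \neq n$) does the analogous job with eigenvalues $\beta_1^*$ and $\beta_2^*$. This verifies \eqref{eq:48} and \eqref{eq:49} for every object, so the functor factors through $\uVB_{r,t}(\bomega; \beta_1, \beta_2; \beta_1^*, \beta_2^*)$ as claimed, and restricting to endomorphisms of $A$ yields the promised action of $\VB_A(\bomega; \beta_1, \beta_2; \beta_1^*, \beta_2^*)$.

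There is no real obstacle; the only mild subtlety is the degenerate case $\delta = m$ or $\delta = n$, where the two eigenvalues coincide and the decomposition in the relevant lemma becomes a non-split short exact sequence. Here Definition \ref{def:4} technically assumes $\beta_1 \neq \beta_2$ and $\beta_1^* \neq \beta_2^*$, so the statement is really about the remaining range of $\delta$; nevertheless Remarks \ref{rem:9} and \ref{rem:8} still guarantee the weaker quadratic relation $(y_1 - \beta_1)^2 = 0$ (resp.\ $(y_1 - \beta_1^*)^2 = 0$), which is what the cyclotomic quotient reduces to in those limits.
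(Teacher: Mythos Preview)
Your argument is correct and is exactly the approach implicit in the paper, which presents the corollary as an immediate consequence of Theorem~\ref{thm:1} together with Lemmas~\ref{lem:13} and~\ref{lem:14} without writing out a proof. Your observation that $y_1^{(A)}=\Omega_{01}+\tfrac{N}{2}$ acts only on the first two tensor factors is the point that makes the reduction to those two lemmas work, and your handling of the degenerate cases $\delta=m,n$ via Remarks~\ref{rem:9} and~\ref{rem:8} is the appropriate caveat.
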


We will need a finite set of generators for the cyclotomic walled Brauer category:

\begin{prop}
  \label{prop:1}
  The regular monomials 
  \begin{equation}
y_1^{\gamma_1}\cdots y_{r+t}^{\gamma_{r+t}} B
  y_1^{\eta_1}\cdots y_{r+t}^{\eta_{r+t}} \qquad \text{with } \gamma_i,\eta_j \in
  \{0,1\} \text{ for all } i,j\label{eq:259}
\end{equation}
generate the cyclotomic walled Brauer
  category $\uVB_A(\boldsymbol \omega; \beta_1, \beta_2;\beta_1^*,\beta_2^*)$.
\end{prop}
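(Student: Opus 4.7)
The plan is to prove that in the cyclotomic quotient every regular monomial can be rewritten as a $\C$-linear combination of regular monomials whose dot exponents $\gamma_i,\eta_j$ all lie in $\{0,1\}$. Combined with Proposition~\ref{prop:3} this gives the claim. I would proceed by induction on the total degree $d=\sum_i(\gamma_i+\eta_i)$ of the monomial (refined lexicographically by, say, the largest index $i$ at which a power $\geq 2$ occurs), showing that any occurrence of an exponent $\geq 2$ can be eliminated modulo monomials of strictly smaller total degree.

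First I would handle position~$1$. If $\gamma_1\geq 2$ (or $\eta_1\geq 2$) on an object $A$ with $a_1=1$, relation \eqref{eq:48} gives $y_1^2=(\beta_1+\beta_2)y_1-\beta_1\beta_2$, which drops the exponent by at least one; the case $a_1=-1$ uses \eqref{eq:49} identically. Iterating reduces every exponent at position~$1$ to $\{0,1\}$ at the cost of terms of strictly smaller degree.

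For $i>1$ I would transport the dot down to position~$1$. If the offending $y_i$ sits on a vertical strand of $D$, I would commute it past the adjacent crossings of $D$ using the intertwining relations (\ref{item:17}\ref{item:18})–(\ref{item:17}\ref{item:19}); each such move replaces $s_i y_{i+1}$ (or $\hat s_i y_{i+1}$) by $y_i s_i$ (resp.\ $y_i \hat s_i$) plus an error term which contains one fewer dot and optionally an extra $\hat e_i$. Iterating brings the dot to position~$1$, where Step~1 applies, while every error term has strictly smaller total degree and hence is handled by the induction. If instead the offending $\eta_i$ sits at the left endpoint of a cap on top of $D$, then near that cap we have a configuration $\dot e_{i-1}y_i^{\eta_i}\dot e_{i-1}$ (after pushing $y_i$ through the cap using relations (\ref{item:20}\ref{item:21})–(\ref{item:22})); Proposition~\ref{prop:7} then evaluates the resulting decorated loop to $W^{(A)}_{i-1}(u)/u$, a power series whose coefficients are polynomials in $y_1,\dots,y_{i-2}$. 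The $\gamma_i$ case with $a_i$ a right cup-endpoint is symmetric, pushing dots down past the cup. This again produces only strictly lower-degree contributions.

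The main obstacle is bookkeeping: after each intertwining or loop-removal move, the resulting terms are typically not regular monomials (a slide may leave a dot on a strand that is the left endpoint of a horizontal arc, or on the wrong side of a crossing). To close the induction I would invoke the algorithm of Proposition~\ref{prop:3}—which rewrites arbitrary dotted diagrams as regular monomials using only the defining relations of $\uVB_{r,t}(\bomega)$—to re-express each error term as a linear combination of regular monomials of total degree at most that of the error term. Since every error term produced by Steps~1--3 has strictly smaller total degree than the monomial we started with, the induction closes and only regular monomials with $\gamma_i,\eta_j\in\{0,1\}$ remain.
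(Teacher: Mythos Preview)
Your induction-on-degree strategy is sound in spirit and can be made to work, but the execution contains a real error and misses the shortcut the paper takes.

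The paper does not argue in the filtered category at all. It passes to the associated graded category $\sfG'$, where relation~(\ref{item:17}') lets dots slide freely through crossings and the cyclotomic relations \eqref{eq:48}, \eqref{eq:49} become simply $y_1^2=0$. Given a regular monomial with some exponent $\geq 2$, one first isotopes the undecorated diagram so that every strand is leftmost at some height; the two offending dots can then be slid along their common strand to that height, where they sit on $y_1$ and the graded cyclotomic relation kills the monomial outright. Thus in $\sfG'$ the regular monomials with exponents in $\{0,1\}$ already span, and the filtered statement follows automatically. No error terms, no induction bookkeeping.

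Your direct filtered argument has a concrete gap in the ``cap'' case. You claim that when $\eta_i\geq 2$ sits on the left end of a cap one obtains a configuration $\dot e_{i-1}y_i^{\eta_i}\dot e_{i-1}$ and then invokes Proposition~\ref{prop:7} to evaluate a decorated loop. But no loop is formed: in a regular monomial the $y^\eta$ block sits \emph{above} $D$, so there is at most one $\dot e$ below $y_i^{\eta_i}$ and nothing above it. Proposition~\ref{prop:7} is inapplicable here. The correct reduction for $\eta_i\geq 2$ (and for $\gamma_i\geq 2$ with $i>1$) is exactly the same as at position~$1$: insert $\id = (\dot s_{i-1}\cdots\dot s_1)(\dot s_1\cdots\dot s_{i-1})$ next to $y_i^2$, commute $y_i^2$ through the $\dot s$'s using (\ref{item:17}\ref{item:18})--(\ref{item:17}\ref{item:19}) to reach $y_1^2$ on the appropriate object (where $a'_1=a_i$, so the correct cyclotomic relation applies), then re-regularise the lower-degree debris via Proposition~\ref{prop:3}. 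No case distinction between through-strands and caps is needed, and no crossings ``of $D$'' are involved---you are inserting new crossings, not commuting through existing ones.

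So: fix the cap case as above and your argument goes through; but the associated-graded trick the paper uses is both shorter and avoids all of this bookkeeping.
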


\begin{proof}
  As in the proof of Proposition \ref{prop:3}, it is enough to prove
  the statement for the associated graded category $\sfG'$ (since the
  filtration on the degenerate affine walled Brauer category descends
  to a filtration on the cyclotomic quotient). Of course all regular
  monomials generate the cyclotomic quotient by Proposition
  \ref{prop:3}. Consider a regular monomial: we can move the
  strands so that every strand at some level happens to be the
  leftmost strand. Now if some $\gamma_i$ or $\eta_i$ is bigger than
  $1$, then there are at least two dots on some strand. Using relations (\ref{item:17}') for $\sfG'$, we can move the two dots along the strand until they reach the level at which there are no other strands on their left. By the graded cyclotomic relation, this monomial is zero in the cyclotomic quotient.
\end{proof}

We will call the elements \eqref{eq:259} \emph{cyclotomic regular monomials}. We are now ready to prove our second main result:

\begin{theorem}
  \label{thm:2}
  Let $m,n,r,t,\delta$ be integer numbers with $m,n,r \geq 1$, $t \geq 0$  and $m,n \geq r+t$. Let $\bomega=\bomega(M^\frakp(\udelta))$ be given by Lemma~\ref{lem:1} and
  \begin{align}
    \label{eq:265}
    \beta_1 & = - \delta + \frac{m+n}{2}, &\beta_2 &= \frac{n-m}{2},\\
    \beta_1^* &= \frac{m+n}{2}, & \beta_2^* &= \delta+ \frac{m-n}{2}
  \end{align}
  as given by Lemmas~\ref{lem:13} and \ref{lem:14}.

  Then the cyclotomic regular $A$-monomials of Proposition
  \ref{prop:1} form a basis of $\VB_A(\boldsymbol \omega;
  \beta_1,\beta_2; \beta_1^*,\beta_2^*)$ and we have an isomorphism
  of algebras
  \begin{equation}
    \label{eq:40}
    \VB_A(\boldsymbol \omega; \beta_1, \beta_2; \beta_1^*, \beta_2^*) \cong \End_{\gl_{n+m}}
    (M^\frakp(\udelta) \otimes V^{\otimes A}),
  \end{equation}
  In particular $\dim_\C \VB_A(\boldsymbol \omega; \beta_1, \beta_2; \beta_1^*, \beta_2^*) = 2^{r+t} (r+t)!$
\end{theorem}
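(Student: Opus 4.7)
The plan is to prove the theorem by sandwiching $\dim_\C \VB_A(\bomega;\beta_1,\beta_2;\beta_1^*,\beta_2^*)$ between a combinatorial upper bound coming from Proposition~\ref{prop:1} and a matching lower bound coming from the action on $M^\frakp(\udelta)\otimes V^{\otimes A}$ provided by Corollary~\ref{cor:1}. First I would invoke Proposition~\ref{prop:1} to bound the dimension of the cyclotomic algebra from above by the number of cyclotomic regular $A$-monomials $y_1^{\gamma_1}\cdots y_{r+t}^{\gamma_{r+t}}\, D\, y_1^{\eta_1}\cdots y_{r+t}^{\eta_{r+t}}$. Organizing them by the underlying walled Brauer diagram $D$: if $D$ has $k$ horizontal arcs, the regularity conditions force $\gamma_i=0$ on $k$ of the bottom positions and $\eta_i=0$ on $r+t-k$ of the top positions, leaving $2^{r+t-k}\cdot 2^k = 2^{r+t}$ choices of exponents per diagram. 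Summing over all $(r+t)!$ walled Brauer diagrams gives the bound $\dim_\C \VB_A(\bomega;\beta_1,\beta_2;\beta_1^*,\beta_2^*) \leq 2^{r+t}(r+t)!$.

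Next I would compute $\dim_\C \End_{\gl_{m+n}}(M^\frakp(\udelta)\otimes V^{\otimes A})$ by induction on $r+t$. Generalising Lemmas~\ref{lem:13} and \ref{lem:14} to the parabolic Vermas appearing in the intermediate filtration, the hypothesis $m,n\geq r+t$ should ensure that at each step tensoring by $V$ or $V^*$ produces a direct sum of parabolic Vermas with pairwise distinct central characters (so the short exact sequences split) and that $\Hom_{\gl_{m+n}}(M^\frakp(\mu),M^\frakp(\nu))=\delta_{\mu\nu}\C$ throughout, because the weights stay in a sufficiently generic region of $\Lambda(m,n)$. Writing the resulting decomposition as $M^\frakp(\udelta)\otimes V^{\otimes A}\cong \bigoplus_\mu m_\mu M^\frakp(\mu)$, one obtains $\dim_\C \End_{\gl_{m+n}}(M^\frakp(\udelta)\otimes V^{\otimes A}) = \sum_\mu m_\mu^2$. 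A combinatorial identification of pairs of admissible paths from $\udelta$ ending at the same weight $\mu$ with walled Brauer diagrams (decorated on the regularity-allowed strands) should match $\sum_\mu m_\mu^2$ exactly to $2^{r+t}(r+t)!$.

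It then remains to prove that $\Psi$ is surjective onto this endomorphism algebra. I would argue this by induction on $r+t$ using the explicit formulas of Section~\ref{sec:acti-gl_n-repr}: the inductive hypothesis gives surjectivity for the first $|A|-1$ factors, and the additional generators $s_i,e_i,\hat s_i,\hat e_i,y_i$ acting on the newly appended factor are exactly the natural transformations of the projective functors $-\otimes V$ and $-\otimes V^*$ needed to produce every matrix unit in each block $M_{m_\mu}(\C)$ of the target; the $y_i$'s provide the projections onto the isotypic components via their distinct eigenvalues $\beta_1\neq\beta_2$ and $\beta_1^*\neq\beta_2^*$, while the $\hat s_i$-type generators intertwine the resulting summands. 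Combining surjectivity with the two dimension computations then forces $\dim_\C \VB_A = \dim_\C \End = 2^{r+t}(r+t)!$, so $\Psi$ is the desired isomorphism and the spanning set of cyclotomic regular monomials is actually a basis.

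The main obstacle is the decomposition step: proving by induction that every intermediate tensor product decomposes as a direct sum of parabolic Vermas with no $\Hom$-relations between the summands requires a careful combinatorial control of the weights $\udelta+\sum_k a_k\epsilon_{j_k}$ under the hypothesis $m,n\geq r+t$, together with the precise matching $\sum_\mu m_\mu^2 = 2^{r+t}(r+t)!$. Surjectivity of $\Psi$ is also delicate — it encodes most of the representation-theoretic content — but once the decomposition of $M^\frakp(\udelta)\otimes V^{\otimes A}$ is in hand it reduces to explicit manipulations with the generators of $\VB_A$ acting block by block on the target matrix algebra.
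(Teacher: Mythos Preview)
Your overall sandwich strategy is sound, and the upper bound via counting cyclotomic regular monomials is exactly what the paper does. But the other two steps diverge from the paper in ways that create real difficulties.

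For $\dim_\C \End_{\gl_{m+n}}(M^\frakp(\udelta)\otimes V^{\otimes A})$, the paper does \emph{not} attempt to decompose the module as a direct sum of parabolic Vermas. Instead it uses adjunction: since $V^{\otimes A}\otimes (V^{\otimes A})^*$ and $V^{\otimes(r+t)}\otimes (V^{\otimes(r+t)})^*$ are isomorphic $\gl_{m+n}$-modules, one gets $\End(M^\frakp(\udelta)\otimes V^{\otimes A})\cong \End(M^\frakp(\udelta)\otimes V^{\otimes(r+t)})$ as vector spaces, and the dimension of the latter is the well-known $2^{r+t}(r+t)!$ from the Arakawa--Suzuki / level-two degenerate Hecke picture. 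Your proposed route needs the intermediate tensor products to split into parabolic Vermas with pairwise distinct central characters, but central characters only separate $W$-orbits of $\lambda+\rho$; after a few steps the weights $\udelta+\sum_k(\pm\epsilon_{j_k})$ can easily land in the same linkage class, so the splitting is not automatic and $\Hom(M^\frakp(\mu),M^\frakp(\nu))=\delta_{\mu\nu}\C$ can fail. The hypothesis $m,n\geq r+t$ only keeps the weights inside $\Lambda(m,n)$; it does not force genericity in the sense you need.

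More importantly, the paper proves the \emph{opposite} half of the sandwich: it shows $\Psi$ is \emph{injective}, not surjective. The argument is concrete and avoids any structural analysis of $\End$. One puts the PBW filtration-by-length grading on $M^\frakp(\udelta)$, observes from \eqref{eq:50} that the degree-raising part of $y_h$ on $z\otimes v^{\otimes A}_b$ is governed by $\Omega_{0h}$ alone, and then, for each cyclotomic regular monomial $\aleph$, manufactures a labelling of the endpoints of $\aleph$ and a test vector $z\otimes v^{\otimes A}_{b(\aleph)}$ such that pairing $\aleph(z\otimes v^{\otimes A}_{b(\aleph)})$ against a specific PBW monomial $X_{1,\tau_1}\cdots X_{r+t,\tau_{r+t}}\,z\otimes v^{\otimes A}_{c(\aleph)}$ isolates the coefficient of $\aleph$. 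Running this over monomials ordered by the number of decorated arcs shows the spanning set acts linearly independently, hence is a basis and $\Psi$ is injective; surjectivity then drops out of the dimension count. Your inductive surjectivity sketch, by contrast, presupposes exactly the block decomposition whose validity is in doubt, and even granting it, producing all matrix units from the generators is not a routine manipulation.
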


Before proving the theorem, let us state the following important corollary:

\begin{corollary}
  \label{cor:2}
  With the hypotheses of Theorem \ref{thm:2}, the cyclotomic walled Brauer algebra $\VB_A(\boldsymbol \omega; \beta_1, \beta_2; \beta_1^*, \beta_2^*)$ inherits a grading and a graded cellular algebra structure, where the graded decomposition numbers are given by parabolic Kazhdan-Lusztig polynomials of type $A$.
\end{corollary}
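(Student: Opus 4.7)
The approach is to transport the structure from the category $\catO(m,n)$ via the isomorphism of Theorem~\ref{thm:2}, invoking the graded/diagrammatic description of parabolic category $\catO$ for two-block Levi subalgebras of type $A$ established by Brundan and Stroppel in \cite{MR2781018}. By that work, $\catO(m,n)$ is equivalent to the category of finite-dimensional modules over a Koszul graded quasi-hereditary cellular algebra $K = K^{m+n}_{m,n}$, the so-called generalized Khovanov arc algebra.

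First, I would iterate Lemmas~\ref{lem:13} and~\ref{lem:14} to see that under the hypothesis $m, n \geq r+t$ the module $X = M^\frakp(\udelta) \otimes V^{\otimes A}$ decomposes as a direct sum of parabolic Verma modules $M^\frakp(\mu)$, with none of the intermediate weights meeting walls. Each such summand corresponds, under the Brundan-Stroppel equivalence, to a cell (standard) module of $K$ and admits a canonical graded lift unique up to overall grading shift, so $X$ lifts to a graded $K$-module $\tilde X$. Via the isomorphism of Theorem~\ref{thm:2},
\begin{equation*}
\VB_A(\bomega;\beta_1,\beta_2;\beta_1^*,\beta_2^*) \cong \End_{\gl_{m+n}}(X) \cong \bigoplus_{k \in \Z} \Hom_K(\tilde X, \tilde X\langle k\rangle),
\end{equation*}
and the right-hand side is manifestly $\Z$-graded. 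The identification of graded decomposition numbers with parabolic Kazhdan-Lusztig polynomials of type $A$ is then a direct consequence of the classical Kazhdan-Lusztig conjecture for parabolic $\catO$ (Beilinson-Ginzburg-Soergel, Backelin), reinterpreted graded-combinatorially in \cite{MR2781018}.

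For the graded cellular structure I would transfer the diagrammatic cellular basis of $K$ to $\End_K(\tilde X)$, using the explicit description of cell modules and of Hom-spaces between them in the arc algebra. The main obstacle is that the summands of $\tilde X$ are standard rather than projective $K$-modules, so $\End_K(\tilde X)$ is not literally an idempotent truncation $eKe$ and one cannot invoke the standard K\"onig-Xi criterion for cellularity of idempotent subalgebras directly. Producing the cellular datum therefore requires an explicit construction, either via the diagrammatic presentation of \cite{MR2781018} together with the combinatorics of Verma embeddings, or by passing to a projective cover of $\tilde X$ in the arc algebra and checking that the resulting cellular structure descends compatibly to the endomorphism algebra of $\tilde X$ itself.
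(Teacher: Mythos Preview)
Your approach diverges from the paper's at the first substantive step, and the divergence creates a genuine gap.

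You claim that iterating Lemmas~\ref{lem:13} and~\ref{lem:14} shows $X=M^\frakp(\udelta)\otimes V^{\otimes A}$ is a direct sum of parabolic Verma modules. But those lemmas treat only the single weight~$\udelta$; they do not iterate. The splitting in Lemma~\ref{lem:13} comes from the Casimir eigenvalues on the two Verma subquotients being distinct, and after the first step the relevant eigenvalues are the values $(\mu+\rho,\epsilon_j)$ for the addable positions $j$. These can coincide across the two blocks: for instance with $\mu=\udelta+\epsilon_1$ the addable values are $-\delta+1,\,-\delta-1,\,-m$, which collide already when $\delta=m\pm 1$. The hypothesis $m,n\ge r+t$ does not prevent this. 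So in general $X$ carries only a Verma \emph{filtration}, not a Verma decomposition, and your subsequent discussion of the ``obstacle'' (standard summands rather than projective ones) rests on an unproven and typically false premise.

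The paper sidesteps this entirely by observing that the specific weight $\udelta$ is either dominant or antidominant in $\catO(m,n)$, so $M^\frakp(\udelta)$ is either projective or tilting; tensoring with $V^{\pm 1}$ preserves both properties, hence $X$ itself is projective or tilting. Ringel self-duality of the blocks of $\catO(m,n)$ reduces the tilting case to the projective case, and then $\End(X)$ is literally an idempotent truncation $eKe$ of the (Koszul, graded quasi-hereditary) block algebra~$K$. Now K\"onig--Xi \cite{MR1648638} applies directly to give the graded cellular structure, and the graded decomposition numbers are the parabolic Kazhdan--Lusztig polynomials by \cite{MR2781018}. No explicit diagrammatic construction of a cell datum on $\End(X)$ is needed. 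The observation you are missing is precisely that $X$ is projective (or tilting), which turns your acknowledged obstacle into a non-issue.
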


\begin{proof}
  By Theorem \ref{thm:2} we need to prove the claim for the
  endomorphism algebra
  \begin{equation}
    \End_{\catO(m,n)}(M^\frakp(\udelta) \otimes V^{\otimes A}).\label{eq:22}
  \end{equation}
  Notice that the weight $\udelta$ is either a dominant weight of or
  an anti-dominant weight for the parabolic category $\catO(m,n)$ (it
  is dominant if $\delta \leq n$ and it is anti-dominant weight if
  $\delta \geq m$, but it could also happen that it is both dominant
  and anti-dominant). Hence the parabolic Verma module
  $M^\frakp(\udelta)$ is either a projective module or a tilting
  module in $\catO(m,n)$.

  If $M^\frakp(\udelta)$ is projective then $M^\frakp(\udelta) \otimes
  V^{\otimes A}$ is also a projective module. If $M^\frakp(\udelta)$
  is tilting, then $M^\frakp(\udelta) \otimes V^{\otimes A}$ is also
  tilting. Since the blocks of $\catO(m,n)$ are Ringel self-dual (for
  the regular blocks this is \cite[Proposition~4.4]{MR2369489}, and
  since $\frakp \subset \gl_N$ is a maximal parabolic subalgebra the
  singular blocks are equivalent to regular blocks for smaller $N$'s),
  the endomorphism algebra of this tilting module is isomorphic to the
  endomorphism algebra of a projective module.

  In both cases, \eqref{eq:22} is then isomorphic to an idempotent
  truncation of the endomorphism algebra of a projective generator of
  a sum of blocks of $\catO(m,n)$. Since blocks of $\catO(m,n)$ are graded
  quasi-hereditary, this idempotent truncation inherits the structure
  of a graded cellular algebra (see \cite[Proposition~4.3]{MR1648638}).
\end{proof}

\begin{proof}[Proof of Theorem \ref{thm:2}]
  First, let us compute the action of $y_1$ on $M^\frakp(\udelta)
  \otimes V$. We indicate with $z$ the highest vector of
  $M^\frakp(\udelta)$, and note that $\gl_m \oplus \gl_n \subset
  \frakp$ acts as $0$ on $z$, since $M^\frakp(\udelta) = U(\gl_{n+m})
  \otimes_{\frakp} E(\udelta)$ where $E(\udelta)$ is the simple $\gl_m
  \oplus \gl_n$-module with highest weight $\udelta$, that by our
  choice of $\udelta$ is one-dimensional. Hence $X_{ij} z=0$ whenever
  both $i,j \leq m$ or $i,j > m$, and we obtain:
  \begin{equation}
    \label{eq:41}
    y_1 (z \otimes v_i) =
    \begin{cases}
      \big(-\delta + \tfrac{m+n}{2}\big) z \otimes v_i
      & \text{if } i\leq m\\
      \big(\tfrac{m+n}{2}\big) z \otimes v_i + \displaystyle\sum_{k
        \in I, k\leq m} X_{ik} z \otimes v_k & \text{if } i> m.
    \end{cases}
  \end{equation}
  Analogously, let us compute the action of $y_1$ on
  $M^\frakp(\udelta) \otimes V^*$:
  \begin{equation}
    \label{eq:42}
    y_1 (z \otimes v^*_j) =
    \begin{cases}
      \big(\delta + \tfrac{m+n}{2}\big) z \otimes v^*_j -
      \displaystyle\sum_{k \in I, k>m} X_{kj} z \otimes v^*_k
      & \text{if } j\leq m\\
      \big(\tfrac{m+n}{2}\big) z \otimes v_i
      & \text{if } i> m.
    \end{cases}
  \end{equation}
  Now consider the action of $y_h$ on $M^\frakp(\udelta) \otimes
  V^{\otimes A}$. By Lemmas \ref{lem:9} and \ref{lem:10}, $y_h$ acts as
  $\Omega_{0h}$ plus some linear combination of $A$-walled Brauer
  diagrams.  The parabolic Verma $M^\frakp(\udelta)$ has a natural
  vector space grading gived by $\abs{X_{\zeta_1}\cdots X_{\zeta_\ell}
    z} = \ell$, where $\zeta_i$ are simple roots. This extends to a
  grading on $M^\frakp(\udelta) \otimes V^{\otimes A}$. Then we have
  \begin{equation}
    \label{eq:50}
    y_h(z \otimes \cdots  v^{\chi}_{i}  \cdots ) =
    \begin{cases}
      \displaystyle\sum_{k \in I, k\leq m} X_{ik}z \otimes \cdots v_k
      \cdots & \text{if } i>m, \chi=1,\\
      - \displaystyle\sum_{k \in I, k> m} X_{ki}z \otimes \cdots v_k^*
      \cdots & \text{if } i\leq m, \chi=-1,\\
      0 & \text{otherwise}
    \end{cases}
  \end{equation}
  up to terms of degree zero.

  Now consider a cyclotomic regular monomial, and draw it as a
  decorated $A$-walled Brauer diagram $\aleph$. Remember that we read
  diagrams from the bottom to the top. We are going to explain a way
  to label the endpoints of $\aleph$. We consider the oriented arcs of
  $\aleph$ as arrows, having a source and a target. Order in some way
  the sources of the arrows of $\aleph$, labeling them sequentially
  with numbers $m+1,\ldots,m+r+t$. Now for every undecorated arrow
  label the target with the same label as the source. For every
  decorated arrow such that the source is labeled with $p$, label the
  target with $p-m$. Let $\tau_i$ be the label of the target of the
  arrow with source $i-m$.

  In this way, we obtain sequences 
  $1\leq b_h,c_k \leq m+n$ for $h,k=1,\ldots,r+t$ respectively on the
  bottom and on the top of our diagram.

  Let now $v^{\otimes A}_b= v^{a_1}_{b_1} \otimes \cdots \otimes
  v^{a_{r+t}}_{b_{r+t}}$ where as usual $v_i^1=v_i$ and
  $v_i^{-1}=v_i^*$. Take a zero linear combination $\sum_\gimel
  f_\gimel \, \gimel =0 $ of cyclotomic regular monomials. Pick a
  diagram $\aleph$ with all arcs decorated and let $b=b(\aleph)$,
  $c=c(\aleph)$, $\tau=\tau(\aleph)$ be the sequences constructed as
  before. By \eqref{eq:50} and by our construction, we have
  \begin{equation}
    \label{eq:51}
    \bigg\langle \sum_{\gimel} f_\gimel \, \gimel\, (v^{\otimes A}_{b(\aleph)}) ,
    X_{1,\tau_1}\cdots X_{r+t,\tau_{r+t}} v^{\otimes A}_{c(\aleph)} \bigg\rangle
    = \pm f_\aleph,
  \end{equation}
  where we have fixed on $M\otimes V^{\otimes A}$ the standard scalar
  product, with respect to that the standard basis is
  orthonormal. Hence $f_\aleph=0$.

  Now pick a diagram $\aleph$ with all but one arcs decorated, let
  $b,c,\tau$ be the sequences as above. Then equation \eqref{eq:51}
  again holds, if we do not write the $X_i, \tau_i$ corresponding to
  the undecorated arrow. Proceeding in this way, we have that all
  coefficients $f_\gimel$ are zero, hence the representation is
  faithful, or in other words the map
  \begin{equation}
    \label{eq:52}
    \VB_A(\boldsymbol \omega; \beta_1, \beta_2; \beta_1^*, \beta_2^*) \longrightarrow \End_{\gl_{n+m}}
    (M^\frakp(\udelta) \otimes V^{\otimes A})
  \end{equation}
  is injective.

  To prove surjectivity, we use a dimension argument. On one side,
  note that there are $2^{r+t}(r+t)!$ cyclotomic regular monomial,
  hence
  \begin{equation}
    \dim \VB_A(\boldsymbol \omega; \beta_1, \beta_2; \beta_1^*, \beta_2^*) \leq 2^{r+t}(r+t)!\label{eq:222}
\end{equation}
By the injectivity of \eqref{eq:52}, this is actually an equality. On the other side, by adjunction we have 
  \begin{equation}
    \label{eq:166}
     \End_{\gl_{m+n}} (M^\frakp(\udelta) \otimes V^{\otimes A})  \cong \End_{\gl_{m+n}} (M^\frakp(\udelta) \otimes V^{\otimes (r+t)})
  \end{equation}
as vector spaces; the dimension of the r.h.s.\ of \eqref{eq:166} can be computed counting standard tableaux, and is well-known to be $2^{r+t}(r+t)!$
  \end{proof}

Putting together the isomorphisms \eqref{eq:40} for all $A \in \Seq_{r,t}$ one gets the following:

\begin{corollary}
  \label{cor:5}
  With the hypotheses of Theorem~\ref{thm:2}, the cyclotomic regular monomials \eqref{eq:259} give a basis of $\uVB_{r,t}(\bomega; \beta_1,\beta_2;\beta_1^*,\beta_2^*)$ and we have an isomorphism of algebras
  \begin{equation}
    \label{eq:261}
    \Mor(\uVB_{r,t}(\bomega;\beta_1,\beta_2;\beta_1^*,\beta_2^*)) \cong \End_{\gl_{n+m}} \bigg(\bigoplus_{A \in \Seq_{r,t}} M^\frakp(\udelta) \otimes V^{\otimes A}\bigg).
  \end{equation}
\end{corollary}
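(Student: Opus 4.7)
The plan is to deduce this corollary from Theorem~\ref{thm:2} by reducing every Hom-space in the cyclotomic category to an endomorphism algebra already controlled by that theorem. Both sides of \eqref{eq:261} decompose naturally as direct sums indexed by pairs $(A,A') \in \Seq_{r,t} \times \Seq_{r,t}$: the left-hand side as
\begin{equation*}
\Mor(\uVB_{r,t}(\bomega;\beta_1,\beta_2;\beta_1^*,\beta_2^*)) = \bigoplus_{A, A'} \Hom_{\uVB_{r,t}(\bomega;\ldots)}(A, A'),
\end{equation*}
and the right-hand side analogously via the canonical decomposition
\begin{equation*}
\End_{\gl_{n+m}}\bigg(\bigoplus_{A} M^\frakp(\udelta) \otimes V^{\otimes A}\bigg) = \bigoplus_{A,A'} \Hom_{\gl_{n+m}}(M^\frakp(\udelta) \otimes V^{\otimes A}, M^\frakp(\udelta) \otimes V^{\otimes A'}).
\end{equation*}
The functor of Corollary~\ref{cor:1} induces a $\C$-linear map on each summand, and the goal is to show that every one of them is an isomorphism.

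Fix a pair $(A, A')$ and choose any product of simple transpositions $\sfs_{i_1} \cdots \sfs_{i_k}$ carrying $A$ to $A'$; let $\sigma_{A, A'} \colon A \to A'$ denote the corresponding composition of $\hat s$'s in $\uVB_{r,t}(\bomega;\ldots)$. By relation (\ref{item:1}) each $\hat s_i$ is self-inverse, so $\sigma_{A,A'}$ is invertible in the cyclotomic category, and its image $\tilde\sigma_{A,A'}$ under the functor of Corollary~\ref{cor:1} is an invertible $\gl_{n+m}$-module homomorphism, being a composition of the tensor-flip isomorphisms \eqref{eq:13}. Left-composition with $\sigma_{A,A'}$ and with $\tilde\sigma_{A,A'}$ fits into a commutative square of $\C$-linear maps in which both vertical arrows are isomorphisms; the top horizontal arrow (the $A = A'$ case) is an isomorphism by Theorem~\ref{thm:2}, and hence so is the bottom. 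Summing over all pairs and observing that the result respects composition yields the algebra isomorphism \eqref{eq:261}.

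For the basis statement, Proposition~\ref{prop:1} already gives that the cyclotomic regular $AA'$-monomials span $\Hom_{\uVB_{r,t}(\bomega;\ldots)}(A, A')$. Linear independence follows by repeating the pairing argument from the end of the proof of Theorem~\ref{thm:2}: the construction of the source/target labels $b(\aleph)$, $c(\aleph)$, $\tau(\aleph)$ attached to a decorated Brauer diagram $\aleph$ is local along each arc and does not require $A = A'$, while the extraction identity \eqref{eq:51} reads off the coefficient of $\aleph$ using only the standard bases of $M^\frakp(\udelta) \otimes V^{\otimes A}$ and $M^\frakp(\udelta) \otimes V^{\otimes A'}$. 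The principal obstacle is therefore not a new argument but the bookkeeping needed to verify that the labelling scheme from Theorem~\ref{thm:2} still produces a well-defined and separating pairing on $AA'$-diagrams with $A \neq A'$; once this is granted the basis claim is immediate, since spanning together with injectivity into the (finite-dimensional) Lie-algebra Hom space then forces basis.
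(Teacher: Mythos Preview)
Your argument is correct and is essentially the approach the paper has in mind: the paper gives no separate proof of this corollary, introducing it only with the sentence ``Putting together the isomorphisms \eqref{eq:40} for all $A \in \Seq_{r,t}$ one gets the following'', and your use of the invertible $\hat s_i$'s (cf.\ Lemma~\ref{lem:16}) to transport the diagonal isomorphisms of Theorem~\ref{thm:2} to the off-diagonal Hom-spaces is exactly the natural way to make that sentence precise. Your second paragraph, re-running the labelling/pairing argument for $AA'$-diagrams, is sound but not strictly needed once you have the commutative-square reduction, since spanning plus the established bijectivity already yields the basis claim.
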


We conclude this section giving an explicit formula to compute the parameters $\omega_k$ in term of $\beta_1,\beta_2$ (and hence in term of $m$, $n$ and $\delta$).

\begin{lemma}
  \label{lem:17}
  The elements $\omega_k$ in $\VB_{r,t}(\boldsymbol \omega; \beta_1, \beta_2; \beta_1^*,\beta_2^*)$ satisfy the following recursion formula
  \begin{equation}
    \label{eq:90}
    \omega_k = (\beta_1+ \beta_2) \omega_{k-1} - \beta_1 \beta_2 \omega_{k-2}    \end{equation}
with initial data $\omega_0=m+n$, $\omega_1= -\delta m+\frac{(m+n)^2}{2}$.
\end{lemma}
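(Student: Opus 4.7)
The plan is to derive the recursion~\eqref{eq:90} directly from the cyclotomic relation~\eqref{eq:48}, and to pin down $\omega_1$ by an explicit computation on the highest weight vector of $M^\frakp(\udelta) \otimes V \otimes V^*$.

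For the recursion, recall from~\eqref{eq:39} that $\omega_k$ is characterised by $e_1 y_1^k e_1 = \omega_k e_1$ as an endomorphism of any $A \in \Seq_{r,t}$ with $a_1 = 1$, $a_2 = -1$. The cyclotomic relation~\eqref{eq:48} rewrites on such an $A$ as the identity
\[
y_1^2 = (\beta_1 + \beta_2)\, y_1 - \beta_1\beta_2
\]
of endomorphisms of $A$. For $k \geq 2$, multiplying on the left by $e_1 y_1^{k-2}$ and on the right by $e_1$ and applying~\eqref{eq:39} to each of the three resulting terms gives
\[
\omega_k\, e_1 = (\beta_1+\beta_2)\, \omega_{k-1}\, e_1 - \beta_1\beta_2\, \omega_{k-2}\, e_1.
\]
Since the action is faithful by Theorem~\ref{thm:2} so that $e_1 \neq 0$, this yields exactly \eqref{eq:90}.

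The initial value $\omega_0 = m+n$ is Lemma~\ref{lem:1}. To compute $\omega_1$, I use the faithful representation of Corollary~\ref{cor:1} and evaluate $e_1 y_1 e_1$ on $z \otimes v_1 \otimes v_1^*$, where $z$ is the standard highest weight vector of $M^\frakp(\udelta)$. Since $e_1(z \otimes v_1 \otimes v_1^*) = \sum_i z \otimes v_i \otimes v_i^*$, formula~\eqref{eq:41} shows that $y_1$ multiplies the $i$-th summand by $-\delta + \tfrac{m+n}{2}$ when $i \leq m$ and by $\tfrac{m+n}{2}$ when $i > m$, plus extra off-diagonal terms of the form $X_{iq} z \otimes v_q \otimes v_i^*$ in the latter case. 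A brief check shows that these off-diagonal terms are killed by the final application of $e_1$, because survival requires $q = i$, contradicting $q \leq m < i$. Summing the two diagonal contributions,
\[
e_1 y_1 e_1(z \otimes v_1 \otimes v_1^*) = \Bigl[ m\Bigl( -\delta + \tfrac{m+n}{2}\Bigr) + n \cdot \tfrac{m+n}{2}\Bigr]\, e_1(z \otimes v_1 \otimes v_1^*),
\]
which simplifies to $\omega_1 = -\delta m + \tfrac{(m+n)^2}{2}$.

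The only step that requires any bookkeeping is the cancellation of the off-diagonal correction terms under the second $e_1$; everything else is formal manipulation with the cyclotomic relation and the defining identity of $\omega_k$.
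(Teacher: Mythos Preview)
Your proof is correct and follows essentially the same approach as the paper: the recursion is obtained by sandwiching the cyclotomic identity $y_1^2=(\beta_1+\beta_2)y_1-\beta_1\beta_2$ between $e_1 y_1^{k-2}$ and $e_1$, and $\omega_1$ is computed by acting on the highest weight vector via~\eqref{eq:41}. Your explicit justification that the off-diagonal terms $X_{ik}z\otimes v_k\otimes v_i^*$ are annihilated by $e_1$ (since $k\le m<i$) is a small clarification the paper leaves implicit.
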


\begin{proof}
  By Lemma \ref{lem:1}, $\omega_0=m+n$. Let $A=(1,-1)$; we have
  \begin{equation}
    \label{eq:91}
    \begin{aligned}
      &e^{(A)}_{1} y_1 e^{(A)}_{1} (z \otimes v_i \otimes v_i^*)  =
      \sum_{j=1}^{m+n} e^{(A)}_{1} y_1 (z \otimes v_j \otimes
      v_j^*) \\
       & \quad = \sum_{j=1}^m e^{(A)}_{1} (-\delta z \otimes v_j \otimes v_j^*) + \frac{m+n}{2} \sum_{j=1}^{m+n} e^{(A)}_{1}(z \otimes v_j \otimes v_j^*)\\
      & \quad = \left(-\delta m + \frac{(m+n)^2}{2}\right) e^{(A)}_{1} (z \otimes v_j \otimes v_j^*),
    \end{aligned}
  \end{equation}
  hence $\omega_1=-\delta m + \frac{(m+n)^2}{2}$.

  The recursion relation follows from
  \begin{equation}
e^{(A)}_1 y_1^n e^{(A)}_{1}= (\beta_1 + \beta_2) e^{(A)}_{1} y_1^{n-1} e^{(A)}_{1} - \beta_1 \beta_2 e^{(A)}_{1} y_1^{n-2} e^{(A)}_{1}.\qedhere\label{eq:266}
\end{equation}
\end{proof}

Using the standard elementary theory of power series defined by recurrence relations, we can explictely compute $W_1(u)$:
\begin{equation}
  \label{eq:92}
  W_1(u)=\frac{\omega_0 + (\omega_1 - (\beta_1 + \beta_2)\omega_0)u^{-1}}{1-(\beta_1 + \beta_2) u^{-1} + \beta_1 \beta_2 u^{-2}}.
\end{equation}

\section{Diagram and partition calculus}
\label{sec:diagr-part-calc}

We explain now how one can determine which composition factors appear in $M^{\frakp}(\udelta) \otimes V^{\otimes A}$ using
a partition calculus.

Recall that a \emph{Young diagram} is a collection of boxes arranged in left-justified rows with a weakly decreasing number of boxes in each row. The \emph{content} of the box in the $r$-th row and $c$-th column (counting from the left to the right and from the top to the bottom, and starting with $0$) is $r-c$.

A \emph{rotated Young diagram} is a Young diagram rotated of 180 degrees. The content of a box in a rotated Young diagram is the same as the content of the original box in the original Young diagram (Figure \ref{fig:youngdiagrams}).

\begin{figure}
  \centering
  \begin{tikzpicture}[scale=0.7]
    \draw (0,0) -- (4,0);
    \draw (0,-1) -- (4,-1);
    \draw (0,-2) -- (3,-2);
    \draw (0,-3) -- (1,-3);
    \draw (0,0) -- (0,-3);
    \draw (1,0) -- (1,-3);
    \draw (2,0) -- (2,-2);
    \draw (3,0) -- (3,-2);
    \draw (4,0) -- (4,-1);
    \begin{scope}[xshift=0.5cm,yshift=-0.5cm]
      \node at (0,0) {$0$}; \node at (1,0) {$-1$}; \node at (2,0)
      {$-2$}; \node at (3,0) {$-3$}; \node at (0,-1) {$1$}; \node at
      (1,-1) {$0$}; \node at (2,-1) {$-1$}; \node at (0,-2) {$2$};
    \end{scope}
  \end{tikzpicture}
  \hspace{1cm}
  \begin{tikzpicture}[scale=0.7,rotate=180]
    \draw (0,0) -- (4,0);
    \draw (0,-1) -- (4,-1);
    \draw (0,-2) -- (3,-2);
    \draw (0,-3) -- (1,-3);
    \draw (0,0) -- (0,-3);
    \draw (1,0) -- (1,-3);
    \draw (2,0) -- (2,-2);
    \draw (3,0) -- (3,-2);
    \draw (4,0) -- (4,-1);
    \begin{scope}[xshift=0.5cm,yshift=-0.5cm]
      \node at (0,0) {$0$}; \node at (1,0) {$-1$}; \node at (2,0)
      {$-2$}; \node at (3,0) {$-3$}; \node at (0,-1) {$1$}; \node at
      (1,-1) {$0$}; \node at (2,-1) {$-1$}; \node at (0,-2) {$2$};
    \end{scope}
  \end{tikzpicture}
  \caption{A Young diagram and a rotated Young diagram, with the contents written in the boxes.}
  \label{fig:youngdiagrams}
\end{figure}
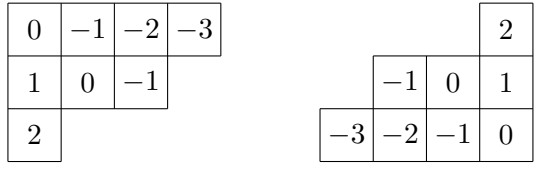

Let us now fix positive integers $m$ and $n$ and an integer $\delta$. In a plane let us consider the infinite vertical strip of width equal to $m+n$ boxes. Fix a horizontal line $o$. Fix also a vertical line $v$ in such a way that there is space for exactly $n$ boxes on the left of $v$ and for $m$ boxes on the right of $v$. The lines $o$ and $v$ divide our strip into four regions. Let us number the columns of our vertical strip by the integers $1,\ldots,m+n$ from the right to the left. We define a {\em 4-Young diagram} to be a collection of boxes in this strip, such that in the two regions under the horizontal line $o$ we have two Young diagrams and in the two regions above $o$ we have two rotated Young diagrams, and such that in no column there are boxes both above and under $o$ (see Figure \ref{fig:4youngdiagram}).

By definition a 4-Young diagram is made of four Young diagrams, and every box belongs to exactly one of these. We define the content of a box to be the content in the corresponding Young diagram, translated by the following values:
\begin{itemize}
\item the lower left Young diagram by $\frac{m+n}{2}$, 
\item the upper left Young diagram by $\frac{n-m}{2}$, 
\item the lower right Young diagram by $\frac{m-n}{2}+\delta$, 
\item the upper right Young diagram by $\frac{m+n}{2}-\delta$.
\end{itemize}
See Figure \ref{fig:4youngdiagram}.

\begin{figure}
\centering
\begin{tikzpicture}[scale=0.7]

  \draw[dashed] (0,4) -- (0,-4);
  \draw[dashed] (6,4) -- (6,-4);
  \draw[dashed] (12,4) -- (12,-4);
  \draw[dashed] (0,0) -- (12,0);

  \draw (0,0) -- (2,0);
  \draw (0,-1) -- (2,-1);
  \draw (0,-2) -- (1,-2);
  \draw (0,-3) -- (1,-3);
  \draw (0,0) -- (0,-3);
  \draw (1,0) -- (1,-3);
  \draw (2,0) -- (2,-1);
  \node at (0.5,-0.5) {$0$};
  \node at (1.5,-0.5) {$-1$};
  \node at (0.5,-1.5) {$1$};
  \node at (0.5,-2.5) {$2$};
  \node[anchor=north west] at (2,-2) {$\frac{m+n}{2}$};
  \draw[thin] (2,-2) -- (1.4,-1.4);

  \begin{scope}[xshift=6cm, rotate=180]
  \draw (0,0) -- (2,0);
  \draw (0,-1) -- (2,-1);
  \draw (0,-2) -- (1,-2);
  \draw (0,-3) -- (1,-3);
  \draw (0,0) -- (0,-3);
  \draw (1,0) -- (1,-3);
  \draw (2,0) -- (2,-1);
  \node at (0.5,-0.5) {$0$};
  \node at (1.5,-0.5) {$-1$};
  \node at (0.5,-1.5) {$1$};
  \node at (0.5,-2.5) {$2$};
  \node[anchor=south east] at (2,-2) {$\frac{n-m}{2}$};
  \draw[thin] (2,-2) -- (1.4,-1.4);
  \end{scope}

  \begin{scope}[xshift=6cm]
  \draw (0,0) -- (2,0);
  \draw (0,-1) -- (2,-1);
  \draw (0,-2) -- (1,-2);
  \draw (0,-3) -- (1,-3);
  \draw (0,0) -- (0,-3);
  \draw (1,0) -- (1,-3);
  \draw (2,0) -- (2,-1);
  \node at (0.5,-0.5) {$0$};
  \node at (1.5,-0.5) {$-1$};
  \node at (0.5,-1.5) {$1$};
  \node at (0.5,-2.5) {$2$};
  \node[anchor=north west] at (2,-2) {$\frac{m-n}{2}+\delta$};
  \draw[thin] (2,-2) -- (1.4,-1.4);
  \end{scope}

  \begin{scope}[xshift=12cm,rotate=180]
  \draw (0,0) -- (2,0);
  \draw (0,-1) -- (2,-1);
  \draw (0,-2) -- (1,-2);
  \draw (0,-3) -- (1,-3);
  \draw (0,0) -- (0,-3);
  \draw (1,0) -- (1,-3);
  \draw (2,0) -- (2,-1);
  \node at (0.5,-0.5) {$0$};
  \node at (1.5,-0.5) {$-1$};
  \node at (0.5,-1.5) {$1$};
  \node at (0.5,-2.5) {$2$};
  \node[anchor=south east] at (2,-2) {$\frac{m+n}{2}-\delta$};
  \draw[thin] (2,-2) -- (1.4,-1.4);
  \end{scope}

\end{tikzpicture}
\caption{A 4-Young diagram with the contents in the boxes. The corresponding weight is $3 \epsilon_1 + \epsilon_2 - \epsilon_{m-1} - 3 \epsilon_m + 3\epsilon_{m+1}+\epsilon_{m+2} - \epsilon_{m+n-1} - 3 \epsilon_{m+n}$. }
\label{fig:4youngdiagram}
\end{figure}
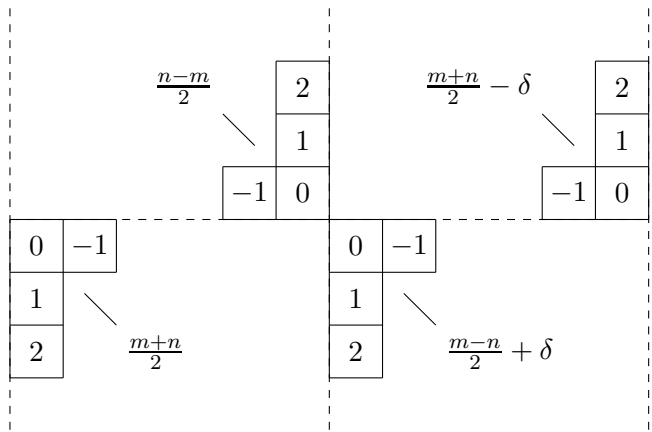

Given a 4-Young diagram $Y$, let $b_i(Y)$ be equal to the number of boxes in the column $i$ of $Y$, multiplied by $-1$ if the boxes are under the horizontal line $o$.

To $\lambda \in \Lambda(m,n)$ we associate the 4-Young diagram $Y(\lambda)$ determined by $b_i(Y(\lambda))=\langle\lambda,\epsilon_i\rangle$. More generally, in the same way, to any weight for $\gl_{m+n}$ we could associate a diagram consisting of boxes in our infinite vertical strip: one can check that this diagram is a 4-Young diagram if and only if the weight is in $\Lambda(m,n)$.

Given a 4-Young diagram $Y$, we may obtain another 4-Young diagram
$Y'$ by adding a box to it (we also say that $Y$ is obtained by
removing a box to $Y'$).  Notice that we use the expressions \emph{adding} and \emph{removing
boxes} only if the result is again a 4-Young diagram.  For a
$(r,t)$-sequence $A$ let us define $\calY_A$ to be the set of
sequences
\begin{equation}
  Y_0=Y(\udelta),Y_1,\ldots,Y_{r+t}\label{eq:236}
\end{equation}
of 4-Young diagrams such
that $Y_{i+1}$ is obtained from $Y_i$ by
\begin{itemize}
\item adding a box above $o$ or removing a box below $o$ if $a_i = 1$,
\item removing a box above $o$ or adding a box below $o$ if $a_i=-1$.
\end{itemize}

From the construction and the properties of the functors of tensoring with $V$ and $V^*$ on $\catO(m,n)$ we have the following result:

\begin{lemma}
  \label{lem:40}
  There is a bijection between $\calY_A$ and the composition factors
  of $M^\frakp(\udelta)\otimes V^{\otimes A}$ (counted with
  multiplicity). The element $Y_\bullet \in \calY_A$ corresponds to a
  composition factor isomorphic to $M(\lambda)$.
\end{lemma}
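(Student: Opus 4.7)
The plan is to argue by induction on the length $r+t$ of the sequence $A$, using the fact recalled earlier in the section that $M^\frakp(\lambda)\otimes V$ has a filtration with sections $M^\frakp(\lambda+\epsilon_j)$ for precisely those $j\in I$ with $\lambda+\epsilon_j\in\Lambda(m,n)$, and analogously $M^\frakp(\lambda)\otimes V^*$ has sections $M^\frakp(\lambda-\epsilon_j)$ for precisely those $j$ with $\lambda-\epsilon_j\in\Lambda(m,n)$. The base case $r+t=0$ is trivial: the unique sequence in $\calY_A$ is $(Y(\udelta))$ and the only composition factor is $M^\frakp(\udelta)$ itself.

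For the inductive step, suppose the statement holds for $A=(a_1,\ldots,a_{r+t})$, and let $A'=(a_1,\ldots,a_{r+t},a_{r+t+1})$. Tensoring commutes with taking filtrations, so every composition factor of $M^\frakp(\udelta)\otimes V^{\otimes A'}$ arises by tensoring a composition factor $M^\frakp(\lambda)$ of $M^\frakp(\udelta)\otimes V^{\otimes A}$ with $V$ or $V^*$ (according to $a_{r+t+1}=\pm1$) and taking its composition factors. By induction those $M^\frakp(\lambda)$ are indexed by sequences $(Y_0,\ldots,Y_{r+t})\in\calY_A$ with $\lambda$ the weight encoded by $Y_{r+t}$. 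Thus it suffices to establish a bijection, for each fixed 4-Young diagram $Y$ encoding some $\lambda\in\Lambda(m,n)$, between
\begin{itemize}
\item the indices $j\in I$ with $\lambda+\epsilon_j\in\Lambda(m,n)$, and the 4-Young diagrams obtained from $Y$ by adding a box above $o$ or removing a box below $o$;
\item the indices $j\in I$ with $\lambda-\epsilon_j\in\Lambda(m,n)$, and the 4-Young diagrams obtained from $Y$ by removing a box above $o$ or adding a box below $o$.
\end{itemize}

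The key combinatorial observation is that $b_j(Y)=\langle\lambda,\epsilon_j\rangle$, so the operation $\lambda\mapsto\lambda+\epsilon_j$ is exactly $b_j\mapsto b_j+1$; this corresponds to adding a box above $o$ in column $j$ if $b_j\ge 0$, or removing a box below $o$ in column $j$ if $b_j<0$ (and symmetrically for $-\epsilon_j$). The membership condition in $\Lambda(m,n)$ translates via the defining inequalities on the first $m$ and last $n$ coordinates of $\lambda+\rho$ into the requirement that the resulting column shape still fits into the (possibly rotated) Young diagram structure on each of the four regions and that no column has boxes both above and below $o$; so valid additions/removals in the 4-Young-diagram sense correspond bijectively to valid $j$'s.

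The main point that requires care is this combinatorial dictionary: one must verify that the strict-decrease condition on $(\lambda+\rho,\epsilon_i)$ over $1\le i\le m$ and over $m+1\le i\le m+n$ is equivalent to each of the four pieces of $Y$ being a (rotated) Young diagram with the correct orientation, and that the $\rho$-shift is absorbed correctly by the content shifts $\tfrac{m+n}{2}$, $\tfrac{n-m}{2}$, $\tfrac{m-n}{2}+\delta$, $\tfrac{m+n}{2}-\delta$ attached to the four regions. Once this dictionary is in place, iterating gives precisely the elements of $\calY_{A'}$ as extensions of elements of $\calY_A$, and multiplicity is preserved because distinct pairs $(Y_\bullet\in\calY_A, \text{box move})$ produce distinct 4-Young diagrams $Y_{r+t+1}$ and hence distinct final weights. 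This completes the induction.
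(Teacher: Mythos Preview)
Your approach is exactly what the paper has in mind: the paper does not give a proof at all, only the sentence ``From the construction and the properties of the functors of tensoring with $V$ and $V^*$ on $\catO(m,n)$ we have the following result'', and your induction on the length of $A$ is the natural way to unpack that sentence. The single-step bijection between indices $j$ with $\lambda\pm\epsilon_j\in\Lambda(m,n)$ and admissible box moves on $Y(\lambda)$ is precisely the combinatorial content being invoked.

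Two small points. First, the remark about the content shifts $\tfrac{m+n}{2}$, $\tfrac{n-m}{2}$, etc.\ absorbing the $\rho$-shift is irrelevant for this lemma: the contents only enter in Proposition~\ref{prop:4} when computing eigenvalues of the $y_i$'s, whereas here you only need the bijection $\Lambda(m,n)\leftrightarrow\{\text{4-Young diagrams}\}$ at the level of sets. Second, your final sentence is not quite right as written: distinct pairs $(Y_\bullet,\text{box move})$ need \emph{not} produce distinct $Y_{r+t+1}$ (two different histories can lead to the same final diagram). What is true, and what you actually need, is that distinct pairs give distinct \emph{sequences} $(Y_0,\ldots,Y_{r+t+1})$, i.e.\ distinct elements of $\calY_{A'}$; multiplicities are then preserved simply because the map $(Y_\bullet,\text{box move})\mapsto(Y_0,\ldots,Y_{r+t},Y_{r+t+1})$ is a bijection onto $\calY_{A'}$.
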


Notice that as a consequence we have the following non-trivial combinatorial statement:

\begin{corollary}
  \label{cor:3}
  The cardinality of $\calY_A$ is $2^{r+t} (r+t)!$.
\end{corollary}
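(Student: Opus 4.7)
The plan is to deduce the corollary directly from Lemma~\ref{lem:40} together with Theorem~\ref{thm:2}. First, I would invoke Lemma~\ref{lem:40} to identify $|\calY_A|$ with the number of composition factors (counted with multiplicity) in a parabolic Verma flag of $M^\frakp(\udelta) \otimes V^{\otimes A}$. Second, I would invoke Theorem~\ref{thm:2}, which identifies the endomorphism algebra $\End_{\gl_{m+n}}(M^\frakp(\udelta) \otimes V^{\otimes A})$ with $\VB_A(\bomega;\beta_1,\beta_2;\beta_1^*,\beta_2^*)$; combined with the basis of cyclotomic regular monomials from Proposition~\ref{prop:1}, this yields the dimension $2^{r+t}(r+t)!$.

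To bridge the two counts I would use the structural observation from the proof of Corollary~\ref{cor:2}: under the hypotheses $m,n \geq r+t$ and $\delta \neq m,n$, the module $M^\frakp(\udelta)$ is either projective (if $\delta \leq n$) or tilting (if $\delta \geq m$) in $\catO(m,n)$, and this property is preserved by tensoring with $V$ or $V^*$. The 4-Young diagram combinatorics set up in this section is precisely tailored to enumerate the parabolic Verma subquotients of such a module, with each sequence $Y_\bullet \in \calY_A$ encoding the recursive choice of section at each stage of the tensor product.

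The main obstacle is the precise match between the total count of parabolic Verma subquotients (with multiplicity) and the number $2^{r+t}(r+t)!$. The cleanest route to handle this is to exhibit, via Lemma~\ref{lem:40}, an explicit bijection between $\calY_A$ and the cyclotomic regular monomials indexing the basis of $\VB_A$ from Proposition~\ref{prop:1}: under such a bijection the numerical count $2^{r+t}\cdot(r+t)!$ arises transparently from the $2^{r+t}$ choices of decoration exponents $\gamma_i,\eta_j \in \{0,1\}$ together with the $(r+t)!$ undecorated walled Brauer diagrams of type~$A$. With this identification in hand, the cardinality statement follows at once.
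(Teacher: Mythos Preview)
Your overall strategy mirrors the paper's own (the paper gives no proof beyond the phrase ``as a consequence''), but the bridging step you flag as ``the main obstacle'' is a genuine gap that your proposal does not close. By Lemma~\ref{lem:40}, $|\calY_A|$ equals the total number of parabolic Verma subquotients of $X:=M^\frakp(\udelta)\otimes V^{\otimes A}$ counted with multiplicity, namely $\sum_\lambda (X:M^\frakp(\lambda))$. Theorem~\ref{thm:2}, on the other hand, computes $\dim\End_{\gl_{m+n}}(X)=2^{r+t}(r+t)!$. These two quantities are \emph{not} the same: when $X$ is tilting (hence self-dual with both standard and costandard filtrations) one has
\[
\dim\End(X)=\sum_\lambda (X:M^\frakp(\lambda))^2,
\]
which strictly exceeds $\sum_\lambda (X:M^\frakp(\lambda))$ as soon as some multiplicity is $\geq 2$. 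The projective/tilting property you invoke from Corollary~\ref{cor:2} does not convert one sum into the other.

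This already fails in the smallest interesting case $A=(1,1)$ with $m,n\geq 2$. There are six parabolic Verma subquotients of $M^\frakp(\udelta)\otimes V\otimes V$: the weight $\udelta+\epsilon_1+\epsilon_{m+1}$ occurs with multiplicity~$2$ (via the two paths $\udelta\to\udelta+\epsilon_1\to\udelta+\epsilon_1+\epsilon_{m+1}$ and $\udelta\to\udelta+\epsilon_{m+1}\to\udelta+\epsilon_{m+1}+\epsilon_1$), and the four weights $\udelta+2\epsilon_1$, $\udelta+\epsilon_1+\epsilon_2$, $\udelta+2\epsilon_{m+1}$, $\udelta+\epsilon_{m+1}+\epsilon_{m+2}$ each with multiplicity~$1$. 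Hence $|\calY_{(1,1)}|=6$, while $2^2\cdot 2!=8$ (and indeed $1+1+4+1+1=8$ recovers $\dim\End$). So the bijection you propose between $\calY_A$ and the cyclotomic regular monomials cannot exist. What Lemma~\ref{lem:40} combined with Theorem~\ref{thm:2} genuinely yields is
\[
\sum_{Y}\bigl|\{Y_\bullet\in\calY_A: Y_{r+t}=Y\}\bigr|^2=2^{r+t}(r+t)!,
\]
not $|\calY_A|=2^{r+t}(r+t)!$; the corollary as stated appears to be in error.
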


We can now compute the generalized eigenvalues of the $y_i$'s:

\begin{prop}
  \label{prop:4}
  Let $Y_\bullet \in \calY_A$. For $j=1,\ldots,m+n$ let $\eta_j=1$ if $Y_j$ has been obtained from $Y_{j-1}$ by adding a box of content $i_j$, otherwise let $\eta_j=-1$ if $Y_j$ has been obtained from $Y_{j-1}$ by removing a box of content $i_j$. Then the corresponding composition factor $M(\udelta+(-1)^{a_1} \epsilon_{\kappa_1} + \cdots +(-1)^{a_{r+t}} \epsilon_{\kappa_{r+t}})$ of $M^\frakp(\udelta) \otimes V^{\otimes A}$ is contained in the generalized eigenspace for the $y_i$'s with generalized eigenvalues $(\eta_1 i_1,\ldots, \eta_{r+t} i_{r+t})$.
\end{prop}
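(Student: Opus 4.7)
My plan is to compute, for each $j$, the generalized eigenvalue of $y_j$ on the composition factor of $M^\frakp(\udelta) \otimes V^{\otimes A}$ associated to $Y_\bullet$ via the standard Casimir coproduct identity, and then to match the resulting scalar against the combinatorial content $\eta_j i_j$.

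Write $\mu_j = \udelta + a_1 \epsilon_{\kappa_1} + \cdots + a_j \epsilon_{\kappa_j}$, so that the composition factor corresponding to $Y_\bullet$ is $M(\mu_{r+t})$ and at each intermediate step $j$ it arises as a (parabolic Verma) subquotient of $M(\mu_{j-1}) \otimes V^{a_j}$. Since $y_j = \sum_{0 \leq k < j} \Omega_{kj} + N/2$ only involves the tensor positions $0,\ldots,j$ and acts by the identity on the remaining positions, its generalized eigenvalue on the final composition factor equals the scalar by which it acts on $M(\mu_j)$. This scalar is well-defined because $y_j$ commutes with the $\gl_N$-action and preserves weights, so it acts on the highest-weight line of $M(\mu_j)$ by a scalar and hence by the same scalar on all of $M(\mu_j)$.

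To compute this scalar I would use the identity
\begin{equation*}
  \Delta^{(j)}(C) = \sum_{k=0}^{j} C_k + 2\sum_{0 \leq k < l \leq j} \Omega_{kl},
\end{equation*}
which, applied at levels $j$ and $j-1$, gives
\begin{equation*}
  2\sum_{0 \leq k < j} \Omega_{kj} = \Delta^{(j)}(C) - \Delta^{(j-1)}(C) \otimes \id - C_j.
\end{equation*}
On $M(\mu_j) \subset M(\mu_{j-1}) \otimes V^{a_j}$ the right hand side acts by the scalar $(\mu_j, \mu_j + 2\rho) - (\mu_{j-1}, \mu_{j-1}+2\rho) - c_{V^{a_j}}$, where $c_V = 1$ and $c_{V^*} = 2N-1$ are the Casimir scalars computed from the realizations $V = L(\epsilon_1)$ and $V^* = L(-\epsilon_N)$. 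Substituting $\mu_j = \mu_{j-1} + a_j \epsilon_{\kappa_j}$, expanding the bilinear form, and re-adding $N/2$ one obtains
\begin{equation*}
  y_j|_{M(\mu_j)} = a_j\,(\mu_{j-1}+\rho, \epsilon_{\kappa_j}) + \tfrac{1}{2}(1 - c_{V^{a_j}}) + \tfrac{N}{2},
\end{equation*}
which simplifies to $(\mu_{j-1})_{\kappa_j} - (\kappa_j-1) + N/2$ when $a_j = +1$ and to $-(\mu_{j-1})_{\kappa_j} + \kappa_j - N/2$ when $a_j = -1$. A direct check verifies that these scalars recover the four eigenvalues $\beta_1, \beta_2, \beta_1^*, \beta_2^*$ of Lemmas \ref{lem:13} and \ref{lem:14} in the initial case $j=1$.

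It remains to identify this scalar with $\eta_j i_j$. Under the identification $b_\kappa(Y_{j-1}) = (\mu_{j-1}, \epsilon_\kappa)$, the passage $Y_{j-1} \to Y_j$ either adds or removes a single box at strip column $\kappa_j$, with the region (upper/lower, left/right) dictated by the sign of $a_j$ together with the sign of $(\mu_{j-1})_{\kappa_j}$ and whether $\kappa_j \leq m$. A case-by-case verification, using that the local (rotated) Young diagram content of a box at row $r$ and column $c$ indexed from the corner adjacent to the intersection of $o$ and $v$ is $r - c$, together with the four region shifts, shows that $\eta_j i_j$ coincides with the Casimir-derived expression in each of the four cases. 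I expect this final combinatorial matching—simultaneously tracking the region shifts, the orientation convention of the rotated Young diagrams and the sign $\eta_j$—to be the main obstacle, though none of the individual verifications is conceptually deep.
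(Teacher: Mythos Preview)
Your proposal is correct and follows essentially the same approach as the paper: both use the Casimir coproduct identity to express $\sum_{k<j}\Omega_{kj}$ as a difference of Casimir scalars, evaluate the resulting expression on the highest weight of the relevant subquotient, and then match the answer against the box contents via the four region shifts. The only cosmetic difference is that the paper writes the recursive form $\Delta^{\ell}(C)=\Delta^{\ell-1}(C)\otimes 1 + 1^{\otimes \ell}\otimes C + 2\sum_{p<\ell}\Omega_{p\ell}$ while you use the fully expanded iterated coproduct; your explicit formulas for the two cases $a_j=\pm 1$ agree with the paper's equations \eqref{eq:106} and \eqref{eq:246}, and both you and the paper leave the final four-case content matching as a routine verification.
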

\begin{proof}
  Remember that we denote by $C$ the Casimir element of $\gl_{m+n}$. We have
  \begin{equation}
    \label{eq:104}
    \Delta^\ell(C) = \Delta^{\ell-1}(C) \otimes 1 + \Delta^{\ell-1}(1) \otimes C + 2 \sum_{p=0}^{\ell-1} \Omega_{p \ell}.
  \end{equation}
  Suppose first that $a_\ell=1$. To simplify the notation we set
  \begin{equation}
    \label{eq:232}
    \psi_h = (-1)^{a_1} \epsilon_1 + \cdots + (-1)^{a_h} \epsilon_{\kappa_h}
  \end{equation}
  for all $h=1,\ldots,r+t$.

 Then the action of $2\sum_{p=0}^{\ell-1} \Omega_{p \ell}$ on $M(\udelta+(-1)^{a_1} \epsilon_{\kappa_1} + \cdots +(-1)^{a_{r+t}} \epsilon_{\kappa_{r+t}}) = M(\udelta + \psi_{r+t})$ is given by
  \begin{equation}
    \label{eq:105}
      \langle \udelta+ \psi_\ell , \udelta+ \psi_\ell + 2\rho \rangle - \langle \udelta+ \psi_{\ell-1} , \udelta+ \psi_{\ell -1} + 2\rho\rangle - \langle \epsilon_1 , \epsilon_1 + 2\rho \rangle.
  \end{equation}
  Hence $y_\ell$ acts as
  \begin{equation}
    \label{eq:106}
    \langle \epsilon_{\kappa_\ell} ,  \udelta\rangle +  \langle \epsilon_{\kappa_\ell}, \psi_{\ell-1} \rangle + \langle \epsilon_{\kappa_\ell} - \epsilon_1,  \rho \rangle + \frac{m+n}{2}.
  \end{equation}

  Now suppose instead that $a_\ell=-1$. Then  the action of $2\sum_{p=0}^{\ell-1} \Omega_{p \ell}$ on $M(\udelta + \psi_{r+t})$ is given by
  \begin{equation}
    \label{eq:105}
      \langle \udelta+ \psi_\ell , \udelta+ \psi_\ell + 2\rho \rangle - \langle \udelta+ \psi_{\ell-1} , \udelta+ \psi_{\ell -1} + 2\rho\rangle + \langle \epsilon_{m+n} ,- \epsilon_{m+n} + 2\rho \rangle.
  \end{equation}
  Hence $y_\ell$ acts as
  \begin{equation}
    \label{eq:246}
    \begin{aligned}
      & -\langle \epsilon_{\kappa_\ell} , \udelta\rangle - \langle
      \epsilon_{\kappa_\ell}, \psi_{\ell-1} \rangle + \langle
      \epsilon_{m+n} - \epsilon_{k_\ell}, \rho \rangle + \frac{m+n}{2}\\
      & = -\langle \epsilon_{\kappa_\ell} , \udelta\rangle - \langle
      \epsilon_{\kappa_\ell}, \psi_{\ell-1} \rangle - \langle
      \epsilon_{\kappa_\ell} - \epsilon_{1}, \rho \rangle - \frac{m+n}{2} + 1.
    \end{aligned}
  \end{equation}

  Let us now examine \eqref{eq:106} and \eqref{eq:246}: in both of them, the second term is the index and the third term is the is (up to a shift) the column index of the box being added/removed at the $\ell$-th step. The claim then follows by the definition of the shifts of the contents of the boxes in the 4-Young diagram.
\end{proof}

\section{Proofs}
\label{sec:proofs}

We collect in this section the steps of the proof of Theorem \ref{thm:1}.

First, note that
\begin{equation}
  \label{eq:21}
  \dot s_i \Omega_{jk} = \Omega_{\mathsf s_i(j) \mathsf s_i (k)} \dot s_i
\end{equation}
for every $0 \leq j < k \leq r+t$,
where we define $\Omega_{kj} = \Omega_{jk}$ for $k>j$.

\begin{lemma}
  \label{lem:2}
  For all $\gl_N$-modules $M$ and all $A \in \Seq_{r,t}$, on $M \otimes V^{\otimes A}$ we have $\dot s_i y_j=y_j \dot s_i$ for $j \neq i, i+1$.
\end{lemma}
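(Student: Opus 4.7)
The plan is to compute $\dot s_i y_j$ directly using relation \eqref{eq:21} and show that the sum defining $y_j$ is stable under the permutation $\mathsf s_i$ when $j \neq i, i+1$.

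Recall that $y_j = \sum_{0 \leq k < j} \Omega_{kj} + \frac{N}{2}$, and $\dot s_i$ acts only at tensor positions $i$ and $i+1$ (with position $0$ being $M$). Applying \eqref{eq:21} termwise gives
\begin{equation*}
  \dot s_i y_j = \left(\sum_{0 \leq k < j} \Omega_{\mathsf s_i(k)\, \mathsf s_i(j)} \right) \dot s_i + \frac{N}{2} \dot s_i,
\end{equation*}
where by convention $\mathsf s_i$ fixes $0$. Since $j \neq i, i+1$ we have $\mathsf s_i(j) = j$, so it suffices to show $\sum_{k=0}^{j-1} \Omega_{\mathsf s_i(k)\, j} = \sum_{k=0}^{j-1} \Omega_{kj}$.

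I distinguish two cases. If $j < i$, then every $k$ in the range satisfies $k < j < i$, so $\mathsf s_i(k) = k$ and each term is individually fixed. If $j > i+1$, then the index set $\{0,1,\ldots,j-1\}$ contains both $i$ and $i+1$; these two indices are swapped by $\mathsf s_i$, so the corresponding terms $\Omega_{ij}$ and $\Omega_{i+1,j}$ are exchanged, while every other index is fixed. In either case the total sum is unchanged. Combined with the fact that the scalar $\frac{N}{2}$ trivially commutes with $\dot s_i$, this yields $\dot s_i y_j = y_j \dot s_i$.

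The only subtlety is bookkeeping: the position $0$ (occupied by $M$) must be treated carefully in the identity \eqref{eq:21}, but since $\dot s_i$ is the identity on that factor, $\mathsf s_i$ fixes $0$ and the argument goes through without modification. I do not expect any serious obstacle here, as the whole statement is a formal consequence of \eqref{eq:21} and the definition \eqref{eq:18}.
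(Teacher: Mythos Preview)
Your proof is correct and follows essentially the same approach as the paper's: both apply \eqref{eq:21} termwise to the sum defining $y_j$, note that $\mathsf s_i(j)=j$, and observe that the index set $\{0,\ldots,j-1\}$ is stable under $\mathsf s_i$ (trivially when $j<i$, by swapping $i\leftrightarrow i+1$ when $j>i+1$). The paper merely writes the $j<i$ case as ``obvious'' and the $j>i+1$ case as a one-line computation, while you spell out both cases in slightly more detail.
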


\begin{proof}
  The statement is obvious for $j < i$. Suppose $j > i+1$: we have
  \begin{equation*}
      \dot s_i y_j = \dot s_i \bigg[ \sum_{0 \leq k < j} \Omega_{kj} +
      \frac{N}{2} \bigg] =  \bigg[ \sum_{0 \leq k < j} \Omega_{\mathsf s_i(k)j} +
      \frac{N}{2} \bigg] \dot s_i = y_j \dot s_i. \qedhere 
  \end{equation*}
\end{proof}

\begin{lemma}
  \label{lem:4}
  On $V \otimes V^* \otimes V$ and on $V \otimes V^* \otimes V^*$ the elements $(\Omega_{13} + \Omega_{23})(\tau_V \otimes \id)$ and $(\tau_V \otimes \id)(\Omega_{13} + \Omega_{23})$ act as zero. Analogously, on $V^* \otimes V \otimes V$ and on $V^* \otimes V \otimes V^*$ the elements $(\Omega_{13} + \Omega_{23})(\tau_{V^*} \otimes \id)$ and $(\tau_{V^*} \otimes \id)(\Omega_{13} + \Omega_{23})$ act as zero.
\end{lemma}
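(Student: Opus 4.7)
The plan is to exploit the fact that $\tau_V$ and $\tau_{V^*}$ are $\gl_N$-module endomorphisms of $V \otimes V^*$ and $V^* \otimes V$ respectively, each with image equal to the canonical one-dimensional invariant subspace. First I would verify this: the map $\tau_V$ factors as $V \otimes V^* \xrightarrow{\mathrm{ev}} \C \xrightarrow{\mathrm{coev}} V \otimes V^*$, where $\mathrm{ev}(v_i \otimes v_j^*) = \delta_{ij}$ and $\mathrm{coev}(1) = \xi := \sum_{k \in I} v_k \otimes v_k^*$, both of which are $\gl_N$-equivariant (the pairing is invariant and $\xi$ is the canonical invariant); an immediate direct check with the matrix units $X_{pq}$ also works. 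The same applies to $\tau_{V^*}$.

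The second step is to rewrite $\Omega_{13} + \Omega_{23}$ in a form that isolates the $\gl_N$-action on the first two factors. Choosing the orthonormal basis $\{x_a\} = \{H_i, X_\alpha\}$ so that $\Omega = \sum_a x_a \otimes x_a$, and using that each $x_a$ is primitive, we have
\begin{equation*}
  \Omega_{13} + \Omega_{23} = \sum_a \Delta(x_a) \otimes x_a,
\end{equation*}
where $\Delta(x_a) = x_a \otimes 1 + 1 \otimes x_a$ acts on the first two tensor slots. Now for $(\Omega_{13} + \Omega_{23})(\tau_V \otimes \id)$, the image of $\tau_V \otimes \id$ lies in $\C\xi \otimes W$ (with $W$ the third factor, either $V$ or $V^*$), and $\Delta(x_a) \xi = 0$ for all $a$ since $\xi$ is $\gl_N$-invariant; hence the composition vanishes. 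For $(\tau_V \otimes \id)(\Omega_{13} + \Omega_{23})$, the $\gl_N$-equivariance of $\tau_V$ from the first step lets us commute $\tau_V$ past $\Delta(x_a)$, giving $\Delta(x_a) \tau_V \otimes x_a$; again $\Delta(x_a)$ kills the image of $\tau_V$ (which is $\C\xi$), so this composition also vanishes.

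The remaining three cases of the lemma are handled by the identical argument: the identity of the third factor is irrelevant since $x_a$ acts there independently, and the cases with $\tau_{V^*}$ in place of $\tau_V$ use the analogous invariant $\xi^* = \sum_k v_k^* \otimes v_k \in V^* \otimes V$ together with the same equivariance property. I do not expect any real obstacle here: the whole statement is a formal consequence of the fact that $\tau_V$ and $\tau_{V^*}$ are $\gl_N$-equivariant projections onto the trivial summand, combined with the identity $\Omega_{13} + \Omega_{23} = \sum_a \Delta(x_a) \otimes x_a$. The only minor bookkeeping concern is the sign/pairing convention for root vectors in the definition of $\Omega$, which is already fixed by the orthonormality statement following \eqref{eq:5}.
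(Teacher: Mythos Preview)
Your argument is correct and takes a genuinely different route from the paper. The paper proves the lemma by brute force: it expands $(\Omega_{13}+\Omega_{23})(\tau_V\otimes\id)$ and $(\tau_V\otimes\id)(\Omega_{13}+\Omega_{23})$ on an arbitrary basis vector $v_a\otimes v_b^*\otimes v_c$, splitting the Casimir into its Cartan and root pieces, and tracks all the Kronecker deltas until everything cancels. Your approach replaces this with the structural observation that $\Omega_{13}+\Omega_{23}=\sum_a \Delta(x_a)\otimes x_a$ together with the fact that $\tau_V$ is a $\gl_N$-equivariant projection onto the trivial line $\C\xi$; both vanishing statements then follow in one line from $\Delta(x_a)\xi=0$ and the intertwining $\tau_V\Delta(x_a)=\Delta(x_a)\tau_V$. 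This is shorter, basis-free, and handles the four cases and the two third-factor choices uniformly without repeating any computation. The paper's calculation, by contrast, is completely elementary and needs no preliminary facts about equivariance or invariants, at the cost of being longer and having to be redone (or declared analogous) for each case. Your remark about the sign convention for $\Omega$ is harmless here: your argument never uses the self-duality of the basis, only that $\Omega$ is a sum of elementary tensors with primitive first leg, which holds for any choice.
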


\begin{proof}
  We prove only that the two actions on $V \otimes V^* \otimes V$ are zero. The other assertions follow analogously.
We compute:
\begin{align*}
  & (\Omega_{ij}+ \Omega_{(i+1)j}) (\tau_V \otimes \id) (v_a \otimes
  v^*_b \otimes v_c) \\ \displaybreak[2] &\qquad = (\Omega_{ij}+
  \Omega_{(i+1)j}) \big(\delta_{ab} \sum_{d \in I} v_d \otimes v^*_d
  \otimes v_c\big)\\ \displaybreak[2]
  & \qquad =  \delta_{ab} \sum_{d,e \in I} \big( H_e v_d \otimes v_d^* \otimes H_e v_c + v_d \otimes H_e v_d^* \otimes H_e v_c \big)  + \\
  & \qquad \qquad + \delta_{ab} \sum_{d \in I,\alpha \in \Pi} \big(
  X_\alpha v_d \otimes v_d^* \otimes X_{-\alpha} v_c + v_d \otimes
  X_\alpha v_d^* \otimes X_{-\alpha} v_c \big) \\ \displaybreak[2]
  & \qquad =  \delta_{ab} ( v_c \otimes v_c^* \otimes v_c - v_c \otimes v_c^* \otimes v_c )   +\\
  &\qquad \qquad + \delta_{ab} \sum_{d \in I} \big( (1-\delta_{cd})(
  v_c \otimes v_d^* \otimes v_d) - \delta_{cd}\sum_{e\in I, e \neq c}(
  v_d \otimes v_e^* \otimes v_e )\big)\\ \displaybreak[2]
  &\qquad =
  \delta_{ab} \sum_{d \neq c} (v_c \otimes v_d^* \otimes v_d) -
  \delta_{ab} \sum_{e \neq c} (v_c \otimes v_e^* \otimes v_e) = 0.
\end{align*}

Next compute:
\begin{align*}
&    (\tau_V \otimes \id)(\Omega_{ij}+ \Omega_{(i+1)j}) (v_a \otimes v^*_b \otimes v_c)\\ \displaybreak[2]
& \qquad =  (\tau_V \otimes \id) \biggl[ \sum_{d \in I} \big( H_d v_a \otimes v_b^* \otimes H_d v_c + v_a \otimes H_d v_b^* \otimes H_d v_c \big)  + \\ \displaybreak[2]
& \qquad \qquad +  \sum_{\alpha \in \Pi} \big( X_\alpha v_a \otimes v_b^* \otimes X_{-\alpha} v_c + v_a \otimes X_\alpha v_b^* \otimes X_{-\alpha} v_c \big) \biggr] \\ \displaybreak[2]
& \qquad =  (\tau_V \otimes \id) \big[ \big( \delta_{ac} v_a \otimes v_b^* \otimes v_a - \delta_{bc} v_a \otimes v_b^* \otimes v_b \big)  +\\ 
&\qquad \qquad +  \big( (1-\delta_{ac} )(v_c \otimes v_b^* \otimes v_a ) - \delta_{bc}\sum_{d \in I, d \neq b} (v_a \otimes v_d^* \otimes v_d \big) \big] \\ \displaybreak[2]
& \qquad =  \sum_{e\in I} \delta_{ac}\delta_{ab} v_e \otimes v_e^* \otimes v_a -  \sum_{e \in I} \delta_{bc}\delta_{ab} v_e \otimes v_e^* \otimes v_b +\\
&\qquad \qquad + \sum_{e\in I} \delta_{cb}(1-\delta_{ac})(v_e \otimes v_e^* \otimes v_a) - \delta_{bc} \sum_{d \in I, d \neq b} \delta_{ad} \sum_{e \in I} (v_e \otimes v_e^* \otimes v_d)\\ \displaybreak[2]
& \qquad = \delta_{bc}(1- \delta_{ac})\sum_{e \in I} (v_e \otimes v_e^* \otimes v_a)  -
\delta_{bc}(1- \delta_{ab}) \sum_{e \in I} (v_e \otimes v_e^* \otimes v_a) = 0.\qedhere
  \end{align*}
\end{proof}

\begin{lemma}
  \label{lem:3}
  For all modules $M$ and $(r,t)$-sequences $A$, on $M \otimes
  V^{\otimes A}$ we have $ e_i y_j = y_j e_i$ for $j \neq i,i+1$.
\end{lemma}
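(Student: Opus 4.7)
The plan is to split into two cases based on whether $j<i$ or $j>i+1$, reducing everything to Lemma \ref{lem:4}. First I would dispose of the easy case $j<i$: expanding $y_j=\sum_{0\leq k<j}\Omega_{kj}+\tfrac{N}{2}$, every summand acts only on tensor positions $\{k,j\}\subseteq\{0,1,\ldots,j-1,j\}$, which is disjoint from $\{i,i+1\}$ where $e_i$ acts. So each summand commutes with $e_i$ termwise, and the scalar $N/2$ is trivial.

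For the case $j>i+1$ I would decompose
\[
y_j \;=\; \Bigl(\sum_{\substack{0\leq k<j\\ k\notin\{i,i+1\}}}\Omega_{kj}\Bigr) \;+\; \Omega_{ij}+\Omega_{(i+1)j}\;+\;\tfrac{N}{2},
\]
and observe that every $\Omega_{kj}$ with $k\notin\{i,i+1\}$ acts on two positions both different from $i$ and $i+1$ (since $k\neq i,i+1$ and $j>i+1$), hence commutes with $e_i$. The only nontrivial contribution is $\Omega_{ij}+\Omega_{(i+1)j}$, and I would then show that both compositions $(\Omega_{ij}+\Omega_{(i+1)j})e_i$ and $e_i(\Omega_{ij}+\Omega_{(i+1)j})$ vanish, which implies commutativity.

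The key observation is that everything is now happening on the three tensor positions $i,i+1,j$, with identities tensored on. Since $a_i\neq a_{i+1}$ (which is required for $e_i$ to exist), the pattern $(a_i,a_{i+1},a_j)$ falls into one of the four cases $(1,-1,\pm 1)$ or $(-1,1,\pm 1)$, each corresponding to one of the four tensor products listed in Lemma \ref{lem:4}. In each case Lemma \ref{lem:4} asserts exactly that both orderings of $e_i$ with $\Omega_{ij}+\Omega_{(i+1)j}$ act as zero, so in particular they agree.

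I do not foresee a real obstacle here: the lemma is essentially a bookkeeping statement once Lemma \ref{lem:4} is in hand, and the main point is just to isolate the only pair of summands in $y_j$ that fail to commute with $e_i$ on disjoint-support grounds and reroute their analysis through Lemma \ref{lem:4}.
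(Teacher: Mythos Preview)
Your proposal is correct and follows essentially the same approach as the paper: both treat $j<i$ as trivial by disjoint support, and for $j>i+1$ both isolate $\Omega_{ij}+\Omega_{(i+1)j}$ as the only nontrivial summand and invoke Lemma~\ref{lem:4}. You are slightly more explicit in noting that Lemma~\ref{lem:4} gives vanishing of both compositions (hence commutativity), whereas the paper phrases the reduction as needing only commutativity, but the argument is the same.
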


\begin{proof}
  The case $j<i$ is obvious, and we are left with the case $j> i+1$. It suffices to prove that
  \begin{equation*}
    (\Omega_{ij}+ \Omega_{(i+1)j})e_i = e_i (\Omega_{ij}+ \Omega_{(i+1)j}),
  \end{equation*}
  since $e_i$ commutes with all other summands of $y_j$. This follows immediately from Lemma \ref{lem:4}.
\end{proof}

\begin{lemma}
  \label{lem:5}
  We have
    $[\Omega_{12}, \Omega_{34}]=0$ and $[\Omega_{12}+\Omega_{23},\Omega_{13}]=0$.
\end{lemma}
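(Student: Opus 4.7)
The first identity is immediate: $\Omega_{12}$ and $\Omega_{34}$ lie in commuting copies of $U(\gl_N)^{\otimes 2}$ inside $U(\gl_N)^{\otimes 4}$ (they involve pairwise disjoint tensor factors), so the commutator vanishes factor-by-factor.

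For the second identity, my plan is to reduce to the classical $\gl_N$-invariance of the Casimir tensor. Fix dual bases $\{t_a\}$, $\{t^a\}$ of $\gl_N$ for the trace form, so that $\Omega=\sum_a t_a\otimes t^a$; the symmetry and $\operatorname{ad}$-invariance of $(A|B)=\operatorname{tr}(AB)$ give the fundamental identity
\begin{equation*}
  \sum_a [y,t_a]\otimes t^a + \sum_a t_a\otimes [y,t^a] \;=\; 0 \qquad \text{for all } y\in\gl_N,
\end{equation*}
which is equivalent to $[\Omega,\Delta(y)]=0$ with $\Delta(y)=y\otimes 1+1\otimes y$.

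I then expand the bracket directly. Writing $\Omega_{13}=\sum_a t_a\otimes 1\otimes t^a$ and using that $\Omega_{12}$ commutes with everything in slot $3$ while $\Omega_{23}$ commutes with everything in slot $1$, a short computation gives
\begin{equation*}
  [\Omega_{12}+\Omega_{23},\Omega_{13}] \;=\; \sum_{a,b}\bigl([t_b,t_a]\otimes t^b\otimes t^a \;+\; t_a\otimes t_b\otimes [t^b,t^a]\bigr).
\end{equation*}
The plan is to apply the invariance identity twice. First, with $y=t_a$ in slots $1,2$, rewrite $\sum_b [t_b,t_a]\otimes t^b = \sum_b t_b\otimes [t_a,t^b]$; after relabelling the dummy indices in the first sum so that slot $1$ carries a common factor $t_a$, the whole expression collapses to
\begin{equation*}
  \sum_a t_a\otimes\Bigl(\sum_b [t_b,t^a]\otimes t^b + t_b\otimes [t^b,t^a]\Bigr),
\end{equation*}
and the inner parenthesis is exactly the invariance identity with $y=t^a$ applied in slots $2,3$, hence vanishes.

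The only nuisance will be the bookkeeping of dummy indices across the two uses of invariance; no deeper structure is required. This identity is the classical infinitesimal hexagon relation for the Casimir tensor underlying the Knizhnik--Zamolodchikov connection, so it can alternatively be invoked as a known fact.
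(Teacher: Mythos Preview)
Your proof is correct and takes a genuinely different route from the paper's. The paper proves the second identity by brute force: it writes $\Omega$ in the explicit orthonormal basis $\{H_i, X_\alpha\}$, expands $\Omega_{12}\Omega_{13}+\Omega_{23}\Omega_{13}-\Omega_{13}\Omega_{12}-\Omega_{13}\Omega_{23}$ into roughly a page of sums over indices $a,b,c,d\in I$, splits into the various $H/X$-type terms, and checks that everything cancels. Your argument is basis-free and conceptual: you reduce everything to two applications of the single identity $\sum_a [y,t_a]\otimes t^a + t_a\otimes[y,t^a]=0$ encoding the $\mathrm{ad}$-invariance of the trace form. The relabelling step (swap the dummy indices $a\leftrightarrow b$ after the first use of invariance, so slot~$1$ carries $t_a$ in both summands) is the only place a reader might stumble, but it checks out. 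What your approach buys is clarity and generality: the argument works verbatim for any quadratic Lie algebra and makes transparent that this is the classical infinitesimal hexagon relation for the Casimir tensor. What the paper's approach buys is self-containment---no appeal to invariance properties is needed, at the cost of a longer and more opaque calculation.
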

\begin{proof}
  The first equation is obvious. For the second, we explicitly compute the expression $\Omega_{12}\Omega_{13}+\Omega_{23}\Omega_{13}-\Omega_{13}\Omega_{12}-\Omega_{13}\Omega_{23}$ and we get:
  \begin{align*}
    &\sum_{\substack{a,b,c \in I\\  b \neq c}} \big( H_a X_{bc} \otimes H_a \otimes X_{{cb}} + X_{{bc}}\otimes H_a \otimes H_a X_{{cb}} -\\ 
& \qquad \qquad - H_a X_{bc} \otimes X_{{cb}} \otimes H_a - H_a \otimes X_{bc} \otimes H_a X_{{cb}} \big) + \\ 
    & \qquad + \sum_{a,b,c \in I, b \neq c} \big( X_{bc} H_a \otimes X_{{cb}} \otimes H_a + H_a\otimes X_{bc} \otimes X_{{cb}} H_a -\\ 
& \qquad \qquad - X_{bc} H_a \otimes H_a \otimes X_{{cb}} -X_{bc} \otimes H_a \otimes X_{{cb}}H_a \big) + \\ 
& \qquad + \sum_{\substack{a,b,c,d \in I\\ a \neq b, c \neq d}}\big( X_{ab} X_{cd} \otimes X_{{ba}} \otimes X_{{dc}} + X_{cd} \otimes X_{ab} \otimes X_{{ba}} X_{{dc}} -\\ 
& \qquad \qquad - X_{ab} X_{cd} \otimes X_{{dc}} \otimes X_{{ba}} - X_{ab} \otimes X_{cd} \otimes X_{{ba}} X_{{dc}} \big) \\ \displaybreak[2]
& = \sum_{\substack{a,b,c \in I\\  b \neq c}} \big( [H_a,X_{bc}] \otimes H_a \otimes X_{{cb}} + X_{bc} \otimes H_a \otimes [H_a,X_{{cb}}]- \\ 
& \qquad \qquad - [H_a,X_{bc}] \otimes X_{{cb}} \otimes H_a - H_a \otimes X_{bc} \otimes [H_a,X_{{cb}}]\big) + \\ 
& \qquad + \sum_{\substack{a,b,c,d \in I\\ a \neq b, c \neq d}} \big([X_{ab},X_{cd}] \otimes X_{ba} \otimes X_{dc} - X_{ab} \otimes X_{cd} \otimes [X_{ba},X_{dc}]\big)\\ \displaybreak[2]
& = \sum_{\substack{b,c \in I\\ b \neq c}} \big( (X_{bc} - X_{bc}) \otimes H_a \otimes X_{{cb}} + X_{bc} \otimes H_a \otimes (X_{{cb}}-X_{cb})- \\
& \qquad \qquad - (X_{bc}- X_{bc}) \otimes X_{{cb}} \otimes H_a - H_a \otimes X_{bc} \otimes (X_{cb}-X_{cb})\big) + \\ 
& \qquad+ \sum_{\substack{a,b,c,d \in I\\ a \neq b, c \neq d}} \big( (\delta_{bc}(1-\delta_{ad})X_{ad}-\delta_{ad}(1- \delta_{bc})X_{cb}) \otimes X_{ba} \otimes X_{dc} -\\ 
& \qquad \qquad -  X_{ab} \otimes X_{cd} \otimes (\delta_{ad}(1-\delta_{bc}) X_{bc} - \delta_{bc}(1-\delta_{ad} )X_{da})\big)\\ \displaybreak[2]
& = \sum_{\substack{a,b,d \in I\\ a \neq b, b \neq d, a \neq d}} ( X_{ad} \otimes X_{ba} \otimes X_{db} )-  \sum_{\substack{a,b,c \in I\\ a \neq b, a \neq c, b \neq c}} (X_{cb} \otimes X_{ba} \otimes X_{ac}) -\\ 
& \qquad -   \sum_{\substack{a,b,c \in I\\ a \neq b, a \neq c, b \neq c}} (X_{ab} \otimes X_{ca} \otimes X_{bc}) +   \sum_{\substack{a,b,d \in I \\ a \neq b, b \neq d, a \neq d}}  (X_{ab} \otimes X_{bd} \otimes X_{da}) = 0,
    \end{align*}
    that proves our claim.
\end{proof}

\begin{lemma}
  \label{lem:6}
  For $1 \leq i,j \leq r+t$ we have $y_i y_j=y_j y_i$.
\end{lemma}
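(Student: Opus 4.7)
The plan is to reduce the commutation $[y_i,y_j]=0$ entirely to the two identities of Lemma~\ref{lem:5}, which we have just proved. Expanding the definition, for $i<j$ we have
\begin{equation*}
  [y_i,y_j] = \Bigl[\sum_{0\leq k<i}\Omega_{ki},\sum_{0\leq l<j}\Omega_{lj}\Bigr] = \sum_{k<i}\sum_{l<j}[\Omega_{ki},\Omega_{lj}],
\end{equation*}
since the additive constant $N/2$ contributes nothing. I will classify each bracket $[\Omega_{ki},\Omega_{lj}]$ according to whether the two index pairs $\{k,i\}$ and $\{l,j\}$ are disjoint or overlap.

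First, whenever $\{k,i\}\cap\{l,j\}=\emptyset$ the bracket vanishes by the first half of Lemma~\ref{lem:5} (which gives $[\Omega_{ab},\Omega_{cd}]=0$ whenever the four positions are distinct). Since $i<j$, the only overlap cases are $l=k$ (for any $k<i$) and $l=i$. This reduces the sum to
\begin{equation*}
  [y_i,y_j] = \sum_{0\leq k<i}\bigl([\Omega_{ki},\Omega_{kj}] + [\Omega_{ki},\Omega_{ij}]\bigr),
\end{equation*}
where the $l=i$ terms all land in a single summand because there is a unique such $l$.

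Next, I would show that each summand in parentheses is zero using the second identity from Lemma~\ref{lem:5}, applied with the three positions $k,i,j$. That identity, together with its obvious variants obtained by permuting the three positions, gives
\begin{equation*}
  [\Omega_{ki}+\Omega_{ij},\Omega_{kj}] = 0 \quad\text{and}\quad [\Omega_{ki}+\Omega_{kj},\Omega_{ij}] = 0,
\end{equation*}
hence $[\Omega_{ki},\Omega_{kj}]=[\Omega_{kj},\Omega_{ij}]$ and $[\Omega_{ki},\Omega_{ij}]=-[\Omega_{kj},\Omega_{ij}]$. Adding these two equalities makes the parenthesized summand vanish. Summing over $k$ then yields $[y_i,y_j]=0$.

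There is essentially no obstacle here: all the real work has been absorbed into Lemma~\ref{lem:5}, and the present lemma is pure bookkeeping. The only subtlety is the convention $\Omega_{kj}=\Omega_{jk}$ when $k>j$, which has to be tracked when we specialize Lemma~\ref{lem:5} to indices $k<i<j$ (as opposed to the canonically ordered $1<2<3$ in the statement of that lemma); but this is harmless since $\Omega$ is symmetric in its two positions.
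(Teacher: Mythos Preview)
Your argument is correct and follows essentially the same route as the paper: both reduce $[y_i,y_j]$ to a sum of three-index brackets and kill each one with Lemma~\ref{lem:5}. The only cosmetic difference is that the paper orders the double sum so that each surviving term is already of the form $[\Omega_{ki}+\Omega_{ji},\Omega_{kj}]$, a single instance of Lemma~\ref{lem:5}; you instead obtain $[\Omega_{ki},\Omega_{kj}]+[\Omega_{ki},\Omega_{ij}]$ and invoke two permuted variants, which could be shortened by first rewriting this as $[\Omega_{ki},\Omega_{kj}+\Omega_{ij}]$.
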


\begin{proof}
  Let us assume $i>j$, and compute
  \begin{align*}
      [y_i,y_j]&= \left[ \sum_{0 \leq h < i} \Omega_{hi}, \sum_{0 \leq k < j} \Omega_{kj}\right] =  \sum_{0 \leq k < j} \left[ \sum_{0 \leq h < i} \Omega_{hi},  \Omega_{kj}\right] \\ \displaybreak[2]
&=  \sum_{0 \leq k < j} \left[ \Omega_{ki} + \Omega_{ji},  \Omega_{kj}\right] = 0
    \end{align*}
  using Lemma \ref{lem:5}.
\end{proof}

\begin{lemma}
  \label{lem:7}
  Relations $\dot s_i \dot e_{i+1} \dot e_i = \dot s_{i+1} \dot e_i$,
  $\dot e_i \dot e_{i+1} \dot s_i = \dot e_i \dot s_{i+1}$,
  $\dot e_{i+1} \dot e_i \dot s_{i+1} = \dot e_{i+1} \dot s_i$,
 $\dot s_{i+1} \dot e_i \dot e_{i+1} = \dot s_i \dot e_{i+1}$,
 $\dot e_{i+1} \dot e_i \dot e_{i+1} = \dot e_{i+1} $ and
          $\dot e_i \dot e_{i+1} \dot e_i = \dot e_i$ hold for $1 \leq i \leq r+t-2$.
\end{lemma}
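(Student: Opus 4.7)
All six relations are local, involving only the three consecutive tensor positions $i,i+1,i+2$. Hence it suffices to verify each relation on the appropriate triple tensor product $V^{a_i}\otimes V^{a_{i+1}}\otimes V^{a_{i+2}}$ for every orientation pattern $(a_i,a_{i+1},a_{i+2})\in\{\pm1\}^3$ for which both sides are defined (i.e.\ source and target objects coincide). I would first write out, for each of the six relations, the list of admissible triples: for example $\dot e_{i+1}\dot e_i\dot e_{i+1}=\dot e_{i+1}$ requires $a_{i+1}\neq a_{i+2}$ and $a_i\neq a_{i+1}$, giving four admissible patterns.

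For each admissible pattern I would apply both sides of the relation to a basis vector $w_a^{a_i}\otimes w_b^{a_{i+1}}\otimes w_c^{a_{i+2}}$ (with the convention $w^1=v$, $w^{-1}=v^*$) and compare, using the explicit formulas \eqref{eq:10}, \eqref{eq:11}, \eqref{eq:12} for $\sigma$, $\tau$ and $\hat\tau$. Conceptually every relation is a standard topological identity between oriented tangle diagrams: the two ``snake'' identities $\dot e_{i+1}\dot e_i\dot e_{i+1}=\dot e_{i+1}$ and $\dot e_i\dot e_{i+1}\dot e_i=\dot e_i$ amount to the duality axiom $(\id\otimes\mathrm{ev})\circ(\mathrm{coev}\otimes\id)=\id$, while the mixed relations $\dot s_i\dot e_{i+1}\dot e_i=\dot s_{i+1}\dot e_i$ etc.\ express that sliding a strand over a cup/cap yields a cup/cap with swapped endpoints, which is a direct consequence of the naturality of $\sigma$ with respect to $\tau,\hat\tau$.

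To cut down the casework I would exploit relation (\ref{item:12}\ref{item:13}) (already verified separately), which says $\hat s_j\dot e_j=\dot e_j=\dot e_j\hat s_j$ whenever these compositions make sense, so that any $\hat s_j$ sitting next to a $\dot e_j$ can be absorbed. Together with the left-right symmetry of \eqref{eq:10}--\eqref{eq:12} (reverse the arrows, exchange $\tau\leftrightarrow\hat\tau$), this typically reduces each of the six relations to a single base computation, for instance on $V\otimes V^*\otimes V$ or $V\otimes V^*\otimes V^*$.

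The main obstacle is purely bookkeeping: there are up to eight orientation patterns per relation, and one has to check that the chosen reduction produces the correct version (with hats in the correct positions) on the right-hand side. The actual calculations are all of the same elementary flavour as Lemma~\ref{lem:4}, manipulating sums of Kronecker deltas such as $\sum_d\delta_{ad}\delta_{db}=\delta_{ab}$. I therefore expect the proof to consist of a brief verification of one representative case for each of the six relations, with the remaining orientation patterns handled by the symmetry and reduction arguments above; no new idea beyond Lemma~\ref{lem:4} is required.
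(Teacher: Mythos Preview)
Your proposal is correct and matches the paper's approach: the paper's proof is even terser, simply stating that the relations are easy to check by hand and alternatively follow from the standard (trivial) ribbon Hopf algebra structure of $U(\gl_N)$, which is precisely the topological tangle interpretation you invoke. Your plan of reducing via symmetry and relation (\ref{item:12}\ref{item:13}) is a reasonable way to organise the hand verification the paper leaves implicit.
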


\begin{proof}
  These relations are very easy to check by hand. Alternatively, they are implied by the standard (trivial) ribbon Hopf algebra structure of $U(\gl_N)$.
\end{proof}

\begin{lemma}
  \label{lem:9}
  We have $\sigma_{V,V}= \Omega$ on $V \otimes V$ and $\sigma_{V^*,V^*} = \Omega$ on $V^* \otimes V^*$.
\end{lemma}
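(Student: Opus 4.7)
The plan is to verify both identities by a direct computation on basis vectors, exploiting the fact that the off-diagonal (root) contribution and the Cartan contribution assemble exactly into a swap. I would first write out, using the explicit formulas \eqref{eq:7}, the action of $\Omega=\sum_i H_i\otimes H_i+\sum_{i\neq j}X_{ij}\otimes X_{ji}$ on a basis vector $v_a\otimes v_b\in V\otimes V$: the Cartan part gives $\delta_{ab}\,v_a\otimes v_b$, while in the root part only the single pair $(i,j)=(b,a)$ with $i\neq j$ contributes, producing $(1-\delta_{ab})\,v_b\otimes v_a$. Adding these yields $v_b\otimes v_a$ in both cases $a=b$ and $a\neq b$, which is exactly $\sigma_{V,V}(v_a\otimes v_b)$.

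For the dual statement I would repeat the same calculation with \eqref{eq:8}, keeping careful track of the signs: the Cartan part contributes $(-\delta_{ia})(-\delta_{ib})v_a^*\otimes v_b^*=\delta_{ab}\,v_a^*\otimes v_b^*$ (the two minus signs cancel), and the root part contributes $\sum_{i\neq j}(-\delta_{ia})(-\delta_{jb})v_j^*\otimes v_i^*=(1-\delta_{ab})\,v_b^*\otimes v_a^*$. So the sign flips cancel pairwise and the same identity $\Omega(v_a^*\otimes v_b^*)=v_b^*\otimes v_a^*=\sigma_{V^*,V^*}(v_a^*\otimes v_b^*)$ holds.

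No genuine obstacle arises: the result is essentially the standard statement that the Casimir tensor of $\gl_N$ in the vector representation equals the flip (up to the conventions for the bilinear form chosen in \eqref{eq:5}, which is already normalized so that $\{H_i,X_\alpha\}$ is orthonormal, matching the definition of $\Omega$ in \eqref{eq:6}). The only mild care needed is to separate the diagonal case $a=b$ (where only the Cartan part contributes) from the off-diagonal case $a\neq b$ (where only one root pair contributes) and to verify that both yield the same answer as $\sigma$.
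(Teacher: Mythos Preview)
Your proof is correct and follows essentially the same approach as the paper: a direct computation of $\Omega$ on basis vectors, separating the Cartan and root contributions to obtain $\delta_{ab}\,v_a\otimes v_b+(1-\delta_{ab})\,v_b\otimes v_a=v_b\otimes v_a$. The paper treats the dual case with the single phrase ``Similarly we obtain the second equality,'' whereas you spell out the sign cancellation explicitly, but the argument is the same.
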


\begin{proof}
  We compute
  \begin{align*}
      \Omega (v_i \otimes v_j) & = \sum_{a \in I} H_a v_i \otimes H_a v_j + \sum_{\alpha \in \Pi} X_\alpha v_i \otimes X_{-\alpha} v_j \\ \displaybreak[2]
      &=\delta_{ij} v_i \otimes v_i + (1-\delta_{ij}) v_j \otimes v_i = v_j \otimes v_i.
    \end{align*}
  Similarly we obtain the second equality.
\end{proof}

\begin{lemma}
  \label{lem:8}
  We have $s_i y_i - y_{i+1} s_i = - 1$ and $ s_i y_{i+1} - y_i s_i = 1$.
\end{lemma}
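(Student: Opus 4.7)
The plan is to compute directly, reducing both identities to a single application of Lemma~\ref{lem:9}. Write $y_i = \sum_{0 \leq k < i} \Omega_{ki} + \tfrac{N}{2}$, and recall from \eqref{eq:21} that $s_i \Omega_{jk} = \Omega_{\mathsf s_i(j)\,\mathsf s_i(k)} s_i$. Since $\mathsf s_i$ fixes every index different from $i,i+1$ and swaps those two, for $0 \leq k < i$ we have $s_i\Omega_{ki} = \Omega_{k,i+1}s_i$ and $s_i\Omega_{k,i+1} = \Omega_{ki}s_i$.

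For the first identity, we compute
\begin{align*}
  s_i y_i &= \sum_{0 \leq k < i} \Omega_{k,i+1} s_i + \tfrac{N}{2}\, s_i, \\
  y_{i+1} s_i &= \sum_{0 \leq k < i} \Omega_{k,i+1} s_i + \Omega_{i,i+1} s_i + \tfrac{N}{2}\, s_i,
\end{align*}
so that $s_i y_i - y_{i+1} s_i = -\Omega_{i,i+1} s_i$. By Lemma~\ref{lem:9}, the operator $\Omega_{i,i+1}$ acting on the tensor factors at positions $i,i+1$ coincides with $\sigma_{V^{a_i},V^{a_{i+1}}}$ (note that $a_i=a_{i+1}$ is either $1$ or $-1$, both cases being covered by Lemma~\ref{lem:9}); that is, $\Omega_{i,i+1}=s_i$ as endomorphisms of $M \otimes V^{\otimes A}$. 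Hence $\Omega_{i,i+1}s_i = s_i^2 = 1$, which gives $s_i y_i - y_{i+1} s_i = -1$.

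For the second identity, the same bookkeeping, now including the term $k=i$ in $y_{i+1}$, yields
\begin{equation*}
  s_i y_{i+1} = \sum_{0 \leq k < i} \Omega_{k,i} s_i + \Omega_{i+1,i} s_i + \tfrac{N}{2}\, s_i = y_i s_i + \Omega_{i,i+1} s_i,
\end{equation*}
so $s_i y_{i+1} - y_i s_i = \Omega_{i,i+1} s_i = s_i^2 = 1$, as required.

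There is no real obstacle here: once \eqref{eq:21} is invoked, everything telescopes, and the only non-formal input is the identification $\Omega = \sigma$ on $V \otimes V$ and on $V^* \otimes V^*$ provided by Lemma~\ref{lem:9}. The one point to double-check is that the case $a_i = a_{i+1} = -1$ is handled symmetrically: this is guaranteed by the second half of Lemma~\ref{lem:9}, so no separate calculation is needed.
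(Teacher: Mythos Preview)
Your proof is correct and essentially identical to the paper's: both use \eqref{eq:21} to commute $s_i$ past the $\Omega_{ki}$ terms so that the sums telescope, and then invoke Lemma~\ref{lem:9} to identify $\Omega_{i,i+1}$ with $s_i$, reducing to $s_i^2=1$. The only cosmetic difference is that you carry the $\tfrac{N}{2}$ constants explicitly (they cancel) and spell out the $a_i=a_{i+1}=-1$ case, while the paper leaves both implicit.
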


\begin{proof}
  This is a standard fact (see \cite[Lemma 2.1]{MR1652134}), but we repeat the proof for completeness. We compute, using \eqref{eq:21} and Lemma \ref{lem:9}:
  \begin{align*}
      s_i \sum_{0 \leq k < i} \Omega_{ki} - \sum_{0 \leq k < i+1}
      \Omega_{k(i+1)} s_i& = \sum_{0 \leq k < i} \Omega_{k(i+1)}s_i -
      \sum_{0 \leq k < i+1} \Omega_{k(i+1)} s_i \\ \displaybreak[2]
      &= - \Omega_{i(i+1)}s_i = -s_i^2 = -1
    \end{align*}
and
  \begin{align*}
      \sum_{0 \leq k < i} \Omega_{ki} s_i - s_i \sum_{0 \leq k < i+1}
      \Omega_{k(i+1)} & = \sum_{0 \leq k < i} \Omega_{ki}s_i -
      \sum_{0 \leq k < i} \Omega_{ki} s_i - s_i \Omega_{i(i+1)}\\ \displaybreak[2]
      &= - s_i^2 = -1.\qedhere
    \end{align*}
\end{proof}

\begin{lemma}
  \label{lem:10}
  We have $\tau_{V}= - \Omega$ on $V \otimes V^*$ and $\tau_{V^*} = -\Omega$ on $V^* \otimes V$.
\end{lemma}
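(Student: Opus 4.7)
The plan is a direct computation, closely parallel to Lemma \ref{lem:9}. Both identities reduce to evaluating $\Omega$ on basis vectors of the relevant tensor product and recognizing the result as $-\tau$.

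First I would work out $\Omega(v_i \otimes v_j^*)$ on $V \otimes V^*$. Splitting $\Omega$ into the Cartan piece $\sum_a H_a \otimes H_a$ and the root piece $\sum_{\alpha \in \Pi} X_\alpha \otimes X_{-\alpha}$, and using \eqref{eq:7}--\eqref{eq:8}, the Cartan part contributes
\begin{equation*}
\sum_{a \in I} H_a v_i \otimes H_a v_j^{*} = \sum_{a \in I} \delta_{ai}(-\delta_{aj})\, v_i \otimes v_j^{*} = -\delta_{ij}\, v_i \otimes v_i^{*}.
\end{equation*}
For the root part, writing $\alpha = \epsilon_p - \epsilon_q$ so that $X_\alpha = X_{pq}$ and $X_{-\alpha} = X_{qp}$, the formulas \eqref{eq:7}--\eqref{eq:8} give $X_{pq} v_i \otimes X_{qp} v_j^{*} = -\delta_{qi}\delta_{qj}\, v_p \otimes v_p^{*}$, and summing over $p \neq q$ yields $-\delta_{ij}\sum_{p \neq i} v_p \otimes v_p^{*}$. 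Adding the two contributions gives $-\delta_{ij}\sum_{p \in I} v_p \otimes v_p^{*}$, which is precisely $-\tau_V(v_i \otimes v_j^{*})$ by \eqref{eq:11}.

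The argument on $V^* \otimes V$ is entirely analogous: the Cartan part evaluates to $-\delta_{ij}\, v_i^* \otimes v_j$, and after computing $X_{pq} v_i^* \otimes X_{qp} v_j = -\delta_{ip}\delta_{pj}\, v_q^{*} \otimes v_q$ and summing over $p \neq q$ one gets $-\delta_{ij}\sum_{q \neq i} v_q^{*} \otimes v_q$; adding everything produces $-\delta_{ij}\sum_q v_q^* \otimes v_q = -\tau_{V^*}(v_i^{*} \otimes v_j)$ by \eqref{eq:12}.

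There is no real obstacle here, only some bookkeeping with the signs arising from the action on the dual representation; the signs of the Cartan and root contributions turn out to agree precisely because the dual representation has the opposite sign in both pieces. This gives the claimed equalities $\tau_V = -\Omega$ and $\tau_{V^*} = -\Omega$ as endomorphisms of $V \otimes V^*$ and $V^* \otimes V$ respectively.
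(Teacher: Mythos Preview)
Your proof is correct and follows essentially the same approach as the paper: a direct evaluation of $\Omega$ on basis vectors, splitting into the Cartan and root contributions and recognizing the sum as $-\tau$. The paper's version is slightly terser, but the computation and structure are identical.
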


\begin{proof}
  We compute
  \begin{align*}
      \Omega (v_i \otimes v^*_j) & = \sum_{a \in I} H_a v_i \otimes H_a v^*_j + \sum_{\alpha \in \Pi} X_\alpha v_i \otimes X_{-\alpha} v^*_j \\ \displaybreak[2]
      &= - \delta_{ij} v_i \otimes v^*_i - \delta_{ij}\sum_{k \neq i} v_k \otimes v^*_k = - \delta_{ij} \sum_{k \in I} v_k \otimes v^*_k.
    \end{align*}
  Similarly we obtain the second equality.
\end{proof}

\begin{lemma}
  \label{lem:11}
  We have $\hat s_i y_i - y_{i+1} \hat s_i = \hat e_i$ and $ \hat s_i y_{i+1} - y_i \hat s_i = -\hat e_i$.
\end{lemma}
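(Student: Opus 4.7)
The plan is to mimic the proof of Lemma \ref{lem:8} almost verbatim, the only genuinely new input being the identification of $\Omega_{i(i+1)} \hat s_i$ as $-\hat e_i$ (in place of the scalar $1 = s_i^2$ that arose from $\Omega_{i(i+1)} s_i$ via Lemma \ref{lem:9}).

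First, expanding $y_i = \sum_{0\leq k<i}\Omega_{ki} + N/2$ and $y_{i+1} = \sum_{0\leq k<i}\Omega_{k(i+1)} + \Omega_{i(i+1)} + N/2$ and using the intertwining relation \eqref{eq:21}, one gets
\begin{equation*}
\hat s_i y_i = \sum_{0\leq k<i} \Omega_{k(i+1)} \hat s_i + \frac{N}{2}\hat s_i, \qquad y_{i+1}\hat s_i = \sum_{0\leq k<i}\Omega_{k(i+1)}\hat s_i + \Omega_{i(i+1)}\hat s_i + \frac{N}{2}\hat s_i,
\end{equation*}
so all terms with $k<i$ and the two $N/2$ terms cancel, leaving
\begin{equation*}
\hat s_i y_i - y_{i+1} \hat s_i = -\Omega_{i(i+1)}\hat s_i.
\end{equation*}
The second identity $\hat s_i y_{i+1} - y_i\hat s_i = +\Omega_{i(i+1)}\hat s_i$ follows by the same bookkeeping with $i$ and $i+1$ swapped. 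Both identities in the lemma will therefore be established once I show $\Omega_{i(i+1)}\hat s_i = -\hat e_i$.

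Since all three operators act as the identity on the tensor factors away from positions $i, i+1$, it suffices to verify the local identity $\Omega \circ \sigma_{V^{a_i}, V^{a_{i+1}}} = -\hat\tau_{V^{a_i}}$ as maps $V^{a_i}\otimes V^{a_{i+1}} \to V^{a_{i+1}} \otimes V^{a_i}$ in the mixed case $a_i \neq a_{i+1}$. In the case $(a_i, a_{i+1}) = (1,-1)$, applying $\Omega$ to $\sigma(v_a\otimes v_b^*) = v_b^*\otimes v_a$ via \eqref{eq:7}, \eqref{eq:8}: the diagonal term $\sum_k H_k v_b^*\otimes H_k v_a$ contributes $-\delta_{ab}\, v_a^*\otimes v_a$, while the off-diagonal term $\sum_{\alpha=\epsilon_i-\epsilon_j} X_\alpha v_b^*\otimes X_{-\alpha} v_a$ contributes $-\delta_{ab}\sum_{j\neq a} v_j^*\otimes v_j$, summing to $-\delta_{ab}\sum_j v_j^*\otimes v_j = -\hat\tau_V(v_a\otimes v_b^*)$, as required. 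The case $(a_i,a_{i+1}) = (-1,1)$ is entirely parallel and gives $\Omega\circ\sigma_{V^*,V} = -\hat\tau_{V^*}$.

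There is no real obstacle here: the argument is structurally identical to Lemma \ref{lem:8}, and the one new computation is an immediate variant of Lemma \ref{lem:10} (where $\Omega = -\tau$ on $V\otimes V^*$). The conceptual point is simply that passing from the symmetric crossing $s_i$ to the mixed crossing $\hat s_i$ replaces the scalar $1$ by the cup/cap morphism $\hat e_i$, which is exactly the topological signature of the Casimir acting on a dual pair.
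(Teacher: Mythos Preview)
Your proof is correct and follows essentially the same approach as the paper: expand $y_i, y_{i+1}$, use \eqref{eq:21} to cancel all but the $\Omega_{i(i+1)}$ term, and then identify $\Omega_{i(i+1)}\hat s_i$ with $-\hat e_i$. The only cosmetic difference is that the paper factors this last step through Lemma~\ref{lem:10} as $-\Omega_{i(i+1)}\hat s_i = e_i^{(A')}\hat s_i = \hat e_i$, whereas you compute $\Omega\circ\sigma = -\hat\tau$ directly; these are the same computation.
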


 \begin{proof}
 We compute, using \eqref{eq:21} and Lemma \ref{lem:10}:
   \begin{align*}
       \hat s_i \sum_{0 \leq k < i} \Omega_{ki} - \sum_{0 \leq k < i+1}
       \Omega_{k(i+1)} \hat s_i& = \sum_{0 \leq k < i} \Omega_{k(i+1)} \hat s_i -
       \sum_{0 \leq k < i+1} \Omega_{k(i+1)} \hat s_i \\ \displaybreak[2]
       &= - \Omega_{i(i+1)} \hat s_i = e_i \hat s_i = \hat e_i
     \end{align*}
and
  \begin{align*}
      \sum_{0 \leq k < i} \Omega_{ki} \hat s_i - \hat s_i \sum_{0 \leq k < i+1}
      \Omega_{k(i+1)} & = \sum_{0 \leq k < i} \Omega_{ki} \hat s_i -
      \sum_{0 \leq k < i} \Omega_{ki} \hat s_i - \hat s_i \Omega_{i(i+1)}\\ \displaybreak[2]
      &= \hat s_i e_i = \hat e_i.\qedhere
    \end{align*}
  \end{proof}

\begin{lemma}
  \label{lem:12}
  Let $M$ be a highest weight module with one-dimensional highest weight space. Then for all $1 \leq i < r+t$ we have $e_i (y_i + y_{i+1}) = 0$ and $(y_i + y_{i+1})e_i = 0$.
\end{lemma}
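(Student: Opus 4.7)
My plan is to expand
\[
y_i + y_{i+1} = \sum_{k=0}^{i-1}(\Omega_{ki} + \Omega_{k(i+1)}) + \Omega_{i(i+1)} + N
\]
(grouping $N/2 + N/2 = N$) and verify that each summand, pre- and post-composed with $e_i$, is zero. Since $e_i$ acts nontrivially only on positions $i, i+1$ (where $V^{a_i} \otimes V^{a_{i+1}}$ is either $V \otimes V^*$ or $V^* \otimes V$), Lemma \ref{lem:10} gives $e_i = -\Omega_{i(i+1)}$ on those positions, which is the key identification.

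For the diagonal term $\Omega_{i(i+1)} + N$, a direct manipulation suffices: from $e_i^2 = \omega_0(M)\, e_i = N e_i$ (Lemma \ref{lem:1}) one obtains $\Omega_{i(i+1)}^2 = -N\,\Omega_{i(i+1)}$, so both $e_i\,(\Omega_{i(i+1)} + N)$ and $(\Omega_{i(i+1)} + N)\,e_i$ vanish.

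For each $1 \leq k \leq i-1$, the operator $\Omega_{ki} + \Omega_{k(i+1)}$ touches only positions filled by $V$ or $V^*$, so Lemma \ref{lem:4} (tensored with identities on the remaining positions, and with the roles of $V,V^*$ possibly swapped) immediately yields $(\Omega_{ki} + \Omega_{k(i+1)})\,e_i = 0 = e_i\,(\Omega_{ki} + \Omega_{k(i+1)})$.

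The nontrivial step is the case $k = 0$, where the factor is the module $M$ rather than $V$ or $V^*$, and Lemma \ref{lem:4} does not directly apply; the main obstacle is thus to establish an analogue with $M$ on one side. I would argue this conceptually rather than by expansion in a basis: $e_i$ factors as $\mathrm{coev} \circ \mathrm{ev}$ on positions $i, i+1$, so its image has $(i,i+1)$-entry equal to the $\gl_N$-invariant $\sum_a v_a \otimes v_a^*$ (or $\sum_a v_a^* \otimes v_a$). Every element of $\gl_N$ kills this invariant under the diagonal action, so the Casimir $\Omega_{0i} + \Omega_{0(i+1)} = \Omega_{M,\, V^{a_i} \otimes V^{a_{i+1}}}$ annihilates the image of $e_i$, giving $(\Omega_{0i} + \Omega_{0(i+1)})\,e_i = 0$. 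The reverse composition follows by the dual observation: since $\mathrm{ev}: V \otimes V^* \to \C$ is $\gl_N$-equivariant onto the trivial representation, composing it (on positions $i, i+1$) with $\Omega_{0i} + \Omega_{0(i+1)}$ intertwines it with the Casimir between $M$ and $\C$, which is zero; hence $e_i\,(\Omega_{0i} + \Omega_{0(i+1)}) = 0$.
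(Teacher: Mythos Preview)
Your proof is correct and follows the same overall decomposition as the paper: expand $y_i + y_{i+1}$, kill the diagonal piece via $e_i^2 = N e_i$ and $\Omega_{i(i+1)} = -e_i$, and invoke Lemma~\ref{lem:4} for $1 \le k \le i-1$.

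The genuine difference is in the treatment of the $k=0$ term. The paper fixes the highest weight vector $m \in M$ and computes $e_i(\Omega_{0i}+\Omega_{0(i+1)})(m \otimes v_h \otimes v_k^*)$ and $(\Omega_{0i}+\Omega_{0(i+1)})e_i(m \otimes v_h \otimes v_k^*)$ explicitly, using $X_\alpha m = 0$ for positive roots $\alpha$ to truncate the sum over $\Pi$; vanishing on these generating vectors then propagates by $\gl_N$-equivariance. Your argument instead observes that $e_i$ factors through the trivial $\gl_N$-summand of $V^{a_i}\otimes V^{a_{i+1}}$, so that $\Omega_{0i}+\Omega_{0(i+1)}$ (which is the Casimir coupling $M$ to positions $i,i+1$ jointly) automatically annihilates the image of $e_i$ and is annihilated upon precomposition with $\mathrm{ev}$. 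This is cleaner, requires no basis computation, and in fact never uses the highest weight hypothesis at all: your argument shows that relations~(\ref{item:20}\ref{item:21}--\ref{item:22}) hold for \emph{any} $\gl_N$-module $M$. The paper's explicit computation, on the other hand, has the virtue of being completely self-contained and parallel to the other basis computations in Section~\ref{sec:proofs}.
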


\begin{proof}
  We start expanding the first relation:
  \begin{align*}
      e_i (y_i + y_{i+1}) & = e_i \left( \sum_{0 \leq k < i} ( \Omega_{ki} + \Omega_{k(i+1)} ) + \Omega_{i(i+1)} + N \right)\\ \displaybreak[2]
      &= e_i \left( \sum_{0 \leq k < i} ( \Omega_{ki} + \Omega_{k(i+1)} )\right)  - e_i^2  + N e_i.
    \end{align*}
  We know that $e_i^2 = N e_i$. Moreover, by Lemma \ref{lem:4} we know
  that $e_i (\Omega_{ki}+ \Omega_{k(i+1)})$ acts as $0$ if $k
  >0$. Hence we are left to show that $e_i (\Omega_{0i} +
  \Omega_{0(i+1)})$ acts as $0$. Let $m$ be a non-zero vector in the
  highest weight space of $M$ and suppose that at the place $i$ we
  have $V$ and at the place $i+1$ we have $V^*$ (the other case being
  analogous). We write only the factors $0, i$ and $i+1$ of the
  tensor product, and compute $ e_i (\Omega_{0i} + \Omega_{0(i+1)}) (m \otimes v_h \otimes v^*_k)$:
  \begin{align*}
     & e_i \sum_{a \in I} \big( H_a m \otimes H_a v_h \otimes v_k^* + H_a m
      \otimes v_h \otimes H_a v_k^* \big) + \\ 
&\qquad + e_i \sum_{\alpha \in \Pi^-} \big( X_\alpha m \otimes X_{-\alpha} v_h \otimes v^*_k + X_\alpha m \otimes v_h \otimes X_{-\alpha} v^*_k\big)\\ \displaybreak[2]
&= \delta_{hk} \sum_{b \in I} \big( H_h m \otimes v_b \otimes v_b^* - H_h m
      \otimes v_b \otimes v_b^* \big) + \\ 
&\qquad + e_i  (1-\delta_{hk}) \big( X_{hk}  m \otimes  v_k \otimes v^*_k  - X_{hk} m \otimes v_h \otimes v_h\big)=0,
    \end{align*}
where in the fourth line we have written only the terms from the second line that survive after applying $e_i$.

Analogously, for the second relation, we consider
  \begin{align*}
      (y_i + y_{i+1})e_i & =  \left( \sum_{0 \leq k < i} ( \Omega_{ki} + \Omega_{k(i+1)} ) + \Omega_{i(i+1)} + N \right)e_i\\ \displaybreak[2]
      &= \left( \sum_{0 \leq k < i} ( \Omega_{ki} + \Omega_{k(i+1)} )\right)e_i   - e_i^2  + N e_i
  \end{align*}
and as before we just need to compute $(\Omega_{0i} + \Omega_{0(i+1)}) e_i (m \otimes v_h \otimes v^*_k)$: this can be non-zero only if $h=k$ and in this case we get
  \begin{align*}
     & (\Omega_{0i}+ \Omega_{0(i+1)} ) \left( \sum_{l \in I} m \otimes v_l \otimes v_l^*\right)\\ \displaybreak[2]
&= \sum_{a,l \in I} \big( H_a m \otimes H_a v_l \otimes v_l^* + H_a m
      \otimes v_l \otimes H_a v_l^* \big) + \\ 
&\qquad + \sum_{\alpha \in \Pi^-} \big( X_\alpha m \otimes X_{-\alpha} v_l \otimes v^*_l + X_\alpha m \otimes v_l \otimes X_{-\alpha} v^*_l\big)\\ \displaybreak[2]
&=  \sum_{\substack{b,l \in I\\ b\neq l}} \big( X_{bl} m \otimes  v_b \otimes v^*_l - X_{bl} m \otimes v_l \otimes  v^*_b\big) = 0. \qedhere
  \end{align*}
\end{proof}

\end{document}